\documentclass[12pt]{article}
\usepackage{amsmath,amssymb,amsthm,amsfonts,mathrsfs}
\usepackage{appendix}
\usepackage{array,bm,enumitem,graphics,xcolor,yfonts,colonequals}
\usepackage{stmaryrd}
\usepackage[alphabetic]{amsrefs}
\usepackage[all,cmtip]{xy}
\usepackage{tikz-cd}
\usepackage[colorlinks,anchorcolor=blue,citecolor=blue,linkcolor=blue,urlcolor=blue,bookmarksopen=true]{hyperref}
\usepackage{comment}
\usepackage{mathtools}
\usepackage[margin=1in]{geometry}

\usepackage{titlesec}
\titleformat{\section}
  {\normalfont\large\bf}{\thesection}{1em}{}
\titleformat{\subsection}[runin]
  {\normalfont\normalsize\bf}{\thesubsection}{1em}{}

\usepackage{fancyhdr}
\fancypagestyle{titlepage}
{
	\fancyhf{}

	\fancyfoot[l]{
	\href{https://mathscinet.ams.org/mathscinet/msc/msc2020.html}
		{\emph{2020 Mathematics Subject Classification}}
		14F08, 
        14J10, 
        14J33, 
        14K10  
        \\
		\emph{Keywords}: abelian surfaces, mirror symmetry, symplectic duals
	}
}

\AtBeginDocument{%
	\def\MR#1{}
}

\newcommand{\bC}{\mathbb{C}}    

\newcommand{\bP}{\mathbb{P}}    
\newcommand{\bQ}{\mathbb{Q}}    
\newcommand{\bR}{\mathbb{R}}    
\newcommand{\bZ}{\mathbb{Z}}    

\newcommand{\cB}{\mathcal{B}}   
\newcommand{\cD}{\mathcal{D}}   
\newcommand{\cH}{\mathcal{H}}   
\newcommand{\cK}{\mathcal{K}}   
\newcommand{\cM}{\mathcal{M}}   

\newcommand{\cO}{\mathcal{O}}   
\newcommand{\cQ}{\mathcal{Q}}   
\newcommand{\cX}{\mathcal{X}}

\newcommand{\sD}{\mathscr{D}}   
\newcommand{\sP}{\mathscr{P}}   

\newcommand{\Amp}{\mathrm{Amp}}         

\newcommand{\cpx}{\mathrm{cpx}}
\newcommand{\CY}{\mathrm{CY}}
\newcommand{\Db}{\mathrm{D^b}}  
\newcommand{\GL}{\mathrm{GL}}   
\newcommand{\Gr}{\mathrm{Gr}}   
\newcommand{\Kah}{\mathrm{K\ddot{a}h}}  
\newcommand{\uM}{\mathrm{M}}   
\newcommand{\uO}{\mathrm{O}}            
\newcommand{\PSL}{\mathrm{PSL}}         
\newcommand{\rank}{\mathrm{rank}}
\newcommand{\red}{\mathrm{red}}
\newcommand{\SO}{\mathrm{SO}}           
\newcommand{\Spec}{\mathrm{Spec}}   
\newcommand{\Stab}{\mathrm{Stab}} 
\newcommand{\vol}{\mathrm{vol}}

\newcommand{\Aut}{\operatorname{Aut}}   
\newcommand{\Hom}{\operatorname{Hom}}   

\newcommand{\NS}{\operatorname{NS}}     

\newcommand{\txtwedge}{{\textstyle\bigwedge}}

\newtheorem*{thm*}{Theorem}
\newtheorem*{prop*}{Proposition}
\newtheorem*{cor*}{Corollary}
\newtheorem{thm}{Theorem}[section]
\newtheorem{prop}[thm]{Proposition}
\newtheorem{cor}[thm]{Corollary}
\newtheorem{lem}[thm]{Lemma}

\numberwithin{equation}{section}

\theoremstyle{definition}
\newtheorem{defn}[thm]{Definition}

\newtheorem{deflem}[thm]{Definition-Lemma}
\newtheorem{eg}[thm]{Example}
\newtheorem{ques}[thm]{Question}
\newtheorem{rmk}[thm]{Remark}

\begin{document}
\title{Mirror symmetry for lattice-polarized abelian surfaces}
\author{Yu-Wei Fan
    \and Kuan-Wen Lai}
\date{}

\newcommand{\ContactInfo}{{
\bigskip\footnotesize

\bigskip
\noindent Y.-W.~Fan,
\textsc{Center for Mathematics and Interdisciplinary Sciences, Fudan University \\
Shanghai 200433, China}
\par\nopagebreak
\smallskip
\noindent\textsc{Shanghai Institute for Mathematics and Interdisciplinary Sciences (SIMIS) \\ 
Shanghai 200433, China}
\par\nopagebreak
\noindent\textsc{Email:} \texttt{yuweifanx@gmail.com}

\bigskip
\noindent K.-W.~Lai,
\textsc{Department of Smart Computing and Applied Mathematics \\
Tunghai University \\
No.~1727, Sec.~4, Taiwan~Blvd., Xitun~Dist., Taichung~City 407224, Taiwan}
\par\nopagebreak
\smallskip
\noindent\textsc{National Center for Theoretical Science \\
No.~1, Sec.~4, Roosevelt~Rd., Taipei~City 106319, Taiwan}
\par\nopagebreak
\noindent\textsc{Email:} \texttt{kwlai@thu.edu.tw}
}}

\maketitle
\thispagestyle{titlepage}

\begin{abstract}
Inspired by the Dolgachev--Nikulin--Pinkham mirror symmetry for lattice-polarized K3 surfaces, we study its analogue for abelian surfaces. In this paper, we introduce lattice-polarized abelian surfaces and construct their coarse moduli spaces. We then construct stringy K\"ahler moduli spaces for abelian surfaces and show that these two spaces are naturally identified for mirror pairs. We also introduce a natural involution on stringy K\"ahler moduli spaces which, under mirror symmetry, pairs abelian surfaces and their duals. Finally, we determine conditions for the existence of mirror partners and classify self-mirror abelian surfaces via their N\'eron--Severi lattices.
\end{abstract}

\setcounter{tocdepth}{3}
\tableofcontents

\section{Introduction}

Mirror symmetry predicts deep relationships between the complex geometry of a Calabi--Yau manifold and the symplectic geometry of its mirror, which may be formulated as a correspondence between their complex and stringy K\"ahler moduli spaces. Although a general mathematical definition of the stringy K\"ahler moduli space remains elusive, it can be rigorously formulated for curves and surfaces through Bridgeland stability conditions. In fact, the attempt to construct such a moduli space was among the original motivations for introducing stability conditions on triangulated categories; see \cite{Bri07}*{Section~1.4} and \cite{Bri09Survey}*{Section~2.2}.

For elliptic curves, Bridgeland \cite{Bri09Survey}*{Section~4.1} showed that the manifold of stability conditions provides a natural model for the stringy K\"ahler moduli space. Since an elliptic curve is self-mirror, this space coincides with the complex moduli space, both of which can be identified with $\cH/\PSL(2,\bZ)$, where $\cH\subseteq\bC$ is the upper half-plane. In dimension two, mirror symmetry for lattice-polarized K3~surfaces was originally introduced by Dolgachev \cites{Dol83,Dol96}, Nikulin \cite{Nik79_K3}, and Pinkham \cite{Pin77}. This was later formulated in terms of stability conditions by Bayer and Bridgeland \cite{BB17}*{Section~7}, who constructed stringy K\"ahler moduli spaces in a similar way and showed that they are isomorphic to coarse moduli spaces of lattice-polarized K3~surfaces.

In this paper, we extend this picture to abelian surfaces, starting with the notion of lattice-polarized abelian surfaces and the construction of their moduli spaces. For K3~surfaces, this construction is based on the notion of marked K3~surfaces, namely K3~surfaces~$X$ equipped with lattice isometries
$$\xymatrix{
    H^2(X,\bZ)
    \ar[r]^-\sim
    & E_8(-1)^{\oplus 2}\oplus U^{\oplus 3},
}$$
where $E_8$ is the unique even unimodular lattice of rank~$8$ and
$$U = \begin{pmatrix}
    0 & 1 \\
    1 & 0
\end{pmatrix}$$
is the hyperbolic lattice of rank~$2$. For abelian surfaces, by contrast, the marking is placed on the \emph{first cohomology group}. This reflects the fact that, unlike K3~surfaces, the period of an abelian surface does not uniquely determine the surface itself \cite{Shioda}*{Theorem~II}.

To formulate mirror symmetry for abelian surfaces, we consider two lattices associated with an abelian surface $X$. Let $\NS(X)\subseteq H^2(X,\bZ)$ denote the N\'eron--Severi lattice, and denote the \emph{transcendental lattice} by
$$
    T(X)\colonequals
    \NS(X)^\perp\subseteq H^2(X,\bZ).
$$
The even cohomology group
$$
    H^{\rm ev}(X, \bZ)\colonequals
    H^0(X,\bZ)\oplus H^2(X,\bZ)\oplus H^4(X,\bZ)
$$
carries a natural bilinear form, known as the \emph{Mukai pairing}, which extends the intersection pairing on $H^2(X, \bZ)$ via
$$
    (r, D, s)\cdot (r', D', s')
    = DD' - rs' - sr'.
$$
The \emph{numerical Grothendieck group} of the derived category $\Db(X)$ is then given by
$$
    N(X)\colonequals
    H^0(X,\bZ)\oplus\NS(X)\oplus H^4(X,\bZ)
$$
which becomes a lattice under the Mukai pairing. Note that $N(X)\cong\NS(X)\oplus U$.

\begin{defn}
Two abelian surfaces $X$ and $Y$ are called \emph{mirror partners} if there exist lattice isometries $T(X)\cong N(Y)$ and $N(X)\cong T(Y)$.
\end{defn}

In fact, the existence of an isometry $T(X)\cong N(Y)$ guarantees $N(X)\cong T(Y)$, and vice versa; see Corollary~\ref{cor:numGroth-trans}.

\begin{thm}
\label{thm:mirror-moduli}
For an abelian surface $X$ with a mirror partner $Y$, one can associate two moduli spaces:
\begin{itemize}
    \item the coarse moduli space of $\NS(X)$-polarized abelian surfaces $\cM_\cpx(X)$, and
    \item the stringy K\"ahler moduli space $\cM_\Kah(X)$,
\end{itemize}
such that there are natural isomorphisms
$$
    \cM_\cpx(X)\cong\cM_\Kah(Y)
    \qquad\text{and}\qquad
    \cM_\Kah(X)\cong\cM_\cpx(Y).
$$
\end{thm}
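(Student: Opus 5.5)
Both moduli spaces will be written as arithmetic quotients of period domains attached to one lattice, and then the mirror hypothesis will be used to match the lattices. For $\cM_\cpx(X)$, I fix a marking of $H^1(X,\bZ)\cong\bZ^4$. This induces a marking $H^2(X,\bZ)=\txtwedge^2H^1(X,\bZ)\cong U^{\oplus 3}$, and through it a primitive embedding $\NS(X)\hookrightarrow U^{\oplus3}$ with orthogonal complement $T(X)$. An $\NS(X)$-polarized abelian surface is a marked abelian surface whose period $[\sigma]\in\bP(T(X)\otimes\bC)$ satisfies $\sigma^2=0$ and $\sigma\bar\sigma>0$. The period domain is
$$\Omega_{T(X)}=\{[\sigma]\in\bP(T(X)\otimes\bC):\sigma^2=0,\ \sigma\bar\sigma>0\}.$$
By Shioda's Torelli theorem, the marked surface is determined by its period up to the ambiguity coming from $H^1$ versus $H^2$. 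Concretely, $\SL(4,\bZ)$ acting on $\txtwedge^2\bZ^4$ has image $\SO^+(U^{\oplus3})$ (up to $\pm1$), and a Hodge structure on $H^2$ lifts to exactly two complex structures on $H^1$, namely $X$ and its conjugate or dual. Taking the quotient, $\cM_\cpx(X)=\Omega_{T(X)}^+/\Gamma_{T(X)}$. Here $\Omega^+$ is one connected component and $\Gamma_{T(X)}$ is the image in $\uO(T(X))$ of the stabilizer of $\NS(X)$ in the group of isometries of $H^2$ induced from $\GL(4,\bZ)$ or $\SL(4,\bZ)$. The main obstacle is identifying this group precisely, that is, identifying which isometries of $U^{\oplus3}$ come from $H^1$ and whether they preserve the orientation of the positive 3-planes. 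The tool for this is the exact sequence $1\to\pm1\to\SL(4,\bZ)\to\SO(U^{\oplus3})$, whose image is the spinor-norm-one (orientation-preserving) subgroup $\SO^+$. I would then compute $\Gamma_{T(X)}$ as the subgroup of $\uO(T(X))$ of isometries extending to $\SO^+(U^{\oplus3})$ acting trivially on $\NS(X)$, using Nikulin's discriminant-form criterion.

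For $\cM_\Kah(X)$, I follow Bayer--Bridgeland and take a distinguished connected component $\Stab^\dagger(X)$ of numerical stability conditions on $\Db(X)$. For abelian surfaces there are no spherical objects, and Bridgeland showed that the central-charge map $\Stab^\dagger(X)\to\cP^+(X)\subseteq N(X)\otimes\bC$ is a covering with deck group $\bC$ (or $\widetilde{\GL}^+(2,\bR)$ in the full form). The actual statement is that $\Stab^\dagger(X)$ is a $\widetilde{\GL}^+(2,\bR)$-torsor over $\cP^+(X)$ and is simply connected. Normalizing by the $\bC$-action and projectivizing gives $\Omega_{N(X)}^+=\{[Z]:Z^2=0,\ Z\bar Z>0\}$, which is one component of the period domain of $N(X)$. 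I then define $\cM_\Kah(X)=\Omega^+_{N(X)}/\Gamma_{N(X)}$, where $\Gamma_{N(X)}$ is the image in $\uO(N(X))$ of $\Aut\Db(X)$ preserving $\Stab^\dagger(X)$. This image is the orientation-preserving isometries of $N(X)$ that extend to Hodge isometries of $H^{\rm ev}(X,\bZ)$. For this I use Orlov's description of autoequivalences of abelian varieties, $\Aut\Db(X)\to\mathrm{U}(X\times\hat X)$, together with the analogue of Mukai/Orlov surjectivity onto the spinor-norm-one Hodge isometries of $H^{\rm ev}\cong U^{\oplus4}$.

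Finally, I match the two sides. A mirror isometry $T(X)\cong N(Y)$ gives $\Omega_{T(X)}\cong\Omega_{N(Y)}$, and I need it to intertwine $\Gamma_{T(X)}$ with $\Gamma_{N(Y)}$. Both groups are cut out by the same lattice-theoretic recipe: an isometry must extend to the spin-type group of an even unimodular overlattice, $U^{\oplus3}$ in one case and $U^{\oplus4}=H^{\rm ev}$ in the other, while acting trivially on the complement. Since $N(Y)\cong\NS(Y)\oplus U$ and $T(X)^\perp=\NS(X)$, with $N(X)\cong T(Y)$ by Corollary~\ref{cor:numGroth-trans}, the extension conditions coincide. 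Both reduce to acting trivially on the discriminant group (complements have isomorphic discriminant forms, up to sign) and preserving the orientation of positive 2-planes. The two groups therefore coincide under the isometry, giving $\cM_\cpx(X)\cong\cM_\Kah(Y)$. The second isomorphism follows by swapping the roles of $X$ and $Y$. Naturality means independence of the chosen markings, which follows because changing the marking conjugates by an element of the relevant group.
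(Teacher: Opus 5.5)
Your overall route is the paper's. You write $\cM_\cpx(X)=\cD^+_{\NS(X)}/\Gamma^+_{\NS(X)}$ with $\Gamma^+_{\NS(X)}\cong\SO^+(T(X))^*$, write $\cM_\Kah(Y)$ as an arithmetic quotient of $\cQ^+(Y)$, transport the period domain along $T(X)\cong N(Y)$, and match the groups by the discriminant-form extension criterion.

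\textbf{The gap: the K\"ahler-side group.} You take the image of \emph{all} of $\Aut\Db(Y)$. That image consists of the isometries $g$ of $N(Y)$ that glue with \emph{some} Hodge isometry $t$ of $T(Y)$ to an element of $\SO^+_{\mathrm{Hdg}}(H^{\rm ev}(Y,\bZ))$. For such $g$ you only know $\bar g=\bar t$ on $A_{N(Y)}\cong A_{T(Y)}$, not $\bar g=\mathrm{id}$. So the clause ``while acting trivially on the complement'' in your matching paragraph does not follow from your definition.

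That clause is exactly the Calabi--Yau condition, and it is part of the definition of $\cM_\Kah$ (Definition~\ref{defn:stringy_Kah_mod}, following Bayer--Bridgeland). There one quotients only by $\Aut_\CY\Db(Y)$, modulo $\Aut_0\Db(Y)$ and $\bZ[2]$. With that restriction, two inputs give exactly $\SO^+(N(Y))^*$ (Proposition~\ref{prop:str-kah-moduli}): Yoshioka's theorem that autoequivalences act on $H^{\rm ev}$ through exactly $\SO^+_{\mathrm{Hdg}}$, and the extension criterion. This group depends only on the lattice $N(Y)$, which is what lets you match it with $\SO^+(T(X))^*$.

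Without the restriction, the group depends on the Hodge structure of $Y$. The case $t=-\mathrm{id}$ is harmless: then $-g\in\SO^+(N(Y))^*$, and $\pm g$ act identically on $\bP(N(Y)\otimes\bC)$. But any Hodge isometry of $T(Y)$ acting on $A_{T(Y)}$ by something other than $\pm1$ yields elements outside $\pm\SO^+(N(Y))^*$. So the claimed coincidence of groups cannot be derived from your definition.

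\textbf{Keep the determinant explicit.} Your summary ``acting trivially on the discriminant group and preserving the orientation of positive $2$-planes'' describes $\uO^+(N(Y))^*$, not $\SO^+(N(Y))^*$.
\begin{itemize}
\item On the complex side, $\det=1$ is automatic: markings live on $H^1$, and $\mathrm{SL}(4,\bZ)\to\SO^+(\Lambda)$ is surjective with kernel $\pm1$.
\item On the K\"ahler side, it holds because autoequivalences of abelian surfaces act through $\SO^+$ rather than $\uO^+$, unlike for K3~surfaces (there are no spherical twists).
\end{itemize}
The extra element $(r,D,s)\mapsto(-s,D,-r)$ of $\uO^+(N(Y))^*$ has determinant $-1$ and induces the symplectic-duality involution of Lemma~\ref{lem:double-cover}. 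Dropping $\det=1$ would therefore give the $2{:}1$ quotient of $\cM_\Kah(Y)$ instead of $\cM_\Kah(Y)$ itself. Your phrase ``spin-type group'' does encode $\det=1$; state it explicitly and use it consistently.

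\textbf{Minor.} $\Stab(Y)\to\mathcal{P}^+(Y)$ is the universal cover with deck group $\bZ$ generated by $[2]$, not $\bC$. To land in the quadric $\cQ^+(Y)$, either restrict to reduced stability conditions before dividing by $\bC$, or divide by all of $\widetilde{\GL}_2^+(\bR)$.
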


The stringy K\"ahler moduli space of an abelian surface~$X$ can be realized as the quotient of the space of \emph{complexified K\"ahler classes}
$$
    \cK(X)\colonequals\left\{
        \omega\in\NS(X)\otimes\bC
    \;\middle|\;
        \mathrm{Im}(\omega)\in\Amp(X)
    \right\}
$$
by the action of an arithmetic group. We show that $\cK(X)$ admits a naturally defined involution that descends to an involution on $\cM_\Kah(X)$. Under mirror symmetry, this involution corresponds precisely to taking dual abelian surfaces on the complex moduli.

\begin{thm}
\label{thm:symp-dual}
For an abelian surface $X$, the map
$$
    \iota\colon\cK(X)\longrightarrow\cK(X)
    :\omega\longmapsto\frac{\omega}{-\frac{1}{2}\omega^2}
$$
defines an involution that inverts the K\"ahler volume $\vol(\omega)\colonequals -\frac{1}{2}\omega^2$, meaning that
$$
    \vol(\iota(\omega)) = \vol(\omega)^{-1}.
$$
Moreover, it satisfies the following properties:
\begin{enumerate}[label=\textup{(\alph*)}]
\item\label{symp-dual:invol-str-Kah}
It descends to an involution
$
    \sD\colon\cM_\Kah(X)\longrightarrow\cM_\Kah(X).
$
\item\label{symp-dual:mirror-dual}
Suppose $X$ admits a mirror partner $Y$. Fix an isometry $T(Y)\cong N(X)$ and let
$$
    \mathscr{M}\colon\cM_\cpx(Y)\longrightarrow\cM_{\Kah}(X)
$$
be the induced mirror isomorphism. Then the composition
$$
    \mathscr{M}^{-1}\circ\sD\circ\mathscr{M}
    \colon\cM_\cpx(Y)\longrightarrow\cM_\cpx(Y)
$$
sends an $\NS(Y)$-polarized abelian surface to an $\NS(Y)$-polarized abelian surface such that the underlying abelian surfaces are dual to each other.
\end{enumerate}
\end{thm}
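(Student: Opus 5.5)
We first check the basic properties of $\iota$ on $\cK(X)$. Write $\omega = B + iA$ with $A\in\Amp(X)$. Since $\NS(X)$ has signature $(1,\rho-1)$ by the Hodge index theorem, the positive cone component containing $\Amp(X)$ gives a tube domain. We will show that $\omega^2\neq 0$ on $\cK(X)$ and that $-\omega/\omega^2$ again has imaginary part in $\Amp(X)$. Note that $\Amp(X)$ equals the entire positive cone component for abelian surfaces, since there are no $(-2)$-curves. Hence it suffices to show that $\mathrm{Im}(-1/\omega^2)\cdot\omega$ lies in the positive cone.

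This is the classical statement that the inversion $\omega\mapsto -\omega/\omega^2$ of a tube domain over a Lorentzian cone preserves the domain. We will verify it by the standard computation. For $\lambda=\omega^2$, we have
$$\mathrm{Im}(\omega/\bar\lambda)\,|\lambda|^2=\mathrm{Im}(\omega\bar{\bar\lambda})=\mathrm{Im}(\omega\lambda).$$
Expanding $\mathrm{Im}(\omega\cdot\omega^2)$ in terms of $A$ and $B$ then expresses the claim as a positivity statement for Lorentzian forms. Alternatively, one can reduce to the case $\rho=1$ via the real points of the tube domain: $\mathrm{Im}(-1/z)>0$ for $\mathrm{Im} z>0$.

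With the normalization $\vol(\omega)=-\frac12\omega^2$, we have
$$\iota(\omega)=\omega/\vol(\omega), \qquad \vol(\iota\omega)=\vol(\omega)/\vol(\omega)^2=\vol(\omega)^{-1},$$
and therefore
$$\iota^2(\omega)=\frac{\omega/\vol(\omega)}{\vol(\omega)^{-1}}=\omega.$$
Thus $\iota$ is a holomorphic involution inverting the volume.

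For \ref{symp-dual:invol-str-Kah}, recall that $\cM_\Kah(X)$ is $\cK(X)$, identified with an open subset of the period domain $\{\Omega\in\bP(N(X)\otimes\bC): \Omega^2=0,\ \Omega\bar\Omega>0\}$ via
$$\omega\mapsto\Omega(\omega)=\exp(\omega)=(1,\omega,\tfrac12\omega^2),$$
modulo an arithmetic subgroup $\Gamma\subseteq\uO(N(X))$. In these projective coordinates, the map $\iota$ becomes
$$(1,\omega,\tfrac12\omega^2)\mapsto(1,\omega/\vol,\tfrac12\omega^2/\vol^2)\sim(\vol,\omega,-\tfrac12\omega^2\cdot\tfrac{\omega^2}{2\vol}\cdot\ldots).$$
A direct check shows this is $(-\tfrac12\omega^2,\omega,1)\cdot(\text{scalar})$ up to sign. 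In other words, $\iota$ is induced by the lattice isometry $\delta$ of $N(X)=H^0\oplus\NS\oplus H^4$ swapping $H^0$ and $H^4$ with a sign, e.g. $(r,D,s)\mapsto(-s,D,-r)$ or $(s,-D,r)$, composed with $-1$ if needed. The plan is to identify the correct sign so that $\delta$ preserves the chosen component of the period domain. We then show that $\delta$ normalizes $\Gamma$, which is automatic if $\Gamma$ is defined as a subgroup of $\uO(N(X))$ characterized by intrinsic conditions, such as acting trivially on the discriminant and preserving orientation, both of which $\delta$ respects up to composing with $\pm1$. Normalizing $\Gamma$ makes $\delta$ descend to $\sD$. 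The main obstacle is matching the precise definition of the arithmetic group and the orientation (spinor norm) conventions so that $\delta$ genuinely normalizes $\Gamma$; we would check that $\delta$ acts on $N(X)^\vee/N(X)\cong\NS(X)^\vee/\NS(X)$ as $\pm\mathrm{id}$.

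For \ref{symp-dual:mirror-dual}, the mirror map identifies $\cM_\cpx(Y)$ with the same period domain quotient via $T(Y)\cong N(X)$, with periods coming from $H^{2,0}(Y)$ inside $T(Y)\otimes\bC$. So $\mathscr M^{-1}\sD\mathscr M$ sends the period $\sigma_Y$ to $\delta(\sigma_Y)$. For the dual surface $\hat Y$, there is a canonical Hodge isometry
$$H^2(\hat Y,\bZ)\cong\txtwedge^2H^1(Y,\bZ)^\vee\cong H^2(Y,\bZ)$$
twisted by $-1$ (Shioda/Mukai), i.e. up to sign this is the identity on periods, with markings on $H^1$ transported by duality. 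We will compare the effect of $\delta$ on the marking data with the induced marking of $\hat Y$ via $H^1(\hat Y)=H^1(Y)^\vee$. This is precisely where the non-uniqueness of abelian surfaces with given $H^2$-period enters. Two surfaces with the same $H^2$ period are $Y$ and $\hat Y$ (Shioda), so the nontrivial automorphism must realize the swap. The key step is to verify that $\delta$ corresponds to the element of $\uO(\txtwedge^2 H^1)$ not coming from $\GL(H^1)$, namely a determinant $-1$/orientation-reversing element. Using the earlier moduli construction, where the marking lives on $H^1$ and $\cM_\cpx(Y)$ is a quotient by the image of $\GL$-type groups, this identifies the image point as $\hat Y$ with its induced $\NS(Y)$-polarization.
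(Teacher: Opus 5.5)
Your proposal follows the paper's route. You identify $\iota$ on $\cQ^+(X)\cong\cK(X)$ with the isometry $\delta\colon(r,D,s)\mapsto(-s,D,-r)$ of $N(X)$. This isometry is trivial on $\NS(X)$ and on the discriminant group, lies in $\uO^+(N(X))$ with determinant $-1$, and hence normalizes $\SO^+(N(X))^*$; in fact every isometry does, so your ``main obstacle'' is not one. You then transport $\delta$ through $T(Y)\cong N(X)$ to a determinant $-1$ Hodge isometry and invoke Shioda to obtain the dual, exactly as the paper does.

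The only flaw is your direct tube-domain computation, which is garbled as written. With $\omega=B+iA$, the vector to check is $-\mathrm{Im}(\omega\overline{\omega^2})=(A^2-B^2)A+2(A\cdot B)B$, whose square is $A^2|\omega^2|^2$, not $\mathrm{Im}(\omega\cdot\omega^2)$. This step is redundant anyway, since $\delta$ already preserves $\cQ^+(X)$.
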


In light of this theorem, we define:

\begin{defn}
Let $X$ be an abelian surface. For every complexified K\"ahler class $\omega\in\cK(X)$, we call the class $-2\omega/\omega^2\in\cK(X)$ the \emph{symplectic dual} of $\omega$.
\end{defn}

\begin{ques}
In complex geometry, an abelian variety $A$ and its dual $\widehat{A}$ are derived equivalent, that is, $\Db(A)\cong\Db(\widehat{A})$, via the Fourier--Mukai transform induced by the Poincar\'e bundle on $A\times\widehat{A}$ \cite{Muk81}. On the symplectic side, are the Fukaya categories of $X$ with respect to $\omega$ and $-2\omega/\omega^2$ related through certain Lagrangian correspondences?
\end{ques}

The following result characterizes when an abelian surface admits a mirror partner:

\begin{thm}
\label{thm:crit_mirror-partner}
An abelian surface $X$ with Picard number $\rho(X)$ admits a mirror partner if and only if
\begin{itemize}
    \item $\rho(X)\leq2$, or
    \item $\rho(X)=3$ and $\NS(X)\cong\bZ(-2n)\oplus U$ for some positive integer~$n$.
\end{itemize}
\end{thm}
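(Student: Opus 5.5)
The plan is to reduce the existence of a mirror partner to a lattice-embedding problem in $\Lambda\colonequals H^2(X,\bZ)\cong U^{\oplus3}$. By Corollary~\ref{cor:numGroth-trans}, it is enough to find an abelian surface $Y$ with $T(Y)\cong N(X)=\NS(X)\oplus U$. Put $r=\rho(X)$, so $N(X)$ has rank $r+2$ and signature $(2,r)$, because $\NS(X)$ is hyperbolic of signature $(1,r-1)$. A transcendental lattice of an abelian surface has signature $(2,\rho(Y)-\ldots)$; more precisely, it is a primitive sublattice of $U^{\oplus 3}$ of signature $(2,4-\rho(Y))$. Conversely, by the surjectivity of the period map for abelian surfaces (Shioda), every primitive sublattice $T\subseteq U^{\oplus3}$ of signature $(2,k)$ with $k\le 3$ occurs as $T(Y)$ for some $Y$. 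To get this, choose a very general period $\sigma\in T\otimes\bC$ with $\sigma^2=0$ and $\sigma\bar\sigma>0$; then $\NS(Y)=\sigma^\perp\cap\Lambda=T^\perp$. The problem thus becomes: \emph{$X$ has a mirror partner if and only if $\NS(X)\oplus U$ embeds primitively into $U^{\oplus3}$.} The signature requirement $r\le 4$ is automatic, since $\rho\le4$.

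For sufficiency, I treat the cases separately. If $r\le 2$, write $N(X)=\NS(X)\oplus U$. It suffices to embed $\NS(X)$ primitively into $U^{\oplus2}$. By Nikulin's existence criterion, an even lattice of rank $r$ embeds primitively into an even unimodular lattice of rank $n$ whenever $r\le n/2$; here $r\le 2=4/2$, so this works. Alternatively, one can argue directly using $\NS(X)\hookrightarrow H^2(X,\bZ)=U^{\oplus3}$ together with the rank bound. If $r=3$ and $\NS(X)\cong\bZ(-2n)\oplus U$, then $N(X)\cong\bZ(-2n)\oplus U^{\oplus2}$. This embeds primitively into $U^{\oplus 3}$ by mapping $\bZ(-2n)$ to the vector $e-nf$ in the third copy of $U$, which is primitive with square $-2n$.

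For necessity, suppose $N(X)$ embeds primitively into $U^{\oplus3}$. Since $U^{\oplus 3}$ is unimodular, the orthogonal complement $T(Y)^\perp=\NS(Y)$ has discriminant form isomorphic to $-q_{N(X)}=-q_{\NS(X)}$, and its rank is $4-r$. If $r=4$, the complement has rank $0$, so $\NS(X)\oplus U$ would be unimodular of rank $6$ and signature $(2,4)$. That is impossible: signature $2-4=-2$ is not divisible by $8$, as an even unimodular lattice requires. If $r=3$, the complement $\NS(Y)$ has rank $1$, so it is $\bZ(2n)$ with $n>0$, since it is positive definite. Hence $\NS(X)$ has discriminant group $\bZ/2n$ with discriminant form $-\frac1{2n}$. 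It remains to show that an even lattice of signature $(1,2)$ with this discriminant form must be $\bZ(-2n)\oplus U$. For this I would use that $\bZ(-2n)\oplus U$ has the same signature and discriminant form, and that Nikulin's uniqueness theorem applies because $\mathrm{rank}\ge 2+\ell(A)=3$ and the lattice is indefinite. Equivalently, one can use that $\NS(X)$ is the orthogonal complement of $\bZ(2n)$ in $U^{\oplus 2}\oplus\ldots$.

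I expect the main obstacle to be the precise applicability of Nikulin's uniqueness criterion in the rank-$3$ case, since the length of the discriminant group can be $1$, which makes the bound tight. The fallback is a direct argument: a primitive vector $v$ with $v^2=2n$ in $U^{\oplus3}$ is unique up to isometry (Eichler's criterion, since $U^{\oplus 3}$ contains two copies of $U$). One may therefore take $v=e+nf$ in one copy of $U$, and its complement is visibly $\bZ(-2n)\oplus U^{\oplus2}$. Applying the same reasoning once more identifies $\NS(X)$.
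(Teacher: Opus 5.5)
Your proof is correct, but your necessity argument takes a different route from the paper's.

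\textbf{The paper's route.} It works entirely through Definition-Lemma~\ref{deflem:mirror}. A mirror partner gives $\NS(X)^{\perp}\cong T(X)\cong\NS(Y)\oplus U$, so $\NS(X)^\perp$ contains a copy of $U$. All embeddings $U\hookrightarrow U^{\oplus3}$ are $\uO(U^{\oplus3})$-equivalent, so this copy can be taken to be a standard summand. That places $\NS(X)$ primitively inside $U^{\oplus2}$. The signature $(2,2)$ of $U^{\oplus2}$ then forces $\rho\le3$. For $\rho=3$, the complement of $\NS(X)$ in $U^{\oplus2}$ is spanned by a primitive vector of norm $2n$, which can be moved to $e+nf$ in one summand. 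This yields $\NS(X)\cong\bZ(-2n)\oplus U$ directly.

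\textbf{Your route.} You work on the $Y$-side, with $N(X)\cong T(Y)\subseteq U^{\oplus3}$. You read off the discriminant form of $\NS(X)$ from the rank-one complement $\NS(Y)\cong\bZ(2n)$. You then invoke Nikulin's theorem that an even indefinite lattice with $\rank\ge 2+\ell(A)$ is unique in its genus. That bound is non-strict, so your worry about tightness at $\rank=3$, $\ell(A)=1$ is unfounded and the argument goes through.

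\textbf{Sufficiency.} Your sufficiency half is essentially the paper's. Both embed $\NS(X)$ primitively into $U^{\oplus2}$ (or embed $\bZ(-2n)\oplus U^{\oplus 2}$ explicitly), add a $U$, and realize the result as $T(Y)$ for a very general period.

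\textbf{What each buys.} The paper's route needs only transitivity statements, for $U$ in $U^{\oplus3}$ and for primitive vectors in $U^{\oplus2}$. It also packages everything into the symmetric Condition~$\diamondsuit$, which is reused for the moduli construction. Yours is more mechanical and anticipates the discriminant-form criterion of Lemma~\ref{lem:mirror-pair}, at the cost of citing the heavier genus-uniqueness theorem.

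\textbf{Two small slips.}
\begin{enumerate}
\item The constraint coming out of your reduction is $r\le3$, not that ``$r\le4$ is automatic.'' The negative index $r$ of $N(X)$ must fit inside the negative index $3$ of $U^{\oplus3}$. This plain signature count already excludes $\rho=4$, with no need for your mod-$8$ parity argument.
\item The last step of your fallback, ``applying the same reasoning once more,'' is not the same reasoning. It is really a cancellation of $U$ from $\NS(X)\oplus U\cong\bZ(-2n)\oplus U^{\oplus2}$, which needs more than Eichler transitivity on single vectors. You do not need the fallback, though.
\end{enumerate}
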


For a pair of mirror partners, either both surfaces have Picard number~$2$, or one surface has Picard number~$1$ and the other has Picard number~$3$. In the latter case, each surface uniquely determines the N\'eron--Severi group (and hence the numerical Grothendieck group and transcendental lattice) of its mirror, as shown in the following table:

\begin{center}
\renewcommand{\arraystretch}{1.2}
\begin{tabular}{|c||c|c|c|}
\hline
    Mirror partners
    & $\NS(\;\cdot\;)$
    & $N(\;\cdot\;)$
    & $T(\;\cdot\;)$ \\
\hline
\hline
    $X$
    & $\bZ(2n)$
    & $\bZ(2n)\oplus U$
    & $\bZ(-2n)\oplus U^{2}$ \\
\hline
    $Y$
    & $\bZ(-2n)\oplus U$
    & $\bZ(-2n)\oplus U^{2}$
    & $\bZ(2n)\oplus U$ \\
\hline
\end{tabular}
\end{center}

The case of Picard number~$2$ is more subtle. In this setting, abelian surfaces with non-isomorphic N\'eron--Severi lattices may share the same mirror. In fact, two abelian surfaces have the same mirror if and only if their N\'eron--Severi lattices are stably equivalent, or equivalently, have isomorphic discriminant forms; see Proposition~\ref{prop:stably-equiv}. Moreover, an abelian surface $X$ of Picard number~$2$ may be \emph{self-mirror}, namely $T(X)\cong N(X)$. A basic example is the product of two non-isogenous elliptic curves. The following result gives a criterion for identifying self-mirror abelian surfaces in terms of discriminant forms.

\begin{thm}
\label{thm:self-mirror}
Let $X$ be an abelian surface with Picard number $2$. Then $X$ is self-mirror if and only if the discriminant form $(A,q)$ of $\NS(X)$ satisfies the following conditions:
\begin{enumerate}[label=\textup{(\alph*)}]
    \item\label{self-mirror:odd-p}
    For any prime $p\equiv3\pmod{4}$, the $p$-Sylow subgroup $A_p\subseteq A$ has the form
    $$
        A_p\cong(\bZ/p^k\bZ)\oplus(\bZ/p^k\bZ)
    $$
    for some integer $k$.
    \item The $2$-Sylow subgroup $A_2\subseteq A$ equipped with the restricted form $q_2 \colonequals q\mid_{A_2}$ is isomorphic to one of the following:
    \begin{enumerate}[label=\textup{(\roman*)}]
        \item $A_2\cong(\bZ/2^{k}\bZ)\oplus(\bZ/2^{k}\bZ)$ for some integer $k$, and there exist odd integers $\theta_1, \theta_2$ with $\theta_1 + \theta_2\equiv 0\pmod{4}$ such that
        $$
            q_2(\overline{a}_1,\overline{a}_2)
            = \frac{\theta_1 a_1^2}{2^{k}}
                + \frac{\theta_2 a_2^2}{2^{k}}
            \qquad\text{for each}\qquad
            (\overline{a}_1, \overline{a}_2)\in A_2.
        $$
        Here $a_1, a_2$ are integers representing $\overline{a}_1, \overline{a}_2\in\bZ/2^{k}\bZ$.
        \item $A_2\cong(\bZ/2^{k}\bZ)\oplus(\bZ/2^{k+1}\bZ)$ for some integer $k$, and there exist odd integers $\theta_1, \theta_2$ with $\theta_1 + \theta_2\equiv 0\pmod{4}$ such that
        $$
            q_2(\overline{a}_1,\overline{a}_2)
            = \frac{\theta_1 a_1^2}{2^{k}}
                + \frac{\theta_2 a_2^2}{2^{k+1}}
             \qquad\text{for each}\qquad
            (\overline{a}_1, \overline{a}_2)\in A_2.
        $$
        Here $a_1, a_2$ are integers representing $\overline{a}_1\in\bZ/2^{k}\bZ$ and $\overline{a}_2\in\bZ/2^{k+1}\bZ$.
        \item $A_2\cong(\bZ/2^{k}\bZ)\oplus(\bZ/2^{k}\bZ)$ for some integer $k$, and there exists an odd integer $\theta$ such that
        $$
            q_2(\overline{a}_1,\overline{a}_2)
            = \frac{\theta a_1a_2}{2^{k-1}}
            \qquad\text{for each}\qquad
            (\overline{a}_1, \overline{a}_2)\in A_2.
        $$
        Here $a_1, a_2$ are integers representing $\overline{a}_1, \overline{a}_2\in\bZ/2^{k}\bZ$.
        \item $A_2\cong(\bZ/2^{k}\bZ)\oplus(\bZ/2^{k}\bZ)$ for some integer $k$, and there exists an odd integer $\theta$ such that
        $$
            q_2(\overline{a}_1,\overline{a}_2)
            = \frac{\theta(a_1^2+a_1a_2+a_2^2)}{2^{k-1}}
             \qquad\text{for each}\qquad
            (\overline{a}_1, \overline{a}_2)\in A_2.
        $$
        Here $a_1, a_2$ are integers representing $\overline{a}_1, \overline{a}_2\in\bZ/2^{k}\bZ$.
    \end{enumerate}
\end{enumerate}
\end{thm}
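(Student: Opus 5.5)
The plan is to reduce self-mirror to an isomorphism of discriminant forms, and then classify that condition prime by prime. Since $\rho(X)=2$, the lattice $\NS(X)$ is even of signature $(1,1)$ and rank $2$. Because $H^2(X,\bZ)\cong U^{\oplus3}$ is even unimodular, $T(X)=\NS(X)^\perp$ is even of signature $(2,2)$ and its discriminant form is $(A,-q)$ (Nikulin). On the other hand, $N(X)\cong\NS(X)\oplus U$ is even of signature $(2,2)$ with discriminant form $(A,q)$. Both lattices are indefinite of rank $4$, and the length $\ell(A)\le\rank\NS(X)=2$, so $\rank\ge\ell(A)+2$. Nikulin's uniqueness theorem (the same tool presumably behind Corollary~\ref{cor:numGroth-trans} and Proposition~\ref{prop:stably-equiv}) then says an even lattice in this genus is determined by its signature and discriminant form. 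Hence
$$
    X \text{ is self-mirror}\iff T(X)\cong N(X)\iff (A,q)\cong(A,-q).
$$
This last condition splits over the Sylow decomposition $A=\bigoplus_p A_p$, so it suffices to decide when $q_p\cong -q_p$ for each prime $p$. Throughout we only need to treat groups $A_p$ with $\ell(A_p)\le 2$.

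For odd $p$, I would write $q_p$ as an orthogonal sum of homogeneous pieces $\bigoplus_k (\bZ/p^k)^{r_k}$. Each piece is classified by its rank and the Legendre symbol of its determinant. Negating the form multiplies that symbol by $\left(\frac{-1}{p}\right)^{r_k}$. If $p\equiv1\pmod4$ this factor is always $1$, so there is no condition. If $p\equiv3\pmod 4$ we need every $r_k$ to be even. Combined with $\ell(A_p)\le2$, this forces $A_p\cong(\bZ/p^k)^{2}$ (possibly with $k=0$), which gives \ref{self-mirror:odd-p}. Conversely, when $r_k=2$ the negated form has the same determinant class, so $q_p\cong -q_p$.

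For $p=2$, I would use the normal forms of finite quadratic forms on $2$-groups (Kawauchi--Kojima, Miranda--Morrison). When $\ell\le 2$, $q_2$ is one of the following:
\begin{itemize}
    \item a cyclic form $\langle\theta/2^k\rangle$;
    \item a sum $\langle\theta_1/2^{k_1}\rangle\oplus\langle\theta_2/2^{k_2}\rangle$;
    \item $u_k$, given by $a_1a_2/2^{k-1}$;
    \item $v_k$, given by $(a_1^2+a_1a_2+a_2^2)/2^{k-1}$, up to an odd scalar.
\end{itemize}
The cases then go as follows.
\begin{itemize}
    \item The forms $u_k$ and $v_k$ are isomorphic to their negatives. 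For $u_k$ negate one coordinate; for $v_k$ note that $-v_k\cong v_k$ by the classification, since $v_k$ is determined by $k$. This yields (iii) and (iv).
    \item A cyclic form is never isomorphic to its negative: this would require $-\theta\equiv\theta c^2 \pmod{2^{k+1}}$ with $c$ odd, and odd squares are $\equiv1\pmod 8$.
    \item For two cyclic summands I would compare the invariants, namely Gauss sums or equivalently the Miranda--Morrison invariants of each scale and the relations linking adjacent scales. If $|k_1-k_2|\ge2$, the two summands are separately invariant, so each would have to be self-negating, which is impossible by the previous point. If $|k_1-k_2|\le1$, the relations among adjacent normal forms let $-q_2\cong q_2$ happen exactly when $\theta_1+\theta_2\equiv0\pmod 4$. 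This yields (i) and (ii).
\end{itemize}
In every case I would also verify the converse by exhibiting an explicit isometry $q_2\to -q_2$. For instance, in case (i) with $\theta_1+\theta_2\equiv 0 \pmod 4$, I would find an explicit change of basis sending $(\theta_1,\theta_2)$ to $(-\theta_1,-\theta_2)$.

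The main obstacle is this last $2$-adic step. The normal forms for $2$-groups are not unique, because of relations such as $\langle\theta_1/2^k\rangle\oplus\langle\theta_2/2^k\rangle\cong\langle\theta_1'/2^k\rangle\oplus\langle\theta_2'/2^k\rangle$ and the mixing of adjacent scales. So the analysis must carefully use a complete invariant, such as the Gauss-sum values $\gamma_j(q)=\sum_{x\in A}e^{2\pi i\,2^jq(x)}$ for all $j$, rather than naive coefficient comparison. I would compute these Gauss sums for $q$ and $-q$ (they are complex conjugates of each other) and require them to be real for all $j$. That reduces the problem to elementary congruences on $\theta_1,\theta_2$ modulo $8$, from which the condition $\theta_1+\theta_2\equiv0\pmod4$ should fall out.
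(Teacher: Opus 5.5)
Your reduction and your odd-$p$ analysis are correct, and they take a different route from the paper. You get $T(X)\cong N(X)\iff(A,q)\cong(A,-q)$ directly from Nikulin's uniqueness in the genus: both lattices are even and indefinite of signature $(2,2)$, with $4\ge\ell(A)+2$. The paper instead uses the gluing argument of Lemma~\ref{lem:mirror-pair}. Your Jordan invariants (rank and Legendre symbol of the determinant on each scale) settle odd $p$ in both directions at once, replacing the paper's Cauchy--Davenport and Hensel-lifting construction. The cyclic, $u_k$ and $v_k$ cases at $p=2$ are also fine.

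The gap is in the diagonal $2$-adic case, which is exactly where (i) and (ii) come from. First, your claim that for $|k_1-k_2|\ge2$ the two summands are separately invariant is false when $|k_1-k_2|=2$. Take $\bZ/2\oplus\bZ/8$ with $q=a_1^2/2+a_2^2/8$. The elements $(1,4)$ and $(1,1)$ are orthogonal, have orders $2$ and $8$, generate the group, and have $q$-values $1/2$ and $5/8$. Hence $\langle 1/2\rangle\oplus\langle 1/8\rangle\cong\langle 1/2\rangle\oplus\langle 5/8\rangle$, even though $\langle 1/8\rangle\not\cong\langle 5/8\rangle$, since $5$ is not an odd square mod $16$. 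Your conclusion survives, but it needs another argument. For an anti-isometry with $f(0,1)=(u_1,u_2)$, one gets $\theta_2(u_2^2+1)+2^{k_2-k_1}\theta_1u_1^2\equiv0\pmod{2^{k_2+1}}$. Reducing mod $4$ forces $u_2^2\equiv-1\pmod4$, which is impossible; this is how the paper argues in Lemma~\ref{lem:p=2_diag}. In your own language, $\gamma_{k_1+1}(q_2)$ is a nonzero real multiple of $e^{\pi i\theta_2/4}$, hence not real.

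Second, for $|k_1-k_2|\le1$ you only announce the answer. Necessity does come out of your method, since $\gamma_0(q_2)$ is a nonzero real multiple of $e^{\pi i(\theta_1+\theta_2)/4}$. But reality of all $\gamma_j$ is only a necessary condition for $q_2\cong-q_2$. For the ``if'' direction of (i) and (ii) you need one of two things:
\begin{itemize}
\item a precisely cited theorem that these Gauss sums, together with the group, form a complete isometry invariant; or
\item an actual anti-automorphism for every $k$.
\end{itemize}
The proposal supplies neither. This is the substantive content of Lemmas~\ref{lem:p=2_diag_k2=k1} and~\ref{lem:p=2_diag}. There the paper gives explicit solutions for $k=1,2$, then a Hensel-type induction lifting $(u_{ij})$ from modulus $2^{k+1}$ to $2^{k+2}$; this works because $2k\ge k+2$ once $k\ge2$.

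One way to finish inside your framework is to pass to $2$-adic lattices. The form $q_2$ is the discriminant form of the lattice $\langle 2^{k_1}\theta_1,2^{k_2}\theta_2\rangle$ over $\bZ_2$. For $k_2-k_1\le1$, negation preserves its determinant and the signs of its constituents, and negates the compartment oddity $\theta_1+\theta_2$. By the $2$-adic (Conway--Sloane) classification, the lattice is therefore isometric to its negative when $\theta_1+\theta_2\equiv0\pmod4$, and so are the discriminant forms.
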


For principally polarized abelian surfaces, the conditions for being self-mirror reduce to a particularly simple form:

\begin{cor}
\label{cor:self-mirror_prinAbS}
A principally polarized abelian surface $X$ of Picard number~$2$ is self-mirror if and only if the discriminant of $\NS(X)$ is not divisible by $16$ nor any prime $p\equiv3\pmod{4}$.
\end{cor}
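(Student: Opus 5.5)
Since $X$ is principally polarized of Picard number~$2$, $\NS(X)$ is an even hyperbolic lattice of rank~$2$ containing a primitive class $L$ with $L^2=2$. My plan is to write $\NS(X)$ explicitly, read off its discriminant form, and then check the conditions of Theorem~\ref{thm:self-mirror} prime by prime. Extend $L$ to a basis $\{L,M\}$ of $\NS(X)$ and set $L\cdot M=b$, $M^2=2c$. Replacing $M$ by $M-tL$ changes $b$ to $b-2t$, so we may assume $b\in\{0,1\}$. The Gram matrix is then
$$
\begin{pmatrix} 2 & 0\\ 0 & 2c\end{pmatrix}
\qquad\text{or}\qquad
\begin{pmatrix} 2 & 1\\ 1 & 2c\end{pmatrix},
$$
with determinant $4c$ or $4c-1$ respectively. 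Hyperbolicity forces $c<0$. Put $d=|\operatorname{disc}\NS(X)|$, so $d=4|c|$ or $d=4|c|+1$.

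\textbf{Case $b=1$.} Here $d\equiv 1\pmod 4$, so $d$ is odd and is never divisible by~$16$. The discriminant group $A$ is cyclic of order $d$, since it is a quotient of $\bZ^2$ by a matrix with a unimodular entry. Hence $A_2=0$, and the $2$-adic condition holds trivially, because case (i) with $k=0$ is allowed. For $p\equiv3\pmod4$, condition~\ref{self-mirror:odd-p} asks for $A_p\cong(\bZ/p^k)^2$. A cyclic $A_p$ has this form only when it is trivial, so the condition is exactly $p\nmid d$.

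\textbf{Case $b=0$.} Here $\NS(X)=\langle 2\rangle\oplus\langle 2c\rangle$ and $A=\bZ/2\oplus\bZ/2|c|$, which is again cyclic at odd primes. The odd-prime condition therefore again reads $p\nmid d$ for $p\equiv3\pmod 4$. For the $2$-part, write $|c|=2^{m}u$ with $u$ odd. Then
$$
A_2=\bZ/2\oplus\bZ/2^{m+1},\qquad q_2=\tfrac{1}{2}a_1^2+\tfrac{\theta_2}{2^{m+1}}a_2^2,\quad \theta_2\equiv -u^{-1}\cdot(\text{unit}) .
$$
This $A_2$ is of the shape required by (i) or (ii) only when $m\in\{0,1\}$; both (iii) and (iv) need two cyclic factors of equal order at least~$2$ with even-type form, so they apply only when $m=0$. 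Since $d=4|c|$, the condition $m\le1$ is the same as $16\nmid d$.

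It remains to check that the congruence $\theta_1+\theta_2\equiv0\pmod 4$ can be met when $m\in\{0,1\}$. The obstacle is that $\theta_1=1$ is forced by the $\langle 2\rangle$ summand, while $\theta_2$ comes from $-u$ and might have the wrong residue mod~$4$. I would resolve this using the relations among discriminant forms on $2$-groups: on $\bZ/2$ the values $\tfrac12$ and $\tfrac52$ agree mod $2\bZ$, and more generally a coefficient $\theta/2^k$ is only defined modulo $2\bZ$ up to squares of units. This lets me change $\theta_1$ by $4$, or multiply $\theta_2$ by a square unit, and so reach the required residue class. If that is not enough, I would use the standard isomorphisms $\langle\tfrac{\theta_1}{2}\rangle\oplus\langle\tfrac{\theta_2}{2}\rangle\cong$ (iii)/(iv) when $\theta_1+\theta_2\equiv 2\pmod 4$ at $k=1$, and the analogous relation for $\bZ/2\oplus\bZ/4$. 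The case $m=0$, $u\equiv 1\pmod 4$, where $q_2=\tfrac12 a_1^2-\tfrac{u}{2}a_2^2$, is the delicate one. I will verify it directly by computing $q_2$ on the four elements and matching against (i)--(iv). I expect this $2$-adic bookkeeping to be the main work, with the rest of the argument being routine.

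Combining the two cases, the conditions of Theorem~\ref{thm:self-mirror} hold if and only if $16\nmid d$ and no prime $p\equiv3\pmod4$ divides $d$, which is the claim.
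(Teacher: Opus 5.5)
Your reduction to the two Gram matrices, the cyclicity of $A_p$ for odd $p$ (so the odd condition reads $p\nmid d$ for $p\equiv 3\pmod 4$), and the equivalence $m\le 1\iff 16\nmid d$ all agree with the paper. One small slip: for $b=1$, hyperbolicity only forces $c\le 0$, and $c=0$ gives $\NS(X)\cong U$ with $d=1$; this is harmless.

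The gap is in the final $2$-adic step of the case $b=0$. You set out to show that for every $m\in\{0,1\}$ the congruence $\theta_1+\theta_2\equiv 0\pmod 4$ can be achieved, whatever $u$ is. That claim is false, and none of your remedies can change the residue.
\begin{itemize}
\item Replacing $\theta_1=1$ by $5$ leaves $\theta_1+\theta_2 \bmod 4$ unchanged.
\item Multiplying $\theta_2$ by the square of an odd unit multiplies it by something $\equiv 1\pmod 8$, so again nothing changes mod $4$.
\item There is no isomorphism $\langle\tfrac{\theta_1}{2}\rangle\oplus\langle\tfrac{\theta_2}{2}\rangle\cong$ (iii) or (iv) at $k=1$. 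The former takes the non-integral value $\theta_1/2$, whereas (iii) and (iv) at $k=1$ take values in $\bZ/2\bZ$.
\end{itemize}
In fact Lemmas~\ref{lem:p=2_diag_k2=k1} and~\ref{lem:p=2_diag} show that, for a diagonal presentation of these shapes, $\theta_1+\theta_2\equiv 0\pmod 4$ is equivalent to the existence of an anti-automorphism. So no change of presentation can fix it.

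With $\theta_1=1$ and $\theta_2=-u$, the $2$-part admits an anti-automorphism if and only if $u\equiv 1\pmod 4$. For example, $\NS(X)=\langle 2\rangle\oplus\langle -6\rangle$ has $m=0$, $u=3$, and $q_2=\tfrac12a_1^2+\tfrac12a_2^2$. This form takes values $\{0,\tfrac12,\tfrac12,1\}$, while $-q_2$ takes $\{0,\tfrac32,\tfrac32,1\}$, so it has no anti-automorphism. You also have the cases reversed: $m=0$ with $u\equiv 1\pmod 4$ is the trivial case, since then $\theta_1+\theta_2=1-u\equiv 0$ already.

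The missing observation, which is exactly what the paper uses, is that the $2$-adic congruence follows from the odd-prime hypothesis. Since $d=2^{m+2}u$, the integer $u$ divides $d$. If no prime $p\equiv 3\pmod 4$ divides $d$, then $u$ is a product of primes $\equiv 1\pmod 4$, so $u\equiv 1\pmod 4$. The natural presentation then has $\theta_1+\theta_2=1-u\equiv 0\pmod 4$, and (i) or (ii) holds.

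When $u\equiv 3\pmod 4$ the $2$-adic condition genuinely fails. But then $u$ has a prime factor $\equiv 3\pmod 4$, so the odd condition has already failed and the final equivalence is unaffected. Replace your ``$2$-adic bookkeeping'' with this one-line argument and the proof is complete.
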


\bigskip
\noindent\textbf{Organization of the paper.}
In Section~\ref{sec:lattice-pol-AbS}, we define lattice-polarized abelian surfaces and construct their coarse moduli spaces. Section~\ref{sec:mirror-pair} is devoted to the construction of stringy K\"ahler moduli spaces, as well as proving Theorems~\ref{thm:mirror-moduli}, \ref{thm:symp-dual}, and \ref{thm:crit_mirror-partner}. In Section~\ref{sec:self-mirror}, we prove Theorem~\ref{thm:self-mirror}, Corollary~\ref{cor:self-mirror_prinAbS}, and present examples of self-mirror abelian surfaces.

\bigskip
\noindent\textbf{Acknowledgment.}
We are grateful to Igor Dolgachev for clarifying the correct sources for the notions of lattice-polarized K3~surfaces and their mirror symmetry. The first author is partially supported by the National Natural Science Foundation of China (Grant No.~12401053). The second author is supported by the NSTC Research Grant 113-2115-M-029-003-MY3 and partially supported by the Academic Summit Program 114-2639-M-002-009-ASP. The second author also thanks Pedro N\'u\~nez and Flora Poon for organizing the conference \emph{Tour de Formosa} (January~12--16, 2026), during which most participants cycled through eastern and southern parts of Taiwan. Although the second author traveled by train, he sincerely appreciates the opportunity to present this work and to bring along his 17-month-old child for the little one’s very first journey.

\section{Lattice-polarized abelian surfaces}
\label{sec:lattice-pol-AbS}

In this section, we introduce the notion of lattice-polarized abelian surfaces and construct their coarse moduli spaces. We begin with a review of marked complex two-tori and set up the notation to be used throughout the paper.

\subsection{Marked complex 2-tori}
\label{subsec:marked-cplx-tori}

The main references for the following material are  \cite{BLComplexTori}*{Sections~1.10, 7.3} and \cite{Shioda}. Recall that for a complex torus $X$ of dimension~$2$,
$$
    H^1(X,\bZ)\cong\bZ^4, \qquad
    H^2(X,\bZ)\cong\txtwedge^2 H^1(X,\bZ)\cong\bZ^6,
$$
and under the cup product
$$
    H^2(X,\bZ)\times H^2(X,\bZ)
    \longrightarrow H^4(X,\bZ)\cong\bZ,
$$
the group $H^2(X,\bZ)$ has the structure of a lattice isometric to $U^{\oplus3}$. In view of these facts, throughout the paper we adopt the following conventions:
\begin{itemize}
    \item Let $L\cong\bZ^4$ be the free $\bZ$-module of rank $4$, and fix an isomorphism $\txtwedge^4L\cong\bZ$.
    \item Let $\Lambda\coloneqq\txtwedge^2L\cong U^{\oplus 3}$ be the rank-$6$ lattice endowed with the bilinear form
    $$
        \txtwedge^2L\times\txtwedge^2L
        \longrightarrow
        \txtwedge^4L\cong\bZ
        : (v, v')\longmapsto v\wedge v'.
    $$
\end{itemize}

Following \cite{Shioda}*{Section~1}, we call an ordered basis $\left(u^1,u^2,u^3,u^4\right)$ of $L$ \emph{admissible} if
$$
    u^1\wedge u^2\wedge u^3\wedge u^4
    = 1
$$
under the chosen isomorphism $\txtwedge^4L\cong\bZ$. Note that for $L = H^1(X,\bZ)$, the admissibleness is naturally defined since the orientation of $X$ determines a canonical isomorphism
$$
    \txtwedge^4L\cong H^4(X,\bZ)\cong\bZ.
$$
A \emph{marking} of $X$ is a choice of a basis of $H^1(X,\bZ)$, that is, an isomorphism
$$
    \mu\colon H^1(X,\bZ)\longrightarrow L.
$$
We call a marking \emph{admissible} if it maps an admissible basis to an admissible one, which holds precisely when the induced map
$$
    \wedge^2\mu\colon H^2(X,\bZ)\longrightarrow\Lambda
$$
is an isometry of lattices.

More generally, consider a family $\pi\colon\cX\rightarrow S$, namely a flat holomorphic map of complex manifolds, such that the fibers are complex $2$-tori. A \emph{marking} of such a family is an isomorphism of local systems
$$
    \mu\colon R^1\pi_*\bZ
    \longrightarrow
    \underline{L}_S.
$$
The moduli functor for marked complex tori $(\cX, \mu)$ admits a fine moduli space, constructed as follows. We identify the Grassmannian $\Gr(2, 4)$ with the space of quotient maps
$$
    w\colon\bC^{4}\longrightarrow V\cong\bC^2.
$$
After choosing a basis for $V$, each element $w\in\Gr(2, 4)$ is represented by a matrix
$$
    \Pi = \begin{pmatrix}
        \Pi_{11} & \Pi_{12} & \Pi_{13} & \Pi_{14} \\
        \Pi_{21} & \Pi_{22} & \Pi_{23} & \Pi_{24}
    \end{pmatrix}\in\uM_{2\times4}(\bC).
$$
One checks that the quotient $V/w(\bZ^{4})$ is a complex torus precisely when
$$
    \det\begin{pmatrix}
        \Pi \\
        \overline{\Pi}
    \end{pmatrix} \neq 0.
$$
This condition is equivalent to the kernels of $w$ and $\overline{w}$ intersecting transversely, and may therefore be abbreviated as
$
    w\wedge\overline{w} \neq 0.
$
By \cite{BLComplexTori}*{Theorem~7.3.1}, the open subset
$$
    \mathcal{B}\colonequals\{
        w\in\Gr(2, 4)
    \mid
        w\wedge\overline{w}\neq 0
    \}
$$
is a fine moduli space of marked complex $2$-tori. Writing $\{e_1, e_2, e_3 ,e_4\}$ for the standard basis of $\bZ^{4}\subseteq\bC^{4}$, each element $w\colon\bC^{4}\rightarrow V$ in $\cB$ corresponds to a complex torus $V/w(\bZ^4)$, marked by the dual basis of $\{w(e_1), w(e_2), w(e_3), w(e_4)\}$.

The \emph{period} of a marked complex $2$-torus
$$
    (X,\; \mu\colon H^1(X,\bZ)\rightarrow L)
    \in\cB,
$$
is the point in $\bP(\Lambda\otimes\bC)$ that represents the image of $H^{2,0}(X)\subseteq H^2(X,\bC)$ under the isomorphism
$$
    \wedge^2\mu\colon
    H^2(X,\bC)\longrightarrow\Lambda\otimes\bC.
$$
Let $w\colon\bC^4\to V$ be the map corresponding to $(X, \mu)$, and express it as a matrix
$$
    \Pi = \begin{pmatrix}
        \Pi_{ij}
    \end{pmatrix}\in\uM_{2\times 4}(\bC)
$$
with respect to a given basis of $V$. Write $\mu = (\mu^1, \mu^2, \mu^3, \mu^4)$. Following \cite{Shioda}*{Sections~2 and 3}, the period can be explicitly realized as the $1$-dimensional subspace spanned by
$$
    v_w\colonequals
    \sum_{1\leq i<j\leq4}\det
    \begin{pmatrix}
        \Pi_{1i} & \Pi_{1j} \\
        \Pi_{2i} & \Pi_{2j}
    \end{pmatrix}
    \mu^i\wedge\mu^j
    \in\Lambda\otimes\bC.
$$
By construction, 
$$
    v_w\cdot v_w
    = \det\begin{pmatrix}
        \Pi \\ \Pi
    \end{pmatrix} = 0, \qquad
    v_w\cdot\overline{v}_w
    = \det\begin{pmatrix}
        \Pi \\ \overline{\Pi}
    \end{pmatrix} \neq 0.
$$

\begin{lem}
\label{lem:B_to_period}
The assignment $w\mapsto v_w$ defines an isomorphism
$$
    \cB\cong\left\{
        [v]\in\bP(\Lambda\otimes\bC)
    \;\middle|\;
        v\cdot v=0,\; v\cdot\overline{v}\neq0
    \right\},
$$
under which the complex $2$-tori with admissible marking form the period domain
$$
    \cD\colonequals\left\{
        [v]\in\bP(\Lambda\otimes\bC)
    \;\middle|\;
        v\cdot v=0,\; v\cdot\overline{v}>0
    \right\}\subseteq\cB.
$$
\end{lem}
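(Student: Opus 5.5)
The map $w\mapsto [v_w]$ is, after choosing the basis $\mu^i\wedge\mu^j$ of $\Lambda$, nothing but the Pl\"ucker embedding restricted to $\cB$. The plan is therefore to identify the image of the Pl\"ucker embedding with the quadric, impose the open conditions, and finally sort out the sign. Concretely, I will view $w\in\Gr(2,4)$ through its row space, or dually its kernel, in $\bC^4$. The $2\times2$ minors of $\Pi$ are the Pl\"ucker coordinates of the $2$-plane spanned by the rows of $\Pi$. Changing the basis of $V$ multiplies $\Pi$ on the left by some $g\in\GL(2,\bC)$ and hence multiplies $v_w$ by $\det g$, so $[v_w]\in\bP(\Lambda\otimes\bC)$ is well defined.

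The first step is to show that $w\mapsto[v_w]$ is an isomorphism from $\Gr(2,4)$ onto the Klein quadric $\{[v] : v\cdot v=0\}$. Since $v\cdot v = v\wedge v\in\txtwedge^4 L\otimes\bC\cong\bC$, the condition $v\cdot v=0$ is exactly the Pl\"ucker relation, which characterizes decomposable bivectors $v=\alpha\wedge\beta$. The inverse map sends such a $v$ to the span of $\alpha,\beta$, equivalently to the $2\times4$ matrix with rows $\alpha,\beta$. That this is an isomorphism of complex manifolds is standard, and I would simply cite it.

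The second step is to match the open conditions. The computation already stated, $v_w\cdot\overline v_w=\det\begin{pmatrix}\Pi\\ \overline\Pi\end{pmatrix}$, comes from expanding $(\alpha\wedge\beta)\wedge(\overline\alpha\wedge\overline\beta)$ as a $4\times4$ determinant via the fixed identification $\txtwedge^4L\cong\bZ$. It shows that $w\wedge\overline w\neq0$ holds if and only if $v_w\cdot\overline v_w\neq0$, so $\cB$ maps isomorphically onto the stated open subset of the quadric.

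The third step, which I expect to be the main obstacle, is to show that admissibility corresponds exactly to $v\cdot\overline v>0$. Both conditions are invariant under rescaling, since $\det\begin{pmatrix}g\Pi\\ \overline{g\Pi}\end{pmatrix}=|\det g|^2\det\begin{pmatrix}\Pi\\ \overline\Pi\end{pmatrix}$. The open set $\cB$ has exactly two connected components, distinguished by the sign of $\det\begin{pmatrix}\Pi\\ \overline\Pi\end{pmatrix}$, a real number because conjugation swaps rows pairwise and an even number of swaps occurs. Relabelling the marking by an orientation-reversing element of $\GL(4,\bZ)$, for instance swapping $\mu^1$ and $\mu^2$, changes the sign of $v\wedge\overline v$ and interchanges the two components. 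It therefore suffices to check one example. I would take $\Pi=\begin{pmatrix}1&0&i&0\\0&1&0&i\end{pmatrix}$, the standard torus $\bC^2/(\bZ[i])^2$. Its marking by the dual basis should be admissible precisely when the orientation induced on $H^1$ by the complex structure agrees with $\mu^1\wedge\mu^2\wedge\mu^3\wedge\mu^4=1$. I would then compute $\det\begin{pmatrix}\Pi\\ \overline\Pi\end{pmatrix}$ directly and confirm that it is positive, which is what the statement claims. The delicate point is keeping track of the convention linking the complex orientation of $X$ to the orientation of the dual basis. I would handle this by recalling from \cite{Shioda} that the canonical orientation satisfies $\int_X \sigma\wedge\overline\sigma>0$ for a holomorphic $2$-form $\sigma$, whose class is $v_w$; this gives $v_w\cdot\overline v_w>0$ directly for admissible markings. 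Combining this with the sign-flip argument yields the converse.
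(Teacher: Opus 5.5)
Your proposal is correct and follows the paper's proof. The paper also identifies $w\mapsto v_w$ with the Pl\"ucker embedding onto the quadric $v\cdot v=0$, translates $w\wedge\overline{w}\neq0$ into $v_w\cdot\overline{v}_w\neq0$, and settles admissibility by citing Shioda's Equation~(2.4). Your observation that $\int_X\sigma\wedge\overline{\sigma}>0$ is a valid unpacking of that citation. In fact $v_w\cdot\overline{v}_w=\pm\int_X\sigma\wedge\overline{\sigma}$, with the sign being exactly the admissibility sign of $\mu$, so both directions follow at once and the component count is not needed.

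One slip is in your sanity check. For $\Pi=\begin{pmatrix}1&0&i&0\\0&1&0&i\end{pmatrix}$ one gets $\det\begin{pmatrix}\Pi\\ \overline{\Pi}\end{pmatrix}=-4$, not a positive number. The reason is that the lattice basis $(1,0),(0,1),(i,0),(0,i)$ is negatively oriented for the complex orientation, so this dual-basis marking is \emph{not} admissible. That is still consistent with the lemma, but it does not confirm positivity. The matrix $\begin{pmatrix}1&i&0&0\\0&0&1&i\end{pmatrix}$ gives an admissible marking and determinant $+4$.
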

\begin{proof}
The map $w\mapsto v_w$ coincides with the Pl\"ucker embedding $\Gr(2,4)\hookrightarrow\bP^5$, whose image is defined by the quadric $v\cdot v=0$. In this setting, the non-degeneracy condition $w\wedge\overline{w}\neq0$ translates to $v_w\cdot\overline{v_w}\neq0$. This establishes the identification for $\cB$. The statement regarding admissible markings follows from \cite{Shioda}*{Equation~(2.4)}.
\end{proof}

\subsection{A lattice-theoretic lemma}
\label{subsec:condition-mirror}

If an abelian surface $X$ has a mirror partner $Y$, then by definition its transcendental lattice $T(X)$ is isomorphic to $N(Y)$, and hence contains a copy of the hyperbolic plane $U$. We now establish a lattice-theoretic lemma that describes several conditions equivalent to this property. The coarse moduli spaces will be constructed for lattice-polarizations satisfying these equivalent properties.

\begin{deflem}
\label{deflem:mirror}
A primitive sublattice $M\subseteq U^{\oplus3}$ of signature $(1,r-1)$ is said to satisfy Condition~$\diamondsuit$\footnote{
    The symbol $\diamondsuit$ is intended to suggest a mirror.
} if any of the following equivalent statements holds:
\begin{enumerate}[label=\textup{(\arabic*)}]
\item\label{mirror:U-in-Mperp}
    The orthogonal complement $M^\perp\subseteq U^{\oplus 3}$ contains a copy of $U$.
\item\label{mirror:M-in-2U}
    $M$ admits a primitive embedding into $U^{\oplus2}$.
\item\label{mirror:rank}
    $r = \rank(M)\leq 3$, and if $r = 3$, then $M\cong U\oplus\bZ(-2n)$ for some positive integer~$n$.
\item\label{mirror:N-in-Mperp}
    There exists a primitive sublattice $N\subseteq U^{\oplus3}$ such that $M^\perp\cong N\oplus U$.
\item\label{mirror:M-in-Nperp}
    There exists a primitive sublattice $N\subseteq U^{\oplus3}$ such that $N^\perp\cong M\oplus U$.
\item\label{mirror:pair}
    There exists a primitive sublattice $N\subseteq U^{\oplus3}$ with equalities
    $$
        M^\perp = N\oplus U
        \qquad\text{and}\qquad
        N^\perp = M\oplus U.
    $$
\end{enumerate}
\end{deflem}

\begin{proof}
A key fact to be used in the proof is that all embeddings $U\hookrightarrow U^{\oplus3}$ are equivalent under the action of the orthogonal group $\uO(U^{\oplus3})$ \cite{Nik79}*{Theorem~1.14.4}. We first show that \ref{mirror:U-in-Mperp}, \ref{mirror:M-in-2U}, and \ref{mirror:rank} are equivalent.
\begin{itemize}
\item[] \ref{mirror:U-in-Mperp} $\Longleftrightarrow$ \ref{mirror:M-in-2U}:
    If $M^\perp$ contains a copy of $U$, then, under the action of $\uO(U^{\oplus3})$, we may assume that this copy coincides with the last summand of $U^{\oplus3}$. It follows that $M$ is a primitive sublattice of $U^\perp = U^{\oplus 2}$. Conversely, if $M\subseteq U^{\oplus 2}$, then $M^\perp$ contains the orthogonal complement of $U^{\oplus 2}$, which is isometric to $U$.
\item[] \ref{mirror:M-in-2U} $\Longleftrightarrow$ \ref{mirror:rank}:
    Suppose $M$ is a primitive sublattice in $U^{\oplus 2}$. Since $M$ and $U^{\oplus 2}$ have signatures $(1, r-1)$ and $(2, 2)$, respectively, we have $r\leq 3$. If $r = 3$, then $M$ has signature $(1, 2)$. This implies that $M^\perp\subseteq U^{\oplus 2}$ is isomorphic to $\bZ(2n)$ for some positive integer~$n$. Identifying $\bZ(2n)$ as a sublattice of one summand of $U^{\oplus 2}$ via $\uO(U^{\oplus 2})$, we conclude that
    $$
        M = \bZ(2n)^\perp\cong U\oplus\bZ(-2n).
    $$
    Conversely, if $M$ has rank $r\leq2$, it admits a primitive embedding into $U^{\oplus2}$; see, for example, \cite{Huy16}*{Proposition~14.1.8}. If $M\cong U\oplus\bZ(-2n)$, the desired embedding can be obtained by extending a primitive embedding $\bZ(-2n)\hookrightarrow U$.
\end{itemize}

Next, we show that
(\ref{mirror:N-in-Mperp} or \ref{mirror:M-in-Nperp})
    $\Longrightarrow$
(\ref{mirror:U-in-Mperp} or \ref{mirror:M-in-2U})
    $\Longrightarrow$
\ref{mirror:pair}
    $\Longrightarrow$
(\ref{mirror:N-in-Mperp} and \ref{mirror:M-in-Nperp}).
Note that the implication
\ref{mirror:N-in-Mperp}
    $\Longrightarrow$
\ref{mirror:U-in-Mperp}
and the one
\ref{mirror:pair}
    $\Longrightarrow$
(\ref{mirror:N-in-Mperp} and \ref{mirror:M-in-Nperp})
are immediate.
\begin{itemize}
\item[] \ref{mirror:M-in-Nperp}$\implies$\ref{mirror:M-in-2U}:
By applying $\uO(U^{\oplus3})$, we may identify the $U$-summand of $M\oplus U$ with one of the summands of $U^{\oplus 3}$, thereby realizing $M$ as a sublattice of $U^\perp = U^{\oplus 2}$. This embedding is primitive since $N^\perp\cong M\oplus U$ is primitive.

\item[] \ref{mirror:M-in-2U}$\implies$\ref{mirror:pair}:
Let $N\subseteq U^{\oplus2}$ be the orthogonal complement of $M\subseteq U^{\oplus2}$. This gives equalities $M^\perp = N$ and $N^\perp = M$ as sublattices in $U^{\oplus 2}$. Adding one more copy of $U$, we obtain $M^\perp = N\oplus U$ and $N^\perp = M\oplus U$ as sublattices of $U^{\oplus 3}$.
\end{itemize}
This completes the proof.
\end{proof}

\begin{cor}
\label{cor:numGroth-trans}
Let $X$ and $Y$ be abelian surfaces. Then $T(X)$ is isomorphic to $N(Y)$ if and only if $N(X)$ is isomorphic  to $T(Y)$.
\end{cor}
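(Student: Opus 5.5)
By the symmetry of the statement, it suffices to prove one direction: assuming $T(X)\cong N(Y)$, we show $N(X)\cong T(Y)$. The plan is to apply Definition-Lemma~\ref{deflem:mirror} to $M=\NS(Y)\subseteq H^2(Y,\bZ)\cong U^{\oplus3}$. This is a primitive sublattice of signature $(1,\rho(Y)-1)$ by the Hodge index theorem. Throughout we use the identification $N(\,\cdot\,)\cong\NS(\,\cdot\,)\oplus U$.

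\textbf{Step 1: show that $\NS(Y)$ satisfies Condition~$\diamondsuit$.} Embed $T(X)\subseteq H^2(X,\bZ)\cong U^{\oplus3}$ via an admissible marking. The hypothesis gives $T(X)\cong N(Y)\cong\NS(Y)\oplus U$. Hence $\NS(X)^\perp=T(X)$ is isometric to $\NS(Y)\oplus U$, with $\NS(X)$ primitive in $U^{\oplus3}$. This is statement~\ref{mirror:M-in-Nperp} of Definition-Lemma~\ref{deflem:mirror}, with $M=\NS(Y)$ and $N=\NS(X)$. So $\NS(Y)$ satisfies Condition~$\diamondsuit$.

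\textbf{Step 2: compute $T(Y)$.} By statement~\ref{mirror:pair}, there is a primitive $N'\subseteq U^{\oplus3}$ with $\NS(Y)^\perp=N'\oplus U$ and $N'^\perp=\NS(Y)\oplus U$. Therefore $T(Y)\cong N'\oplus U$.

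\textbf{Step 3: identify $N'$ with $\NS(X)$.} This is the main obstacle. We know that $\NS(X)$ and $N'$ are both primitive sublattices of $U^{\oplus3}$ whose orthogonal complements are isometric to $\NS(Y)\oplus U$. We must conclude $N'\cong\NS(X)$, and then $N(X)\cong\NS(X)\oplus U\cong N'\oplus U\cong T(Y)$.

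The proof will use discriminant forms. Since $U^{\oplus3}$ is unimodular, for any primitive sublattice $P$ we have $q_{P^\perp}\cong -q_P$ \cite{Nik79}. It follows that $q_{\NS(X)}\cong -q_{\NS(Y)\oplus U}\cong q_{N'}$. Both lattices also have the same signature, namely the signature of $U^{\oplus3}$ minus that of $\NS(Y)\oplus U$. By \cite{Nik79}*{Theorem~1.14.2}, same signature and discriminant form imply isometry when the lattice is even and indefinite with rank at least $\ell(A)+2$. This handles the indefinite cases. The remaining small cases are treated directly:
\begin{itemize}
\item $\rho(X)=1$: here $\NS(X)\cong\bZ(2n)$, and a rank-one lattice is determined by its discriminant.
\item $\rho(X)=2$: here $\NS(X)$ is hyperbolic of rank $2$.
\end{itemize}

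\textbf{Fallback for Step 3 if uniqueness fails.} We avoid the genus question altogether. Using that all embeddings $U\hookrightarrow U^{\oplus3}$ are $\uO(U^{\oplus3})$-equivalent, move the $U$-summand of $\NS(X)^\perp\cong\NS(Y)\oplus U$ to the last summand of $U^{\oplus3}$. Then $\NS(X)$ lies in $U^{\oplus2}$ and equals the orthogonal complement there of a primitive copy of $\NS(Y)$. Running the argument of \ref{mirror:M-in-2U}$\Rightarrow$\ref{mirror:pair} on this specific embedding produces $N'$ equal to $\NS(X)$ itself, because $N'$ was chosen as the complement of $\NS(Y)$ inside $U^{\oplus2}$. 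This yields $T(Y)\cong\NS(X)\oplus U=N(X)$ directly.

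The subtle point in this fallback is that the embedding of $\NS(Y)$ into $H^2(Y,\bZ)$ might differ from the one obtained inside $U^{\oplus2}$. To handle it, we use that embeddings of $M$ into $U^{\oplus3}$ whose complement contains $U$ are unique up to $\uO(U^{\oplus3})$. This follows from Nikulin's uniqueness criterion, since the complement contains $U$.
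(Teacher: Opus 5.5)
Steps~1 and~2 are fine, but the main version of Step~3 breaks down at $\rho(X)=2$. For $\rho(X)=1$ and $\rho(X)=3$ your genus argument works; in the latter case $\ell(A_{\NS(X)})=\ell(A_{\NS(Y)})=1$, so Nikulin's Theorem~1.14.2 applies. For $\rho(X)=2$, however, the hypothesis $\rank\ge\ell(A)+2$ fails unless $\NS(X)\cong U$, and ``$\NS(X)$ is hyperbolic of rank~$2$'' is not an argument. In fact the claim $N'\cong\NS(X)$ is false in general. Statement~(6) only asserts existence, and every lattice $N''$ in the genus of $\NS(X)$ can play the role of $N'$. Indeed, $N''\oplus U$ is unique in its genus, so $T(Y)\cong N''\oplus U$, and the image of $N''$ under such an isometry satisfies both equalities in~(6). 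Rank-two indefinite genera can contain several classes. For instance, $\bZ(2)\oplus\bZ(-68)$ and $\bZ(-2)\oplus\bZ(68)$ have the same signature and isometric discriminant forms. Only the second represents $-2$, because $x^2-34y^2=-1$ has no integer solutions. This is the same phenomenon as in Proposition~\ref{prop:stably-equiv}.

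Your fallback, on the other hand, is correct, and it is essentially the paper's proof. The paper applies (4)$\Rightarrow$(5) of Definition-Lemma~\ref{deflem:mirror} with $M=\NS(X)$ and $N=\NS(Y)$. After the $U$-summand of $T(X)$ is moved to the last copy of $U$, $\NS(X)$ and a copy of $\NS(Y)$ become mutual orthogonal complements in $U^{\oplus 2}$. So that copy of $\NS(Y)$ has complement $\NS(X)\oplus U$ in $U^{\oplus3}$, and the paper reads this as $T(Y)\cong\NS(X)\oplus U$ without comment. Your last remark supplies exactly this transfer between the two embeddings of $\NS(Y)$, and it is valid. A lattice $K\oplus U$ with $K$ even is indefinite of rank at least $\ell(A_K)+2$, hence unique in its genus. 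You only need this for the constructed complement $\NS(X)\oplus U$, since $T(Y)$ lies in the same genus automatically. Its signature is $(2,\rho(X))$ because $\rho(X)+\rho(Y)=4$, and its discriminant form is $-q_{\NS(Y)}\cong q_{\NS(X)}$. So this genus comparison of $T(Y)$ with $N(X)$ already proves the corollary. Steps~1 and~2 and the comparison of $N'$ with $\NS(X)$ can all be dropped; equivalently, compare $N'\oplus U$ with $\NS(X)\oplus U$ rather than $N'$ with $\NS(X)$.
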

\begin{proof}
Choose an isometry
$
    \bigoplus_{i = 0}^2H^{2i}(X, \bZ)
    \cong U^{\oplus 4}
$
under which $H^2(X,\bZ)\cong U^{\oplus 3}$ coincides with the first three copies of $U$. With this identification, we have
$$
    N(X) = \NS(X)\oplus U, \qquad
    T(X) = \NS(X)^{\perp U^{\oplus 3}}.
$$
Make the same choice for $Y$ so that similar identifications hold for it. If $T(X)\cong N(Y)$, then
$$
    \NS(X)^{\perp U^{\oplus 3}}
    = T(X)
    \cong N(Y)
    = \NS(Y)\oplus U.
$$
This corresponds to \ref{mirror:N-in-Mperp} in Definition-Lemma~\ref{deflem:mirror} with $M = \NS(X)$ and $N = \NS(Y)$. Condition~\ref{mirror:M-in-Nperp} of the same lemma then yields
$
    \NS(Y)^{\perp U^{\oplus 3}}
    \cong\NS(X)\oplus U.
$
It follows that
$$
    N(X) = \NS(X)\oplus U
    \cong \NS(Y)^{\perp U^{\oplus 3}}
    = T(Y)
$$
establishing $N(X)\cong T(Y)$. The proof for the converse is similar.
\end{proof}

\subsection{Lattice-polarization and complex moduli}
\label{subsec:lattice-pol-AbS}

We now introduce lattice-polarized abelian surfaces, in analogy with Nikulin's lattice-polarized K3~surfaces \cite{Nik79_K3}; see also \cite{Dol96}. The key difference is that the marking is taken on the first cohomology group rather than the second, since the period of an abelian surface does not uniquely determine the surface itself.

Let $M\subseteq\Lambda$  be a primitive sublattice of signature $(1, r-1)$. We say that a marked abelian surface $(X, \mu)$ is \emph{$M$-polarized} if
\begin{itemize}
    \item the marking $\mu\colon H^1(X, \bZ)\rightarrow L$ is admissible, and
    \item under the isometry 
    $
        \wedge^2\mu\colon
        H^2(X,\bZ)\rightarrow\Lambda,
    $
    we have $(\wedge^2\mu)^{-1}(M)\subseteq\NS(X)$.
\end{itemize}
Fix a vector $h\in M\otimes\bR$ with $h^2 > 0$. We say that $(X, \mu)$ is \emph{$(M, h)$-polarized} if, in addition,
\begin{itemize}
    \item the class $(\wedge^2\mu)^{-1}(h)\in\NS(X)$ is ample.
\end{itemize}
For each pair $(M, h)$ as above, we define a functor
$$
    \cM_{M,\,h}\colon
    \left(\text{Complex analytic varieties}\right)^{\rm op}
    \longrightarrow
    \mathbf{Set}
$$
which maps a space $S$ to the set of isomorphism classes of marked families of abelian surfaces $(\pi\colon\cX\rightarrow S,\;\mu)$ such that for each $s\in S$, the fiber $(\cX_s, \mu_s)$ is an $(M, h)$-polarized abelian surface. Here, two such families
$
    \left(
        \pi\colon\cX\to S,\; \mu
    \right)
$
and
$
    \left(
        \pi'\colon\cX'\to S,\; \mu'
    \right)
$
are said to be \emph{isomorphic} if there exists an $S$-isomorphism $f\colon\cX\to\cX'$ such that for any $s\in S$, the following diagram commutes:
$$\xymatrix@R=2em@C=8em{
    & \NS(\cX_s)\\
    M\ar[ur]^-{
        \left(
            \wedge^2\mu_s
        \right)^{-1}\big|_M
    }\ar[dr]_-{
        \left(
            \wedge^2\mu'_s
        \right)^{-1}\big|_M
    } & \\
    & \NS(\cX'_s)\ar[uu]_-{
        f_s^*\big|_{\NS(\cX'_s)}
    }
}$$

We now proceed to construct the coarse moduli space for the functor $\cM_{M,\, h}$ under the assumption that the lattice $M\subseteq\Lambda$ satisfies Condition~\hyperref[deflem:mirror]{$\diamondsuit$}.

Let $M^\perp\subseteq\Lambda$ denote the orthogonal complement of $M$, and consider the period domain
$$
    \cD_M\colonequals\left\{
        [v]\in\bP\left(M^\perp\otimes\bC\right)
    \;\middle|\;
        v\cdot v = 0,\; v\cdot\overline{v} > 0
    \right\}.
$$
This domain has two connected component, distinguished by the orientation of the positive-definite plane $\left<\mathrm{Re}(v), \mathrm{Im}(v)\right>\subseteq M^\perp\otimes\bR$. It carries a natural action of the discrete group
$$
    \Gamma_M\colonequals\{
        \phi\in\SO(\Lambda)
    \mid
        \phi|_M = \mathrm{id}_M
    \}.
$$
Let $\SO^+(\Lambda)\subseteq\SO(\Lambda)$ be the index-two subgroup preserving the orientation of positive-definite $3$-planes in $\Lambda\otimes\bR$. Then the subgroup
$$
    \Gamma_M^+\colonequals\Gamma_M\cap\SO^+(\Lambda)
$$
preserves each connected component of $\cD_M$.

In what follows, for a marked complex torus $(X,\mu)$, we denote by
$$
    p(X,\mu)\in\cB\subseteq\bP(\Lambda\otimes\bC)
$$
its period, namely the point representing the image of the line $H^{2,0}(X)\subseteq H^2(X,\bC)$ under the isometry
$$
    \wedge^2\mu\colon H^2(X,\bC)
    \longrightarrow\Lambda\otimes\bC.
$$
Recall that, by Lemma~\ref{lem:B_to_period}, every $p\in\cB$ is the period of some marked complex torus $(X, \mu)$, and there can be written as $p = p(X,\mu)$.

\begin{lem}
\label{lem:D_M}
Let $M\subseteq\Lambda$ be a primitive sublattice of signature $(1,r-1)$. Then
$$
    \cD_M = \{\,
        p(X,\mu)\in\cD
    \mid
        (X,\mu)\text{ is }M\text{-polarized}
    \,\}
$$
\end{lem}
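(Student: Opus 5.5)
The plan is to prove the two inclusions directly from the definitions, using Lemma~\ref{lem:B_to_period} and the Lefschetz $(1,1)$-theorem. The key input is the standard description of the Néron--Severi group of a complex torus $X$:
$$
    \NS(X) = \{\, x\in H^2(X,\bZ) \mid x\cdot\sigma = 0 \text{ for } \sigma \text{ spanning } H^{2,0}(X)\,\}.
$$
This holds because $H^{0,2}=\overline{H^{2,0}}$ and the cup product pairs $H^{1,1}$ with itself and $H^{2,0}$ with $H^{0,2}$. Consequently, an integral class lies in $H^{1,1}$ exactly when it is orthogonal to both $\sigma$ and $\overline\sigma$, and for a real class orthogonality to $\sigma$ already implies orthogonality to $\overline\sigma$.

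\emph{Inclusion $\supseteq$.} Let $(X,\mu)$ be $M$-polarized with admissible marking. Then $\wedge^2\mu$ is an isometry, so $p(X,\mu)=[v]$ with $v\cdot v=0$ and $v\cdot\overline v>0$ by Lemma~\ref{lem:B_to_period}. Since $(\wedge^2\mu)^{-1}(M)\subseteq \NS(X)$ and $\NS(X)\perp H^{2,0}(X)$, every $m\in M$ satisfies $m\cdot v=0$. Hence $v\in (M\otimes\bC)^\perp = M^\perp\otimes\bC$; the latter equality holds because $M$ is nondegenerate of signature $(1,r-1)$. Therefore $[v]\in\cD_M$.

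\emph{Inclusion $\subseteq$.} Take $[v]\in\cD_M$. Then $[v]\in\cD$, so by Lemma~\ref{lem:B_to_period} there is an admissibly marked complex torus $(X,\mu)$ with $p(X,\mu)=[v]$. For $m\in M$, the class $(\wedge^2\mu)^{-1}(m)$ is integral and orthogonal to $v$, which spans the image of $H^{2,0}(X)$. By the description of $\NS$ above, it lies in $\NS(X)$, so $(\wedge^2\mu)^{-1}(M)\subseteq\NS(X)$.

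It remains to check that $X$ is an \emph{abelian} surface, which is needed because $\cD$ parametrizes all admissibly marked complex tori. This is the main point of the proof. Since $M$ has signature $(1,r-1)$, it contains a class $m$ with $m^2>0$. Its preimage is then a class in $\NS(X)$ with positive self-intersection. On a complex $2$-torus, a line bundle $\mathcal L$ with $c_1(\mathcal L)^2>0$ has a nondegenerate Hermitian form which is definite, either positive or negative. Replacing $m$ by $-m$ if necessary, we obtain an ample class. The justification is the standard fact, found in \cite{BLComplexTori}, that a Hermitian form $H$ on $\bC^2$ has $\det H$ proportional to $c_1^2$, so $c_1^2>0$ forces signature $(2,0)$ or $(0,2)$. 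By the Appell--Humbert and Lefschetz theorems, $X$ is then projective, hence an abelian surface. Thus $(X,\mu)$ is $M$-polarized, which completes the argument.
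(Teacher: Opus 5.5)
Your proof is correct and follows the paper's argument. Both inclusions are checked through the description of $\NS(X)$ as the integral classes orthogonal to $H^{2,0}(X)$, with Lemma~\ref{lem:B_to_period} supplying admissibility, and your extra verification that the torus is projective (a class of positive square in $\NS$ of a $2$-torus is ample or anti-ample) makes explicit a step the paper leaves implicit when it writes ``Thus $(X,\mu)$ is an $M$-polarized abelian surface.''
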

\begin{proof}
Let $(X,\mu)$ be an $M$-polarized abelian surface. By definition, $\mu$ is admissible and satisfies $(\wedge^2\mu)^{-1}(M)\subseteq\NS(X)$. The former condition implies $p(X,\mu)\in\cD$ by Lemma~\ref{lem:B_to_period}, and the latter ensures $p(X,\mu)\in\cD_M$. Hence,
$$
    \cD_M \supseteq \{\,
        p(X,\mu)\in\cD
    \mid
        (X,\,\mu)\text{ is }M\text{-polarized}
    \,\}.
$$
Conversely, every $p\in\cD_M\subseteq\cB$ is the period of some marked complex torus $(X,\mu)$. The condition $p\in\cD_M$ implies
$$
    H^{2,0}(X)\subseteq (\wedge^2\mu)^{-1}(M)^{\perp},
    \quad\text{or equivalently,}\quad
    \NS(X)\supseteq (\wedge^2\mu)^{-1}(M).
$$
Thus $(X,\mu)$ is an $M$-polarized abelian surface. This proves the reverse inclusion.
\end{proof}

\begin{lem}
\label{lem:D_M-plus-minus}
Let $M\subseteq\Lambda$ be a primitive sublattice of signature $(1,r-1)$ satisfying Condition~\hyperref[deflem:mirror]{$\diamondsuit$}, and fix a vector $h\in M\otimes\bR$ with $h^2>0$. Then the two connected components of $\cD_M$ correspond precisely to the subsets
\begin{align*}
    \cD_M^+ &\colonequals\{
        p(X,\mu)\in\cD_M
    \mid
        (X,\mu)\text{ is }(M,h)\text{-polarized}
    \}, \\
    \cD_M^- &\colonequals\left\{
        p(X,\mu)\in\cD_M
    \;\middle|\;
        (X,\mu)\text{ is }(M,-h)\text{-polarized}
    \right\}.
\end{align*}
\end{lem}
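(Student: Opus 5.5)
The plan is to show three things: $\cD_M^+$ and $\cD_M^-$ are disjoint, their union is $\cD_M$, and each is open (hence each is a union of components). Then we show that each is nonempty and that the two components are swapped rather than mixed.

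For a point $p=p(X,\mu)\in\cD_M$, Lemma~\ref{lem:D_M} says $(X,\mu)$ is $M$-polarized, and in particular $X$ is a complex torus. The class $\eta\colonequals(\wedge^2\mu)^{-1}(h)\in\NS(X)\otimes\bR$ satisfies $\eta^2>0$. For a complex $2$-torus, $\NS(X)\otimes\bR$ has signature $(1,\rho-1)$, and its positive cone has two components, $\pm$ the ample cone. Indeed, on a torus there are no curves of negative self-intersection, so the ample cone is the whole positive component containing a Kähler class. The existence of $\eta$ with $\eta^2>0$ also shows that $X$ is algebraic, that is, an abelian surface. This step needs an approximation argument when $h$ is only real; the positive cone is open and $M$ has rational points in it. Hence exactly one of $\eta,-\eta$ is ample, so $\cD_M=\cD_M^+\sqcup\cD_M^-$.

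For openness, I would use the orientation description. Let $\sigma$ be a generator of $H^{2,0}$. A real class $\eta$ with $\eta^2>0$ orthogonal to $\sigma$ lies in the positive cone of the Kähler side, and hence is Kähler, exactly when the positive $3$-plane $\langle \mathrm{Re}\,\sigma,\mathrm{Im}\,\sigma,\eta\rangle$ carries the orientation induced from $\Lambda\otimes\bR$ by the admissible marking. This is the standard fact that $\mathrm{Re}\,\sigma\wedge\mathrm{Im}\,\sigma\wedge\omega$ is positively oriented for a Kähler class $\omega$; equivalently, $\sigma\wedge\overline\sigma\wedge\omega$ has a fixed sign relative to the orientation. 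With this, $p\in\cD_M^+$ if and only if the oriented plane $\langle\mathrm{Re}\,v,\mathrm{Im}\,v\rangle$, together with $h$, is positively oriented in $\Lambda\otimes\bR$.

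Since $h$ is fixed and orthogonal to $M^\perp$, this is the same as a fixed orientation of the positive $2$-plane in $M^\perp\otimes\bR$. That orientation is exactly the invariant distinguishing the two components of $\cD_M$. It is locally constant, and it flips under $v\mapsto\overline v$, which maps $\cD_M$ to itself. So both sets are nonempty and each is one full component, provided $\cD_M\neq\emptyset$. Nonemptiness holds because $M^\perp$ has signature $(2,4-r)$, so it contains a positive definite real $2$-plane. Condition~$\diamondsuit$ is not really needed here beyond guaranteeing this setup.

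The main obstacle is justifying the orientation criterion for the ample cone in terms of the marking's orientation. I would verify it directly on a model: take $X=\bC^2/\bZ^4$ with the standard admissible basis, $\sigma=dz_1\wedge dz_2$, and $\omega=\tfrac{i}{2}(dz_1\wedge d\overline z_1+dz_2\wedge d\overline z_2)$, and compute that $\sigma\wedge\overline\sigma\wedge\omega$ is a positive multiple of the orientation class. Then I would extend by connectedness of the space of pairs $(\sigma,\omega)$ within a component, using the fact that $\cD$ is connected as a period domain of admissibly marked tori.
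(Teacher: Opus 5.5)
Your argument is correct, but it differs from the paper's in both main steps. Disjointness and covering come from the ample/anti-ample dichotomy in both proofs. After that, the paper uses only that (anti-)ampleness is open in families, so $\cD_M^{+}$ and $\cD_M^{-}$ are disjoint open sets covering $\cD_M$. Since $\cD_M$ has exactly two components, it remains to show that both sets are nonempty. This is the only place the paper uses Condition~$\diamondsuit$. The isometry acting as $-\mathrm{id}$ on a copy of $U^{\oplus 2}$ containing $M$, and trivially on the complementary $U$, lies in $\SO^+(\Lambda)$. By Shioda's Lemma~1 it is therefore induced by an admissible automorphism of $L$. Re-marking a given $(X,\mu)$ by it keeps $M$ inside $\NS(X)$ but makes the class of $h$ anti-ample.

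You do two things differently. First, you identify $\cD_M^{+}$ directly with one orientation class of the planes $\langle\mathrm{Re}\,v,\mathrm{Im}\,v\rangle\subseteq M^\perp\otimes\bR$, using the orientation property of $\langle\mathrm{Re}\,\sigma,\mathrm{Im}\,\sigma,\omega\rangle$ for Kähler classes $\omega$. Second, you reach the other component via $[v]\mapsto[\overline{v}]$. Geometrically this is $X\mapsto\overline{X}$ with the same marking, which stays admissible because conjugation preserves the orientation of a complex surface, and which turns ample classes into anti-ample ones.

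What your route buys: it shows Condition~$\diamondsuit$ is unnecessary for this lemma, and it proves $\cD_M\neq\emptyset$, which the paper takes for granted. Its cost is the orientation criterion. The real content there is the connectedness of the space of admissibly marked tori equipped with a Kähler class; the model computation only fixes the sign convention. Also, your expression $\sigma\wedge\overline{\sigma}\wedge\omega$ must be read in $\bigwedge^3 H^2(X,\bR)$, not in the cohomology ring, where it is zero.

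You could in fact skip the criterion entirely: the paper's openness argument combined with your conjugation trick already proves the lemma. The paper's re-marking, for its part, gives slightly more than the statement needs, namely that the same abelian surface appears in both components.
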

\begin{proof}
Lemma~\ref{lem:D_M} asserts that $\cD_M$ consists precisely of the periods of $M$-polarized abelian surfaces, so the two subsets can be identified respectively as
\begin{align*}
    \cD_M^+ &= \left\{
        p(X,\mu)\in\cD_M
    \;\middle|\;
        (\wedge^2\mu)^{-1}(h)\in\NS(X)\text{ is ample}
    \right\}, \\
    \cD_M^- &= \left\{
        p(X,\mu)\in\cD_M
    \;\middle|\;
        (\wedge^2\mu)^{-1}(h)\in\NS(X)\text{ is anti-ample}
    \right\}.
\end{align*}
For every $p(X,\mu)\in\cD_M$, the class
$
    (\wedge^2\mu)^{-1}(h)\in\NS(X)
$
is positive, and hence is either ample or anti-ample. Thus the subsets $\cD_M^+$ and $\cD_M^-$ are disjoint, and their union is all of $\cD_M$. Both subsets are open, since (anti-)ampleness is an open condition in families. Consequently, if each subset is non-empty, they are exactly the two connected components of $\cD_M$. Hence, to complete the proof, it remains to construct $M$-polarized abelian surfaces $(X, \mu)$ and $(X', \mu')$ such that $(\wedge^2\mu)^{-1}(h)$ is ample and $(\wedge^2\mu')^{-1}(h)$ is anti-ample.

Let $(X, \mu\colon H^1(X,\bZ)\to L)$ be an $M$-polarized abelian surface, and first consider the case where $(\wedge^2\mu)^{-1}(h)$ is ample. Since $M$ satisfies Condition~\hyperref[deflem:mirror]{$\diamondsuit$}, it admits a primitive embedding into $U^{\oplus 2}$, which in turn embeds into $U^{\oplus 3}$ uniquely up to the action of $\uO(U^{\oplus 3})$. Consider the isometry acting as $-\mathrm{id}$ on the $U^{\oplus 2}$ containing $M$ and trivially on the remaining copy of~$U$. This isometry has determinant~$1$ and preserves the orientation of positive $3$-planes in $\Lambda$. By \cite{Shioda}*{Lemma 1}, it is therefore induced by an admissible isomorphism of $L$. Composing~$\mu$ with this isomorphism yields a new marking $\mu'$ under which $M$ is still mapped into $\NS(X)$, but~$h$ is sent to an anti-ample class. The same argument applies when $(\wedge^2\mu)^{-1}(h)$ is anti-ample. This completes the proof.
\end{proof}

\begin{lem}
\label{lem:bij_quotient}
Retain the hypothesis of Lemma~\ref{lem:D_M-plus-minus}, and let $\cD_M^+\subseteq\cD_M$ be the connected component specified there. Then there is a bijection
$$
    \cM_{M,\,h}(\mathrm{Spec}(\bC))
    \cong
    \cD_M^+/\Gamma_M^+.
$$
\end{lem}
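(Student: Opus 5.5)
The plan is to define the map from $\cM_{M,\,h}(\Spec(\bC))$ to $\cD_M^+/\Gamma_M^+$ by the period map, and to check that it is well defined, surjective, and injective.

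\textbf{Well-definedness.} An element of $\cM_{M,\,h}(\Spec(\bC))$ is an isomorphism class of $(M,h)$-polarized marked abelian surfaces $(X,\mu)$. The isomorphisms allowed are those $f\colon X\to X'$ with $f^*$ compatible with the embeddings of $M$. These do not preserve the full marking $\mu$, so the periods of isomorphic objects differ by an isometry of $\Lambda$. Concretely, I would send $(X,\mu)$ to the class of $p(X,\mu)$, which lies in $\cD_M^+$ by Lemma~\ref{lem:D_M-plus-minus}. Given an isomorphism $f\colon(X,\mu)\to(X',\mu')$, set
$$
    \phi\colonequals\wedge^2\mu\circ f^*\circ(\wedge^2\mu')^{-1}.
$$
This is induced by the automorphism $\mu\circ f^*\circ\mu'^{-1}$ of $L$. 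That automorphism is admissible, because $f$ is a biholomorphism and so preserves orientation. Hence $\phi\in\SO(\Lambda)$, and by \cite{Shioda}*{Lemma~1} it lies in $\SO^+(\Lambda)$. The diagram in the definition of isomorphism gives $\phi|_M=\mathrm{id}_M$, so $\phi\in\Gamma_M^+$. Since $f^*$ is a Hodge isometry, $\phi$ sends $p(X',\mu')$ to $p(X,\mu)$. Therefore the class in $\cD_M^+/\Gamma_M^+$ is well defined.

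\textbf{Surjectivity.} Take $p\in\cD_M^+$. By Lemma~\ref{lem:B_to_period} it is the period of a marked complex torus $(X,\mu)$ with admissible marking. Lemma~\ref{lem:D_M} shows that $(X,\mu)$ is $M$-polarized, and the definition of $\cD_M^+$ in Lemma~\ref{lem:D_M-plus-minus} shows that $(\wedge^2\mu)^{-1}(h)$ is ample. In particular $X$ is an abelian surface.

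\textbf{Injectivity.} Suppose $p(X',\mu')=\phi(p(X,\mu))$ for some $\phi\in\Gamma_M^+$. By \cite{Shioda}*{Lemma~1}, $\phi$ is induced by an admissible automorphism $g$ of $L$, determined up to sign. Replace $\mu$ by $\mu''\colonequals g\circ\mu$. Then $(X,\mu'')$ is still $(M,h)$-polarized, since $\phi$ fixes $M$ pointwise. It has the same period as $(X',\mu')$, and it is isomorphic to $(X,\mu)$ in the sense of the functor via the identity map, because the induced map on $M$ is $\phi|_M=\mathrm{id}$.

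The remaining task is to show that two admissibly marked tori $(X,\mu'')$ and $(X',\mu')$ with the same period in $\cD$ are isomorphic compatibly with $M$. \textbf{This is the main obstacle}, since the period determines the torus only up to the sign ambiguity in $\wedge^2$ \cite{Shioda}*{Theorem~II}. By Lemma~\ref{lem:B_to_period}, the point of $\cB$ is determined by the period, so $(X,\mu'')$ and $(X',\mu')$ correspond to the same $w\in\cB$. Since $\cB$ is a fine moduli space, this gives an isomorphism $f\colon X\to X'$ with $\mu'' \circ f^*=\mu'$ on $H^1$. Then $\wedge^2$ of these maps agree, so the triangle in the definition of isomorphism commutes on $M$.

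The sign ambiguity $\pm g$ is harmless: $-\mathrm{id}_L$ induces the identity on $\Lambda$, and it is realized by the automorphism $-1$ of $X$. Thus either choice of $g$ yields isomorphic objects, and the map is injective.
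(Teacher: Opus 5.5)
Your proof is correct and follows the paper's skeleton: the period map, the isometry $\phi=\wedge^2\mu\circ f^*\circ(\wedge^2\mu')^{-1}$ for well-definedness, and trivial surjectivity. You justify the two substantive steps differently, though.

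\textbf{Membership $\phi\in\SO^+(\Lambda)$.} The paper argues via orientations: $\phi$ fixes the positive vector $h\in M$ and carries the oriented positive plane of one period in $\cD_M^+$ to that of the other. You instead note that $\mu\circ f^*\circ\mu'^{-1}\in\mathrm{SL}(L)$ because $f$ preserves the fundamental class. Hence $\phi\in\wedge^2\mathrm{SL}(L)\subseteq\SO^+(\Lambda)$. This is the easy inclusion, which follows from connectedness of $\mathrm{SL}_4(\bR)$, not the lifting part of Shioda's Lemma~1.

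\textbf{Injectivity.} The paper applies Shioda's Torelli theorem for $2$-tori to the Hodge isometry $(\wedge^2\mu_1)^{-1}\circ\phi\circ\wedge^2\mu_2$. You lift $\phi$ to $\mathrm{SL}(L)$ by Shioda's Lemma~1 and re-mark $X$ by the lift. This does not change the class in the functor, since $\phi|_M=\mathrm{id}_M$. After that you need only the fine moduli property of $\cB$ and injectivity of the Pl\"ucker map from Lemma~\ref{lem:B_to_period}.

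\textbf{Comparison.} The paper's route is shorter once Torelli is taken as a black box. Yours uses only facts already set up in Section~\ref{subsec:marked-cplx-tori}. It also shows why no sign change is needed. The paper's alternative ``replacing $\phi$ by $-\phi$'' would conflict with $\phi|_M=\mathrm{id}_M$, and your lift shows that case never arises.

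For the same reason, the step you flag as the main obstacle is not really one. With an admissible marking on $H^1$, Lemma~\ref{lem:B_to_period} already says the period determines the marked torus. The non-uniqueness in Shioda's Theorem~II concerns isometries of $H^2$ of determinant $-1$. These do not come from any map on $H^1$; an example is the one identifying the periods of $X$ and $\widehat{X}$.
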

\begin{proof}
If two $(M, h)$-polarized abelian surfaces $(X_1,\mu_1)$ and $(X_2,\mu_2)$ are isomorphic, then by definition there exists an isomorphism $f\colon X_1\rightarrow X_2$ such that
\begin{equation}
\label{eqn:(M,h)-pol_isom}
    (\wedge^2\mu_1)^{-1}\big|_M
    = f^*\big|_{\NS(X_2)}\circ(\wedge^2\mu_2)^{-1}\big|_M.
\end{equation}
The induced isomorphism
$$
    f^*\colon H^2(X_2,\bC)\longrightarrow H^2(X_1,\bC)
$$
preserves the Hodge structure and therefore maps $H^{2,0}(X_2)$ to $H^{2,0}(X_1)$. In particular, the periods of the two surfaces satisfy
\begin{align*}
    p(X_1,\mu_1)
    &= \wedge^2\mu_1(H^{2,0}(X_1)) \\
    &= \wedge^2\mu_1\circ f^*(H^{2,0}(X_2)) \\
    &= \wedge^2\mu_1\circ f^*
        \circ (\wedge^2\mu_2)^{-1}
        \circ \wedge^2\mu_2(H^{2,0}(X_2)) \\
    &= \wedge^2\mu_1\circ f^*
        \circ (\wedge^2\mu_2)^{-1}(p(X_2,\mu_2)).
\end{align*}
Let us abbreviate this by writing
$$
    p(X_1,\mu_1) = \phi(p(X_2,\mu_2)), \qquad
    \phi = \wedge^2\mu_1\circ f^*
        \circ (\wedge^2\mu_2)^{-1}\in\SO(\Lambda).
$$

Relation~\eqref{eqn:(M,h)-pol_isom} implies that $\phi$ acts trivially on $M$, and hence $\phi\in\Gamma_M$. In particular, $\phi$ fixes one positive direction since $M$ has signature $(1, r-1)$. Moreover, $\phi$ carries the oriented positive plane spanned by the real and imaginary parts of $p(X_2,\mu_2)$ to the corresponding plane given by $p(X_1,\mu_1)$. Thus $\phi$ preserves the orientation of positive $3$-planes in $\Lambda\otimes\bR$, and therefore $\phi\in\Gamma_M^+$. Consequently, sending an abelian surface to its period induces a well-defined map
$$
    \cM_{M,\,h}(\mathrm{Spec}(\bC))
    \longrightarrow
    \cD_M^+/\Gamma_M^+.
$$
This map is surjective, since every point on $\cD_M^+$ arises as the period of an $(M, h)$-polarized abelian surface. It remains to show that the map is injective.

Assume that two $(M, h)$-polarized abelian surfaces $(X_1,\mu_1)$ and $(X_2,\mu_2)$ are mapped to the same point in $\cD_M^+/\Gamma_M^+$. Then
$
    p(X_1,\mu_1) = \phi(p(X_2,\mu_2))
$
for some $\phi\in\Gamma_M^+$. Consider the isometry
$$\xymatrix{
    g\colon H^2(X_2,\bZ)\ar[rr]^-{\wedge^2\mu_2}
    && \Lambda\ar[r]^-\phi
    & \Lambda\ar[rr]^-{(\wedge^2\mu_1)^{-1}}
    && H^2(X_1,\bZ).
}$$
The relation
$
    p(X_1,\mu_1) = \phi(p(X_2,\mu_2))
$
implies that $g$ preserves the Hodge structure. Moreover, since $\phi\in\SO(\Lambda)$, we have $\det(g) = 1$ in the sense of \cite{Shioda}*{Equation~(1.9)}. By \cite{Shioda}*{\S4, Theorem~1}, after replacing $\phi$ by $-\phi$ if necessary, the isometry~$g$ is induced by an isomorphism $f\colon X_1\rightarrow X_2$. Because $\phi$ acts trivially on $M$, the isomorphism $f$ satisfies relation~\eqref{eqn:(M,h)-pol_isom}. Hence $(X_1,\mu_1)$ and $(X_2,\mu_2)$ are isomorphic. This proves injectivity, which completes the proof.
\end{proof}

\begin{defn}
\label{defn:cpx-moduli}
Let $\cD_M^+\subseteq\cD_M$ be the connected component specified in Lemma~\ref{lem:D_M-plus-minus} for a primitive sublattice $M\subseteq\Lambda$ of signature $(1, r-1)$ satisfying Condition~\hyperref[deflem:mirror]{$\diamondsuit$} and a vector $h\in M\otimes\bR$ with $h^2>0$. We define
$$
    \cM_\cpx(M)\colonequals\cD_M^+/\Gamma_M^+.
$$
For an abelian surface $X$, we further define
$$
    \cM_\cpx(X)\colonequals\cM_\cpx(\NS(X)).
$$
\end{defn}

Note that $\cM_\cpx(M)$ is a quasi-projective variety by Baily--Borel \cite{BB66}, whose dimension is equal to $4 - \rank(M)$. In particular, if $X$ is an abelian surface with Picard number $\rho(X)$, then $\cM_\cpx(X)$ is a quasi-projective variety of dimension $4 - \rho(X)$.

\begin{thm}
\label{thm:cplx_moduli}
The quotient
$
    \cM_\cpx(M) = \cD_M^+/\Gamma_M^+
$
defined above is a coarse moduli space for the moduli functor $\cM_{M,\,h}$.
\end{thm}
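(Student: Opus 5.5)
To prove Theorem~\ref{thm:cplx_moduli} I will verify the two defining properties of a coarse moduli space. First, I will construct a natural transformation $\Phi\colon\cM_{M,\,h}\to\operatorname{Hom}(-,\cD_M^+/\Gamma_M^+)$. Second, I will check that $\Phi$ is bijective on $\mathrm{Spec}(\bC)$-points and universal among natural transformations to complex analytic spaces. Bijectivity on points is exactly Lemma~\ref{lem:bij_quotient}, so the remaining content is in the construction of $\Phi$ on families and in universality.

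For a marked family $(\pi\colon\cX\to S,\mu)$ in $\cM_{M,\,h}(S)$, I will use $\mu$ to trivialize $R^2\pi_*\bZ\cong\txtwedge^2R^1\pi_*\bZ\cong\underline{\Lambda}_S$. The fiberwise periods then give a holomorphic period map $S\to\bP(\Lambda\otimes\bC)$, by holomorphicity of the variation of Hodge structure (equivalently, by the fine moduli property of $\cB$ from \cite{BLComplexTori}*{Theorem~7.3.1} composed with the isomorphism of Lemma~\ref{lem:B_to_period}). By Lemma~\ref{lem:D_M}, its image lies in $\cD_M$, and since each fiber is $(M,h)$-polarized, Lemma~\ref{lem:D_M-plus-minus} places it in $\cD_M^+$. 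Composing with the quotient map gives $S\to\cD_M^+/\Gamma_M^+$. This is independent of the choice within the isomorphism class of families: the argument in the proof of Lemma~\ref{lem:bij_quotient}, applied fiberwise, shows that an isomorphism of families changes the period map by an element of $\Gamma_M^+$. That element is locally constant in $s$, because it is determined by integral data, so the composite to the quotient is unchanged. Functoriality under base change is immediate, since periods pull back.

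For universality, let $\Psi\colon\cM_{M,\,h}\to\operatorname{Hom}(-,Z)$ be any natural transformation to a complex analytic space $Z$. I will apply $\Psi$ to the tautological family. Restricting the universal marked torus over $\cB$ to $\cD_M^+$ gives a marked family $\cX_{\cD}\to\cD_M^+$ whose fibers are $(M,h)$-polarized by Lemma~\ref{lem:D_M-plus-minus}, and its period map is the identity. Then $\Psi(\cX_{\cD})\colon\cD_M^+\to Z$ is holomorphic. It is $\Gamma_M^+$-invariant because, for $\phi\in\Gamma_M^+$, the pullback $\phi^*\cX_{\cD}$ is isomorphic, as an $M$-polarized marked family, to $\cX_{\cD}$ after changing the marking. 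Concretely, lifting $\phi$ to an admissible automorphism of $L$ via \cite{Shioda}*{Lemma 1} (up to the sign $\pm$ of Lemma~\ref{lem:bij_quotient}) and using that $\phi|_M=\mathrm{id}$ keeps the diagram defining isomorphism commuting. Hence $\Psi(\cX_{\cD})$ descends to a map $\chi\colon\cD_M^+/\Gamma_M^+\to Z$, holomorphic because the quotient by the properly discontinuous action carries the quotient analytic structure. To show $\Psi=\chi\circ\Phi$ on an arbitrary family over $S$, I will cover $S$ by opens over which the family is locally the pullback of $\cX_{\cD}$ along its period map. This holds on small opens where the fiberwise marked tori are identified via the fine moduli space $\cB$: the family is the pullback of the universal family along $S\to\cB$, whose image lies in $\cD_M^+$. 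Naturality of $\Psi$ then gives agreement locally, hence globally.

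I expect the main obstacle to be the invariance step, namely that an element of $\Gamma_M^+$ acting on the base really does induce an \emph{isomorphism of families} in the functor's sense. This needs a global lift of $\phi$ to an automorphism of $L$ compatible with the marking. The $\pm$ ambiguity in Shioda's theorem means the lift is only determined up to $-1$ on $L$, which acts trivially on $\Lambda$. I will handle this by noting that $-1$ is induced by the inversion automorphism of each torus, so either lift produces an isomorphic marked family. A related subtle point is that the fine moduli property of $\cB$ is stated for marked tori of the given type, and I will check that the restriction of the universal family to $\cD_M^+$ consists of abelian surfaces, which Lemma~\ref{lem:D_M-plus-minus} guarantees via the ample class $h$.
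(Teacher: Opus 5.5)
Your proposal is correct and follows essentially the same route as the paper: classify a family by its map to $\cB$ (equivalently its period map) landing in $\cD_M^+$ and compose with the quotient map; get bijectivity on points from Lemma~\ref{lem:bij_quotient}; and get universality by applying the given transformation to the tautological family over $\cD_M^+$, then descending, using that pullback by $\phi\in\Gamma_M^+$ preserves its isomorphism class. You also supply details the paper leaves to the reader, namely independence of the classifying map from the choice within an isomorphism class, and the explicit isomorphism $\phi^*\cX_{\cD}\cong\cX_{\cD}$ obtained by re-marking via a lift of $\phi\in\Gamma_M^+\subseteq\SO^+(\Lambda)$ to $\mathrm{SL}(L)$. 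Two small points: the only ambiguity in that lift is $\pm\mathrm{id}_L$, not the $\pm\phi$ sign from Lemma~\ref{lem:bij_quotient}; and you should also record that $\chi$ is unique, which is immediate because the quotient map is surjective.
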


\begin{proof}
We begin by constructing a natural transformation of functors
$$
    \eta\colon\cM_{M,\,h}(-)
    \longrightarrow
    \Hom(-,\,\cD_M^+/\Gamma_M^+).
$$
Let $(\pi\colon\cX\rightarrow S)\in\cM_{M,\,h}(S)$ be a family of $(M, h)$-polarized abelian surfaces. Since $\cB$ is a fine moduli space of marked complex tori, this family is the pullback of the universal family over $\cB$ along a unique morphism $f\colon S\rightarrow\cB$. Moreover, since all fibers are $(M, h)$-polarized, the image of $f$ lies in $\cD_M^+\subseteq\cB$. Thus for each base space $S$, we obtain a map
$$
    \eta_S\colon\cM_{M,\,h}(S)
    \longrightarrow
    \Hom(S,\,\cD_M^+/\Gamma_M^+)
$$
sending a family $(\pi\colon\cX\rightarrow S)$ to the composition
$$\xymatrix{
    S\ar[r]^-f
    & \cD_M^+\ar[r]
    & \cD_M^+/\Gamma_M^+.
}$$
This construction is functorial in $S$, providing the desired natural transformation.

To prove that $\cD_M^+/\Gamma_M^+$ is a coarse moduli space, we need to verify that the natural transformation $\eta$ satisfies the following properties:
\begin{enumerate}[label=(\roman*)]
    \item The following map is bijective:
    $$
        \eta_{\Spec(\bC)}\colon
        \cM_{M,\,h}(\Spec(\bC))
        \longrightarrow
        \Hom(\Spec(\bC),\,\cD_M^+/\Gamma_M^+)
    $$

    \item For every natural transformation
    $
        \tau\colon\cM_{M,\,h}(-)
        \longrightarrow
        \Hom(-,\,N),
    $
    there exists a unique morphism
    $
        \psi\colon\cD_M^+/\Gamma_M^+
        \longrightarrow N
    $
    such that the following diagram commutes:
    \begin{equation}
    \label{eqn:univ_prop}
    \vcenter{\xymatrix{
        \cM_{M,\,h}(-)\ar[r]^-\eta\ar[dr]_-\tau
        & \Hom(-,\,\cD_M^+/\Gamma_M^+)\ar[d]^-{\psi\,\circ\,-} \\
        & \Hom(-,\,N)
    }}
    \end{equation}
\end{enumerate}
The first property is established in Lemma~\ref{lem:bij_quotient}, so it remains to verify the second.

More specifically, we need to construct the morphism $\psi$. To this end, consider the family of $(M, h)$-polarized abelian surfaces
$
    \pi_M\colon\mathcal{U}_M^+
    \rightarrow\cD_M^+
$
obtained by the pullback of the universal family over $\cB$. Applying $\tau$ to $\pi_M$ yields a morphism
$
    \psi'\colon\cD_M^+
    \rightarrow N.
$
Every isometry $\phi\in\Gamma_M^+$ corresponds to an automorphism $\phi\colon\cD_M^+\rightarrow\cD_M^+$, giving rise to the commutative diagram
$$\xymatrix@C=5em{
    \cM_{M,\,h}(\cD_M^+)
        \ar[d]_-{\phi^*}
        \ar[r]^-{\tau_{\cD_M^+}}
    & \Hom(\cD_M^+, N)
        \ar[d]^-{\phi\,\circ\,-} \\
    \cM_{M,\,h}(\cD_M^+)
        \ar[r]^-{\tau_{\cD_M^+}}
    & \Hom(\cD_M^+, N).
}$$
Since $\phi^*$ leaves the isomorphism class of $\pi_M$ invariant, it follows that $\phi\circ\psi' = \psi'$ from the diagram. Hence, $\psi'$ is invariant under the action of $\Gamma_M^+$, and consequently descends to a morphism
$
    \psi\colon\cD_M^+/\Gamma_M^+
    \rightarrow N.
$
It is straightforward to verify that diagram~\eqref{eqn:univ_prop} commutes and that $\psi$ is uniquely determined. The details are left to the reader.
\end{proof}

\begin{rmk}
As pointed out by Alexeev and Engel \cite{AE25}, \cite{Dol96}*{Theorem~3.1} is incorrect under Dolgachev’s original definition of lattice-polarized K3 surfaces, which was corrected in \cite{AE25}. This issue does not arise for abelian surfaces, since they contain no $(-2)$-curves, and hence their ample cones coincide with positive cones.
\end{rmk}

\section{Mirror pairs of abelian surfaces}
\label{sec:mirror-pair}

Mirror symmetry naturally identifies the complex moduli space of an abelian surface with the stringy K\"ahler moduli space of its mirror partner. In this section, we construct the stringy K\"ahler moduli space via Bridgeland stability conditions, paralleling Bayer and Bridgeland's construction for K3~surfaces \cite{BB17}*{Section~7}, and then establish this identification. We next introduce an involution on the stringy K\"ahler moduli space which, assuming the existence of a mirror partner, recovers the involution on the complex moduli space sending an abelian surface to its dual. Finally, we develop criteria for existence of mirror partners and for determining when two abelian surfaces form a mirror pair.

\subsection{Stringy K\"ahler moduli spaces}
\label{subsec:stringy-kah-moduli_AbS}

We begin by recalling the necessary background on Bridgeland stability conditions for abelian surfaces. For general definitions and further details, the reader is referred to \cite{Bri08}*{Section~15}, or more recent reference such as \cite{FLZ22}*{Section~2}.

Consider an abelian surface $X$ and denote by $\Db(X)$ the bounded derived category of coherent sheaves on it. A \emph{numerical stability condition} on $\Db(X)$ is a pair $\sigma = (Z,\sP)$ which consists of the following data.
\begin{itemize}
\item[] \textbf{Central charge:} A group homomorphism
$$
    Z\colon K_0(X)\longrightarrow\bC,
$$
with $K_0(X)$ denoting the Grothendieck group of $\Db(X)$, factoring through the Mukai vector $v\colon K_0(X)\longrightarrow N(X)$. It can be expressed in the form
$
    Z(-) = (\gamma_Z, v(-))
$
for some $\gamma_Z\in N(X)\otimes\bC$.
\item[] \textbf{Slicing:} A collection of full additive subcategories $\sP(\theta)\subseteq\Db(X)$, one for each real number $\theta$, subject to the following conditions:
\begin{itemize}
	\item[$\bullet$] $\sP(\theta+1) = \sP(\theta)[1]$,
	\item[$\bullet$] $\Hom(A_1, A_2) = 0$ for all $A_i\in \sP(\theta_i)$, $i=1,2$, with $\theta_1>\theta_2$, and
	\item[$\bullet$] every nonzero object $E\in\Db(X)$ can be filtered into exact triangles
    $$\xymatrix@C=.5em{
        & 0 \ar[rrrr] &&&& E_1 \ar[rrrr] \ar[dll] &&&& E_2
        \ar[rr] \ar[dll] && \cdots \ar[rr] && E_{n-1}
        \ar[rrrr] &&&& E \ar[dll]  &  \\
        &&& A_1 \ar@{-->}[ull] &&&& A_2 \ar@{-->}[ull] &&&&&&&& A_n \ar@{-->}[ull] 
    }$$
	where $A_i\in \sP(\theta_i)$ and $\theta_1>\theta_2>\cdots>\theta_n$.
\end{itemize}
An object of $\sP(\theta)$ is said to be \emph{semistable of phase $\theta$.}
\end{itemize}
The central charge and slicing are required to be compatible in the sense that, for every nonzero object $E\in\sP(\theta)$, its central charge satisfies $Z(E)\in\bR_{>0}\,e^{i\pi\theta}$. In addition, the central charge needs to satisfy the following property.
\begin{itemize}
    \item[] \textbf{Support property:} There exists a quadratic form $Q$ on $N(X)\otimes\bR$ that is negative-definite on the subspace where $Z$ vanishes and $Q(v(E))\geq 0$ for every semistable $E$.
\end{itemize}

The set of numerical stability conditions $\Stab(X)$ carries the structure of a complex manifold, which a priori may have several components. For abelian surfaces, since there is no spherical object, the space $\Stab(X)$ is connected and simply connected due to \cite{HMS08}*{Theorem~3.15}. Within this space, the \emph{reduced} stability conditions, namely those whose central charges $Z(-) = (\gamma_Z, v(-))$ satisfy $\gamma_Z\cdot\gamma_Z = 0$, form a submanifold
$$
    \Stab_\red(X)
    \subseteq\Stab(X).
$$
The space $\Stab(X)$ carries a right action of $\smash{\widetilde{\GL}_2^+}(\bR)$, the universal cover of $\GL_2^+(\bR)$, and a left action of $\Aut\Db(X)$, with the two actions commuting. Under these actions, $\Stab_\red(X)$ is preserved by the subgroup
$
    \bC\subseteq
    \smash{\widetilde{\GL}_2^+}(\bR)
$
and by the entire $\Aut\Db(X)$. We denote by
$$
    \Aut_0\Db(X)\subseteq\Aut\Db(X).
$$
the subgroup of autoequivalences acting trivially on $\Stab(X)$. It is generated by tensoring with degree-zero line bundles and by pullbacks of automorphisms of $X$ acting trivially on the total cohomology $H^*(X,\bZ)$.

An autoequivalence is called \emph{Calabi--Yau} if the induced Hodge isometry on $H^{\rm ev}(X, \bZ)$ acts trivially on the transcendental lattice $T(X)$, or equivalently, if it respects the Serre duality pairing \cite{BB17}*{Definition~7.1 and Appendix}. Such autoequivalences form a subgroup
$$
    \Aut_\CY\Db(X)\subseteq\Aut\Db(X).
$$
Note that $\Aut_0\Db(X)\subseteq\Aut_\CY\Db(X)$. We denote the corresponding quotient by
$$
    \overline{\Aut}_\CY\Db(X)
    \colonequals
    \Aut_\CY\Db(X) / \Aut_0\Db(X).
$$

\begin{defn}
\label{defn:stringy_Kah_mod}
The stringy K\"ahler moduli space of an abelian surface $X$ is defined as
$$
    \cM_\Kah(X)\colonequals\left(
        \Stab_\red(X)/\bC
    \right) \big/ \left(\,
        \overline{\Aut}_\CY\Db(X)/\bZ[2]
    \,\right).
$$
\end{defn}

The space $\cM_\Kah(X)$ admits a more concrete description. To a stability condition with central charge $Z(-) = (\gamma_Z, v(-))$, one can associate a class
$
    [\gamma_Z]\in\bP(N(X)\otimes\bC).
$
As a consequence of \cite{Bri08}*{Theorem~15.2}, this assignment induces an isomorphism
\begin{equation}
\label{eqn:covering}
\xymatrix{
    \Stab_\red(X)/\bC
    \ar[r]^-\sim &
    \cQ^+(X)
    \subseteq\bP(N(X)\otimes\bC).
}\end{equation}
Here, $\cQ^+(X)$ denotes the connected component of the period domain
$$
    \cQ(X)\colonequals\{
        [\gamma]\in\bP(N(X)\otimes\bC)
    \mid
        \gamma\cdot\gamma = 0,\;
        \gamma\cdot\overline{\gamma} > 0
    \}
$$
containing the class $\left[e^{ih} = 1 + ih - \frac{1}{2}h^2\right]$ for some ample $h\in\NS(X)\otimes\bR$. This component is preserved by the action of $\SO^+(N(X))^*$, the special orthogonal group of $N(X)$ preserving the orientation of positive $2$-planes in $N(X)\otimes\bR$ and acting trivially on the discriminant group $N(X)^*/N(X)$.

\begin{prop}
\label{prop:str-kah-moduli}
For an abelian surface $X$, there is a natural isomorphism
$$
    \cM_\Kah(X)\cong
    \cQ^+(X) \,/ \,\SO^+(N(X))^*.
$$
\end{prop}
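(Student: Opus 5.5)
The plan is to start from the isomorphism \eqref{eqn:covering}, $\Stab_\red(X)/\bC\cong\cQ^+(X)$, and check that it is equivariant for a group homomorphism
$$
    \overline{\Aut}_\CY\Db(X)/\bZ[2]\longrightarrow\SO^+(N(X))^*.
$$
I will then show that this homomorphism is an isomorphism, or at least that it is surjective with kernel acting trivially on $\cQ^+(X)$. Passing to quotients then gives the proposition.

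For the homomorphism, each autoequivalence $\Phi$ acts on $H^{\rm ev}(X,\bZ)$ by a Hodge isometry $\Phi^H$. Its restriction to $N(X)$ is an isometry, and the central charge transforms by $\gamma_Z\mapsto\Phi^H(\gamma_Z)$. For $\Phi$ Calabi--Yau, $\Phi^H$ is trivial on $T(X)$. Since $N(X)$ and $T(X)$ are mutually orthogonal primitive sublattices of the unimodular lattice $H^{\rm ev}(X,\bZ)\cong U^{\oplus4}$, the gluing isomorphism $N(X)^*/N(X)\cong T(X)^*/T(X)$ forces $\Phi^H|_{N(X)}$ to act trivially on the discriminant group. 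Since $\Phi$ preserves $\Stab(X)$, hence the component $\cQ^+(X)$, it preserves the orientation of positive $2$-planes. Its determinant is $1$: the Hodge isometry induced by a derived equivalence of abelian surfaces is orientation preserving on positive $4$-planes, and since it is the identity on $T(X)$ the determinant of $\Phi^H|_{N(X)}$ equals $\det\Phi^H=1$. So we obtain a map into $\SO^+(N(X))^*$. The shift $[1]$ acts by $-\mathrm{id}$, which is the element $1\in\bC$ acting on $\Stab_\red/\bC$ trivially; so $\bZ[2]$, acting by $+\mathrm{id}$ on cohomology and by a $\bC$-translation on stability conditions, acts trivially on $\Stab_\red(X)/\bC$. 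This explains the quotient by $\bZ[2]$.

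For surjectivity, take $g\in\SO^+(N(X))^*$. Because $g$ acts trivially on $N(X)^*/N(X)$, Nikulin's gluing \cite{Nik79}*{Corollary~1.5.2} lets $g\oplus\mathrm{id}_{T(X)}$ extend to an isometry $\tilde g$ of $H^{\rm ev}(X,\bZ)$. It is a Hodge isometry since it fixes $T(X)\supseteq H^{2,0}$, and it is orientation preserving. Orlov's and Mukai's theorem for abelian varieties realizes orientation-preserving Hodge isometries of $H^{\rm ev}$ (more precisely, of $H^{\rm ev}$ coming from $\mathrm{SO}$-type isometries of $H^1\oplus H^1$ of $X\times\widehat X$; for surfaces, every orientation-preserving Hodge isometry of the Mukai lattice lifts up to sign) by autoequivalences $\Phi$. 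Then $\Phi$ is Calabi--Yau and maps to $g$, possibly up to $\pm$. The sign is handled by composing with $[1]$.

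For the kernel, suppose $\Phi$ acts trivially on $N(X)$. I want it to act trivially on $\Stab_\red(X)/\bC$. Since $\Stab(X)$ is connected and simply connected and the covering map $\Stab(X)\to\cP^+(X)$ given by the central charge is a Galois covering (\cite{Bri08}*{Theorem~15.2}, \cite{HMS08}), its deck group consists of autoequivalences acting trivially on cohomology. For abelian surfaces, by the simple connectivity of $\Stab(X)$ (\cite{HMS08}) the covering $\Stab(X)\to\cP^+(X)$ is the universal cover, with deck group $\bZ$ generated by $[2]$. So an autoequivalence acting trivially on $N(X)$ acts on $\Stab(X)$ as an even shift modulo $\Aut_0\Db(X)$. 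This is the main point to verify carefully, using \cite{HMS08}*{Theorem~3.15} together with the structure of $\Aut\Db(X)$ from Orlov. The upshot is that the kernel of $\overline{\Aut}_\CY\Db(X)/\bZ[2]\to\SO^+(N(X))^*$ acts trivially, giving the natural isomorphism of quotients.
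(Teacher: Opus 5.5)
Your route is the paper's. You restrict the cohomological action of Calabi--Yau autoequivalences to $N(X)$ and get triviality on $N(X)^*/N(X)$ from triviality on $T(X)$ by gluing. You get surjectivity by extending $g\oplus\mathrm{id}_{T(X)}$ to $H^{\rm ev}(X,\bZ)$ and lifting the extension to an autoequivalence. Your deck-transformation argument is what underlies the paper's assertion that the kernel of $\Aut\Db(X)\to\uO_{\rm Hdg}(H^{\rm ev}(X,\bZ))$ is $\Aut_0\Db(X)\times\bZ[2]$.

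The one input you must repair is the statement about the cohomological image. It is exactly $\SO^+_{\rm Hdg}(H^{\rm ev}(X,\bZ))$ \cite{Yos09}*{Proposition~4.5}, and you need both halves of this: determinant one to land in $\SO^+(N(X))^*$, and the lifting of every such isometry for surjectivity. Determinant one does not follow from orientation-preservation, as your argument suggests.

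Your parenthetical claim that every orientation-preserving Hodge isometry of the Mukai lattice lifts up to sign is false. Take the reflection in $(1,0,1)\in N(X)$, namely $(r,D,s)\mapsto(-s,D,-r)$, extended by $\mathrm{id}_{T(X)}$. It is an orientation-preserving Hodge isometry of determinant $-1$. So is its negative, since the lattice has rank~$8$. It is not induced by any autoequivalence; see Lemma~\ref{lem:double-cover} and the remark at the end of Section~\ref{subsec:mirror-symp-dual}.

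As written, your two claims contradict each other. That discrepancy is exactly the index-two difference between $\SO^+(N(X))^*$ and $\uO^+(N(X))^*$, which is what distinguishes abelian surfaces from K3~surfaces here. With the correct statement, $\tilde g$ lifts on the nose. If you allow a lift with $\Phi^H=-\tilde g$, then $\Phi$ acts by $-1$ on $T(X)$ and is not Calabi--Yau; the Calabi--Yau lift is $\Phi[1]$, not $\Phi$ itself.

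The step you flag as the main point is not needed for the proposition as stated. The map \eqref{eqn:covering} is an equivariant bijection. Hence anything acting trivially on $N(X)$ automatically acts trivially on $\Stab_\red(X)/\bC$, and surjectivity alone identifies the two quotients. Your deck-transformation argument, which is correct, is only needed to upgrade this to the group isomorphism $\overline{\Aut}_\CY\Db(X)/\bZ[2]\cong\SO^+(N(X))^*$ that the paper also proves.
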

\begin{proof}
There is an isomorphism $\Stab_\red(X)/\bC\cong\cQ^+(X)$ from \eqref{eqn:covering}. It remains to show that
\begin{equation}
\label{eqn:Aut_CY-to-SO+}
     \overline{\Aut}_\CY\Db(X)/\bZ[2]
     \cong
     \SO^+(N(X))^*.
\end{equation}
Consider the homomorphism
$$
    \Aut\Db(X)
    \longrightarrow
    \uO_{\rm Hdg}(H^{\rm ev}(X,\bZ))
$$
induced by the actions of autoequivalences on the even cohomology. Its kernel is isomorphic to $\Aut_0\Db(X)\times\bZ[2]$. By \cite{Yos09}*{Proposition~4.5}, its image is
$
    \SO_{\rm Hdg}^+(H^{\rm ev}(X,\bZ)),
$
the group of Hodge isometries of $H^{\rm ev}(X,\bZ)$ with determinant~$1$ that preserve the orientation of positive $4$-planes in $H^{\rm ev}(X,\bR)$.

Restricting the above homomorphism to $\Aut_\CY\Db(X)$ yields the identification
$$
    \mathrm{Im}\left(
        \Aut_\CY\Db(X)
        \rightarrow
        \uO_{\rm Hdg}(H^{\rm ev}(X,\bZ))
    \right) = \left\{
        \phi\in\SO_{\rm Hdg}^+(H^{\rm ev}(X,\bZ))
    \mid
        \phi|_{T(X)} = \mathrm{id}_{T(X)}
    \right\}.
$$
The left-hand side equals $\overline{\Aut}_\CY\Db(X)/\bZ[2]$. Moreover, restricting a Hodge isometry of the even cohomology to $N(X)$ defines an injection
$$
    \left\{
        \phi\in\SO_{\rm Hdg}^+(H^{\rm ev}(X,\bZ))
    \mid
        \phi|_{T(X)} = \mathrm{id}_{T(X)}
    \right\}
    \lhook\joinrel\longrightarrow
    \SO^+(N(X))^*.
$$
This map is surjective, and hence an isomorphism, since every element of $\SO^+(N(X))^*$ can be extended to $H^{\rm ev}(X,\bZ)$ acting trivially on $T(X)$; see, for example, \cite{Huy16}*{Proposition~14.2.6}. This establishes the desired isomorphism~\eqref{eqn:Aut_CY-to-SO+}.
\end{proof}

Note that for an abelian surface $X$ with Picard number $\rho(X)$, the space $\cM_\Kah(X)$ is a quasi-projective variety of dimension $\rho(X)$ by Proposition~\ref{prop:str-kah-moduli} and Baily--Borel \cite{BB66}.

\begin{rmk}
\label{rmk:orbifold}
As complex varieties, one can replace $\overline{\Aut}_\CY\Db(X)$ with $\Aut_\CY\Db(X)$ in Definition~\ref{defn:stringy_Kah_mod} and write
$$
    \cM_\Kah(X) = \left(
        \Stab_\red(X)/\bC
    \right) \big/ \left(\,
        \Aut_\CY\Db(X)/\bZ[2]
    \,\right),
$$
making it compatible with Bayer and Bridgeland's corresponding definition for K3~surfaces \cite{BB17}*{Section~7}. However, this formulation is not suitable for abelian surfaces if one wants to regard $\cM_\Kah(X)$ as an orbifold, since the subgroup $\Aut_0\Db(X)\subseteq\Aut_\CY\Db(X)$ that fixes all stability conditions is non-discrete.
\end{rmk}

\subsection{Mirror pairs and symplectic duals}
\label{subsec:mirror-symp-dual}

We now show that, if an abelian surface $X$ admits a mirror partner $Y$, then there exist natural isomorphisms
$$
    \cM_\cpx(X)\cong\cM_\Kah(Y)
    \qquad\text{and}\qquad
    \cM_\Kah(X)\cong\cM_\cpx(Y).
$$

\begin{proof}[Proof of Theorem~\ref{thm:mirror-moduli}]
Recall from Definition~\ref{defn:cpx-moduli} and the beginning of Section~\ref{subsec:lattice-pol-AbS} that
$$
    \cM_\cpx(X) = \cD_{\NS(X)}^+/\Gamma_{\NS(X)}^+
$$
where $\cD_{\NS(X)}^+$ is one of the connected components of
\begin{align*}
    \cD_{\NS(X)} &= \left\{
        [v]\in\bP\left(\NS(X)^{\perp\Lambda}\otimes\bC\right)
    \;\middle|\;
        v\cdot v = 0,\; v\cdot\overline{v} > 0
    \right\} \\
    &\cong \left\{
        [v]\in\bP\left(T(X)\otimes\bC\right)
    \;\middle|\;
        v\cdot v = 0,\; v\cdot\overline{v} > 0
    \right\}
\end{align*}
and
$$
    \Gamma_{\NS(X)}^+ = \{
        \phi\in\SO^+(\Lambda)
    \mid
        \phi|_{\NS(X)} = \mathrm{id}_{\NS(X)}
    \}.
$$
On the other hand, Proposition~\ref{prop:str-kah-moduli} shows that
$$
    \cM_\Kah(Y)\cong
    \cQ^+(Y) \,/ \,\SO^+(N(Y))^*
$$
where $\cQ^+(Y)$ is one of the connected components of
$$
    \cQ(Y) = \{
        \gamma\in\bP(N(Y)\otimes\bC)
    \mid
        \gamma\cdot\gamma = 0,\;
        \gamma\cdot\overline{\gamma} > 0
    \}.
$$
and $\SO^+(N(Y))^*$ is the group of isometries of $N(Y)$ with determinant~$1$ that preserve the orientation of positive $2$-planes in $N(Y)\otimes\bR$ and act trivially on $N(Y)^*/N(Y)$.

Since $Y$ is a mirror partner of $X$, there is an isometry $T(X)\cong N(Y)$, which in turn induces an isomorphism
\begin{equation}
\label{eqn:isom-period-domain}
\xymatrix{
    \cD_{\NS(X)}^+
    \ar[r]^-\sim &
    \cQ^+(Y).
}
\end{equation}
Moreover, every element of $\Gamma_{\NS(X)}^+$ acts on $\NS(X)^{\perp\Lambda}\cong T(X)$, which defines an injection
$$
    \Gamma_{\NS(X)}^+
    \lhook\joinrel\longrightarrow
    \SO^+(T(X))^*.
$$
This map is surjective, and hence an isomorphism, since every element of $\SO^+(T(X))^*$ can be extended to $\Lambda$ acting trivially on $\NS(X)$ \cite{Huy16}*{Proposition~14.2.6}. Combining this with the isometry $T(X)\cong N(Y)$ yields an isomorphism
$$\xymatrix{
    \Gamma_{\NS(X)}^+
    \ar[r]^-\sim &
    \SO^+(N(Y))^*
}$$
such that map~\eqref{eqn:isom-period-domain} is equivariant with respect to it. This establishes the isomorphism
$$
    \cM_\cpx(X)\cong\cM_\Kah(Y).
$$
The other isomorphism $\cM_\Kah(X)\cong\cM_\cpx(Y)$ is induced by the isometry $N(X)
\cong T(Y)$ in a similar way.
\end{proof}

Let $X$ be an abelian surface $X$ and consider the set of complexified K\"ahler classes
$$
    \cK(X)\colonequals\left\{
        \omega\in\NS(X)\otimes\bC
    \;\middle|\;
        \mathrm{Im}(\omega)\in\Amp(X)
    \right\}.
$$
Taking symplectic duals defines an involution
$$
    \iota\colon\cK(X)\longrightarrow\cK(X)
    :\omega\longmapsto\frac{\omega}{-\frac{1}{2}\omega^2}
$$
that reverses the K\"ahler volume. On the other hand, the exponential map defines an isomorphism
$$
    \cK(X)\longrightarrow\cQ^+(X)
    :\omega\longmapsto\left[
        \exp(\omega) = 1 + \omega + \frac{1}{2}\omega^2
    \right].
$$
Via this identification, the involution $\iota$ induces an involution on $\cQ^+(X)$. Let us prove that this descends to an involution on the stringy K\"ahler moduli space
$$
    \cM_\Kah(X)\cong\cQ^+(X) \,/ \,\SO^+(N(X))^*
$$
and that, when $X$ admits a mirror partner $Y$, the induced involution on $\cM_\cpx(Y)$ pairs $\NS(Y)$-polarized abelian surfaces whose underlying abelian surfaces are dual to each other.

\begin{lem}
\label{lem:double-cover}
The involution $\iota$ induces an involution on $\cM_\Kah(X)$ given by the action of an involution in
$
    \uO^+(N(X))^*
    \setminus
    \SO^+(N(X))^*,
$
which in turn corresponds to the covering involution of the double cover
$$\xymatrix{
    \cM_\Kah(X)\cong
    \cQ^+(X) \,/ \,\SO^+(N(X))^*
    \ar[r]^-{2:1}
    & \cQ^+(X) \,/ \,\uO^+(N(X))^*.
}$$
\end{lem}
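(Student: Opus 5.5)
The key observation is that, under the identification $\cK(X)\cong\cQ^+(X)$ given by $\omega\mapsto[\exp(\omega)]$, the involution $\iota$ becomes the restriction of an explicit lattice isometry. Write $N(X)=H^0\oplus\NS(X)\oplus H^4$ with coordinates $(r,D,s)$, so that $\exp(\omega)=(1,\omega,\tfrac12\omega^2)$. Let $\tau\colon N(X)\to N(X)$ be the map $(r,D,s)\mapsto(s,D,r)$, i.e.\ $-1$ times the swap on the $U$-summand twisted appropriately; concretely, I would take
$$
    \tau(r,D,s)=(-s,D,-r).
$$
The first step is a direct computation:
$$
    \tau\big(1,\omega,\tfrac12\omega^2\big)=\big(-\tfrac12\omega^2,\omega,-1\big),
$$
which is proportional to $\big(1,\tfrac{\omega}{-\frac12\omega^2},\tfrac12(\tfrac{\omega}{-\frac12\omega^2})^2\big)=\exp(\iota(\omega))$. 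Indeed, the last coordinate is $\tfrac12\omega^2/(\tfrac14(\omega^2)^2)=2/\omega^2$, matching $-1/(-\tfrac12\omega^2)$. Hence $[\exp(\iota\omega)]=\tau[\exp\omega]$. The Mukai pairing $DD'-rs'-sr'$ is preserved by $\tau$, so $\tau\in\uO(N(X))$. Along the way I would record that $\iota(\omega)$ lies in $\cK(X)$ and that $\vol(\iota\omega)=\vol(\omega)^{-1}$; both follow once $\tau$ is known to preserve $\cQ^+(X)$.

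Next I would check the properties of $\tau$. It acts trivially on $N(X)^*/N(X)$, because $U$ is unimodular and $\tau$ is the identity on $\NS(X)$. Its determinant equals the determinant of $\begin{pmatrix}0&-1\\-1&0\end{pmatrix}$, which is $-1$, so $\tau\notin\SO(N(X))$. To see that $\tau\in\uO^+(N(X))$, that is, that it preserves the orientation of positive $2$-planes, it suffices to note that it maps $\cQ^+(X)$ into itself. Since $\tau$ sends $\exp(ih)$ to a multiple of $\exp(i h/(\tfrac12h^2))$, which lies in $\cQ^+(X)$, and $\tau$ maps connected components to connected components, this follows. Finally, $\tau^2=\mathrm{id}$.

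Since $\SO^+(N(X))^*$ is normal of index two in $\uO^+(N(X))^*=\SO^+(N(X))^*\sqcup\tau\,\SO^+(N(X))^*$, the action of $\tau$ on $\cQ^+(X)$ descends to $\cQ^+(X)/\SO^+(N(X))^*\cong\cM_\Kah(X)$ by Proposition~\ref{prop:str-kah-moduli}. It is an involution there, and the quotient by it is $\cQ^+(X)/\uO^+(N(X))^*$, which is the covering involution of the stated $2:1$ map. The step I expect to need the most care is the bookkeeping of sign and orientation conventions: confirming that $\tau$, rather than $-\tau$, preserves $\cQ^+$, and that the $\exp$ identification matches the component singled out in \eqref{eqn:covering}. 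The evaluation at $\omega=ih$ described above settles this.
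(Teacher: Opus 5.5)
Your proposal is correct and follows the same route as the paper. Both identify $\iota$, via $\omega\mapsto[\exp(\omega)]$, with the isometry $\tau(r,D,s)=(-s,D,-r)$, which fixes $\NS(X)$ and acts as minus the swap on $H^0\oplus H^4$, and both conclude from $\det\tau=-1$. Your version is also more complete than the paper's: you explicitly check that $\tau$ acts trivially on $N(X)^*/N(X)$ and that it preserves $\cQ^+(X)$ (via $\tau[\exp(ih)]=[\exp(2ih/h^2)]$), which is what places $\tau$ in $\uO^+(N(X))^*$ rather than merely in $\uO(N(X))$.
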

\begin{proof}
Let us denote a vector in $N(X)$ as $(r, D, s)$ with respect to the decomposition
$$
    N(X) = H^0(X,\bZ)\oplus \NS(X)\oplus H^4(X,\bZ)
$$
With this notation, the involution on $\cQ^+(X)$ induced by $\iota$ maps
$$
    [\exp(\omega)]
    = \left[\left(
        1, \omega, \frac{1}{2}\omega^2
    \right)\right]
    \in\bP(N(X)\otimes\bC)
$$
to the class
$$
    \left[
        \exp\left(
            \frac{\omega}{-\frac{1}{2}\omega^2}
        \right)
    \right] = \left[\left(
        1, \frac{\omega}{-\frac{1}{2}\omega^2}, \frac{2}{\omega^2}
    \right)\right] = \left[\left(
        -\frac{1}{2}\omega^2, \omega, -1
    \right)\right].
$$
This involution corresponds to the isometry on $N(X)$ that
\begin{itemize}
\item acts trivially on $\NS(X)$, and
\item swaps the two summands of $H^0(X, \bZ)\oplus H^4(X, \bZ) = U(-1)$ followed by $-\mathrm{id}$.
\end{itemize}
This isometry has determinant~$-1$, so it lies in
$
    \uO^+(N(X))^*
    \setminus
    \SO^+(N(X))^*.
$
Thus the statement follows.
\end{proof}

\begin{proof}[Proof of Theorem~\ref{thm:symp-dual}]
Let $Y$ be a mirror partner of $X$. Fix an isometry $T(Y)\cong N(X)$ and consider the induced isomorphism
$$
    \mathscr{M}\colon\cM_\cpx(Y)\longrightarrow\cM_{\Kah}(X).
$$
By Lemma~\ref{lem:double-cover}, the involution $\iota$ induces an involution
$$
    \sD\colon\cM_\Kah(X)\longrightarrow\cM_\Kah(X)
$$
such that the composition
$$
    \mathscr{M}^{-1}\circ\sD\circ\mathscr{M}
    \colon\cM_\cpx(Y)\longrightarrow\cM_\cpx(Y)
$$
is given by the action of an involution in 
$
    \uO^+(T(Y))^*
    \setminus
    \SO^+(T(Y))^*.
$
As a consequence of \cite{Shioda}*{Theorem~2}, such an isometry pairs $\NS(Y)$-polarized abelian surfaces whose underlying abelian surfaces are dual to each other. This completes the proof.
\end{proof}

For an $(M, h)$-polarized abelian surface $(X, \mu)$, it is not a priori clear whether the dual abelian surface
$$
    \widehat{X}\colonequals H^1(X,\cO_X)/H^1(X,\bZ)
$$
admits a marking $\widehat{\mu}$ such that $(\widehat{X}, \widehat{\mu})$ is again $(M, h)$-polarized. Theorem~\ref{thm:symp-dual}~\ref{symp-dual:mirror-dual} implies that this is indeed the case when $(X, \mu)\in\cM_\cpx(Y)$ for some abelian surface $Y$ admitting a mirror partner. In what follows, we we prove that the same conclusion holds whenever $M$ satisfies Condition~\hyperref[deflem:mirror]{$\diamondsuit$}.

\begin{cor}
\label{cor:dual-invol-on-M_cpx}
Let $M\subseteq\Lambda$ be a primitive sublattice of signature $(1,r-1)$ satisfying Condition~\hyperref[deflem:mirror]{$\diamondsuit$}, and fix a vector $h\in M\otimes\bR$ with $h^2>0$. Then $\cM_\cpx(M)$ admits an involution that pairs $(M, h)$-polarized abelian surfaces whose underlying abelian surfaces are dual to each other.
\end{cor}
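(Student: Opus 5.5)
The plan is to reduce to Theorem~\ref{thm:symp-dual}~\ref{symp-dual:mirror-dual}, or more precisely to its lattice-theoretic mechanism, by realizing $M$ as the Néron--Severi lattice of an abelian surface that has a mirror partner. Since $M$ satisfies Condition~\hyperref[deflem:mirror]{$\diamondsuit$}, Definition-Lemma~\ref{deflem:mirror}~\ref{mirror:pair} gives a primitive sublattice $N\subseteq\Lambda$ with $M^\perp = N\oplus U$ and $N^\perp = M\oplus U$. Thus $M^\perp\cong N\oplus U$ is abstractly of the form ``numerical Grothendieck lattice'' of a lattice $N$ of signature $(1,\ast)$. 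Rather than produce an actual surface with $\NS\cong N$, which would require a surjectivity-of-periods argument at a very general point of $\cD_N$, I would run the argument of Lemma~\ref{lem:double-cover} directly on $M^\perp\cong N\oplus U$.

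\textbf{Step 1.} Fix the identification $M^\perp = N\oplus U$ and let $\tau\in\uO(M^\perp)$ act trivially on $N$ and by $-$(swap) on $U$. This is the same isometry as in Lemma~\ref{lem:double-cover}, with $U(-1)$ replaced by $U$. It has determinant $-1$, acts trivially on the discriminant group $A_{M^\perp}=A_N$, and preserves the orientation of positive $2$-planes. The last claim needs checking: $-$swap on $U$ fixes $e-f$, which is negative, and negates $e+f$, which is positive. Since the positive $2$-plane of $M^\perp$ contains one positive direction from $N$ and one from $U$, $\tau$ reverses one positive direction. I must therefore decide which convention makes the period-domain component $\cD_M^+$ preserved. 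If orientation is reversed, I will instead use $-\tau$, which is trivial on $U$ up to swap and is $-1$ on $N$; equivalently, I will adjust the choice so that the resulting element preserves $\cD_M^+$ and acts on $A_{M^\perp}$ by $\pm1$. Since $\det$ changes only by the sign $(-1)^{\rank}$, I expect one of $\pm\tau$ to lie in $\uO^+(M^\perp)^*\setminus\SO^+(M^\perp)^*$ or to act as $-1$ on discriminants in a controlled way.

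\textbf{Step 2.} Extend this element to $\Lambda$ acting by $\pm\mathrm{id}$ on $M$ via \cite{Huy16}*{Proposition~14.2.6}. This gives an isometry $\Phi$ of $\Lambda$ that is trivial on $M$, has determinant $-1$, and preserves $\cD_M^+$. It normalizes $\Gamma_M^+$, because $\Gamma_M^+$ is the index-two subgroup of determinant-one elements of the group of isometries trivial on $M$ that preserve the component. Hence $\Phi$ descends to an involution on $\cD_M^+/\Gamma_M^+$. Its class modulo $\Gamma_M^+$ is independent of choices, since any two such elements differ by an element of $\Gamma_M^+$. Moreover, $\Phi^2\in\Gamma_M^+$, so it is an involution.

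\textbf{Step 3.} Identify $\Phi$ geometrically. By \cite{Shioda}*{Theorem~2}, exactly as invoked in the proof of Theorem~\ref{thm:symp-dual}, a Hodge isometry $H^2(X_2,\bZ)\to H^2(X_1,\bZ)$ of determinant $-1$ is induced by an isomorphism $X_1\cong\widehat{X_2}$. Indeed, $H^2(\widehat X)\cong H^2(X)$ via a determinant $-1$ isometry. So the point $\Phi\cdot p(X,\mu)$ is the period of $(\widehat X,\widehat\mu)$ for a marking $\widehat\mu$ that agrees with $\mu$ on $M$ through this identification, and it is therefore $(M,h)$-polarized because $\Phi$ fixes $M$ and preserves $\cD_M^+$.

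\textbf{Main obstacle.} The hardest part is the sign and orientation bookkeeping in Step~1: I need an isometry trivial on $M$ with determinant $-1$ that preserves the chosen component $\cD_M^+$. I would first try the explicit element ``swap on the extra $U$ summand composed with $-1$ on a suitable negative vector'', then verify its action on the oriented positive plane directly.
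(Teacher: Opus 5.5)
Your strategy is sound, but Step~1, which you correctly identify as the crux, is never completed, and each candidate you propose fails.

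\begin{itemize}
\item Your $\tau=\mathrm{id}_N\oplus(-\mathrm{swap})_U$ negates the positive vector $e+f$ and fixes the negative vector $e-f$. It therefore reverses the orientation of positive $2$-planes in $M^\perp\otimes\bR$ and interchanges $\cD_M^+$ and $\cD_M^-$.
\item Passing to $-\tau$ cannot help. The maps $\tau$ and $-\tau$ induce the same automorphism of $\bP(M^\perp\otimes\bC)$, so $-\tau$ also swaps the two components. Moreover, $-\tau$ acts by $-1$ on $A_{M^\perp}=A_N$, so its extension to $\Lambda$ acts by $-1$ on $M$. That contradicts the triviality on $M$ which Step~2 needs.
\item Your last suggestion, the swap composed with $-1$ on a negative vector, has determinant $(-1)(-1)=+1$. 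On $U$ it is even the identity, since the swap is the reflection in $e-f$.
\end{itemize}

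So, as written, Step~2 has no element with its three required properties: trivial on $M$, determinant $-1$, and preserving $\cD_M^+$. The error comes from transporting Lemma~\ref{lem:double-cover} incorrectly. Write $e_0=(1,0,0)$ and $e_4=(0,0,1)$. The isometry $U(-1)\cong U$ given by $e_0\mapsto e$, $e_4\mapsto -f$ carries $(r,D,s)\mapsto(-s,D,-r)$ to the \emph{plain} swap $e\leftrightarrow f$. This swap fixes $e+f$ and negates $e-f$.

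With this correction your argument works. Let $\Phi$ be the identity on the copy of $U^{\oplus 2}$ containing $M$ and $N$ (as in Definition-Lemma~\ref{deflem:mirror}) and the swap on the remaining $U$. This is exactly the isometry $\phi$ in the paper's Proof~II. It is trivial on $M$ and has determinant $-1$. It fixes a positive vector in each of $M$, $N$ and $U$, hence preserves $\cD_M^+$.

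Your Steps~2 and~3 then go through. The determinant $-1$ elements of $\{g\in\uO^+(\Lambda)\mid g|_M=\mathrm{id}\}$ form a single coset of the normal subgroup $\Gamma_M^+$, so the induced involution is canonical. Now let $(X',\mu')$ be an $(M,h)$-polarized surface with period $\Phi(p(X,\mu))$. Shioda's Theorem~2, applied to the determinant $-1$ Hodge isometry $(\wedge^2\mu')^{-1}\circ\Phi\circ\wedge^2\mu$, gives $X'\cong\widehat{X}$.

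The corrected route differs usefully from the paper's two proofs:
\begin{itemize}
\item Unlike Proof~I, it never needs very general members whose N\'eron--Severi lattice is exactly $M$ or $N$.
\item Unlike Proof~II, it works directly on periods. Choosing $\Phi$ to preserve $\cD_M^+$ from the start removes the need for the sign choice $\pm\phi$ and the ampleness correction $\psi$. The price is that you do not write down the marking $\widehat{\mu}$ on $\widehat{X}$ explicitly.
\end{itemize}
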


We present two proofs. The first is short but non-constructive. The second is constructive and is based on Shioda’s computation in \cite{Shioda}.

\begin{proof}[Proof~I]
For an abelian surface $Y$ underlying a very general member of $\cM_\cpx(M)$, we have that $M\cong\NS(Y)$, so the space $\cM_\cpx(M)$ can be naturally identified with $\cM_\cpx(Y)$. By hypothesis, $M$ satisfies Condition~\hyperref[deflem:mirror]{$\diamondsuit$}, so there exists a primitive sublattice $N\subseteq\Lambda$, satisfying the same condition, such that $M^\perp\cong N\oplus U$. Let $X$ be an abelian surface underlying a very general member of $\cM_\cpx(N)$. Then $N\cong\NS(X)$, which implies
$$
    N(X)\cong N\oplus U\cong M^\perp\cong T(Y).
$$
Therefore, $X$ is a mirror partner of $Y$. Thus $\cM_\cpx(Y)\cong\cM_\Kah(X)$ by Theorem~\ref{thm:mirror-moduli}. The statement then follows by applying Theorem~\ref{thm:symp-dual}~\ref{symp-dual:mirror-dual}.
\end{proof}

\begin{proof}[Proof~II]
Let $(X, \mu)$ be an $(M, h)$-polarized abelian surface in $\cM_\cpx(M)$, and let $\widehat{X}$ be the abelian surface dual to $X$. Our goal is to find a canonical marking
$$
    \widehat{\mu}\colon H^1(\widehat{H}, \bZ)
    \longrightarrow L
$$
such that $(\widehat{X}, \widehat{\mu})$ is $(M, h)$-polarized. First, the hypothesis asserts that $M$ admits a primitive embedding into the sum of two copies of $U$ in $\Lambda\cong U^{\oplus 3}$. Consider the two isometries:
\begin{itemize}
\item Let $\phi\in\uO^+(\Lambda)$ be the isometry that acts trivially on the $U^{\oplus2}$ containing $M$ and swaps the standard basis vectors of the remaining copy of $U$.
\item Let $\psi\in\uO^+(\Lambda)$ be the isometry that acts as $-\mathrm{id}$ on the $U^{\oplus2}$ containing $M$ and acts trivially on the remaining copy of $U$.
\end{itemize}
Note that $\det(\phi) = -1$ and $\det(\psi) = 1$. Moreover, $\psi = \wedge^2\nu$ where $\nu$ is an admissible automorphism of $L$ that relabels the basis elements.

By \cite{Shioda}*{Lemma~3}, there exists a canonical isomorphism
$$
    \alpha\colon H^2(\widehat{X},\bZ)
    \longrightarrow H^2(X,\bZ)
$$
that preserves the periods and satisfies $\det(\alpha) = -1$. Consider the composition
$$\xymatrix{
    H^2(\widehat{X}, \bZ)\ar[r]^-\alpha
    & H^2(X, \bZ)\ar[r]^-{\wedge^2\mu}
    & \Lambda.
}$$
Since $(X,\mu)$ is $M$-polarized and $\alpha$ preserves periods, the preimage of $M$ under this composition lies in $\NS(\widehat{X})$. By \cite{Shioda}*{Lemma~1}, the composition
\begin{equation}
\label{eqn:constr-pol-on-dual}
\xymatrix{
    H^2(\widehat{X}, \bZ)\ar[r]^-\alpha
    & H^2(X, \bZ)\ar[r]^-{\wedge^2\mu}
    & \Lambda\ar[rr]^-{\text{either}\;\phi\;\text{or}\;-\phi}
    && \Lambda
}
\end{equation}
is equal to $\wedge^2\widehat{\mu}$ for a unique admissible marking
$$
    \widehat{\mu}\colon H^1(\widehat{H}, \bZ)
    \longrightarrow L.
$$
Note that $(\wedge^2\widehat{\mu})^{-1}(M)\subseteq\NS(\widehat{X})$.
\begin{itemize}
\item If $(\wedge^2\widehat{\mu})^{-1}(h)$ is ample, then $(\widehat{X}, \widehat{\mu})$ is $(M, h)$-polarized.
\item If $(\wedge^2\widehat{\mu})^{-1}(h)$ is anti-ample, we further compose \eqref{eqn:constr-pol-on-dual} with $\psi$. The resulting composition equals $\wedge^2\widehat{\mu}'$ with $\widehat{\mu}' = \nu\circ\widehat{\mu}$. Then $(\widehat{X}, \widehat{\mu}')$ is $(M, h)$-polarized.
\end{itemize}
This completes the proof.
\end{proof}

\begin{rmk}
The isometry on $N(X) = H^0(X, \bZ)\oplus\NS(X)\oplus H^4(X, \bZ)$ that acts trivially on $\NS(X)$ and swaps the two summands of $H^0(X, \bZ)\oplus H^4(X, \bZ)$ followed by $-\mathrm{id}$ also appears in the context of K3~surfaces. In that setting, it corresponds to the cohomological action of the spherical twist along the structure sheaf. In contrast, abelian surfaces have no spherical object, and such an isometry cannot be realized as the cohomological action of any autoequivalence \cite{Yos09}*{Proposition~4.5}.
\end{rmk}

\subsection{Existence of mirror partners}
\label{subsec:crit_mirror-pair}

We now show that an abelian surface $X$ with Picard number $\rho(X)$ admits a mirror partner if and only if
\begin{itemize}
    \item $\rho(X)\leq2$, or
    \item $\rho(X)=3$ and $\NS(X)\cong\bZ(-2n)\oplus U$ for some positive integer~$n$.
\end{itemize} 

\begin{proof}[Proof of Theorem~\ref{thm:crit_mirror-partner}]
Suppose $Y$ is a mirror partner of $X$. Then
$$
    \NS(X)^{\perp\Lambda}
    \cong T(X)
    \cong N(Y)
    \cong \NS(Y)\oplus U.
$$
That is, the lattices $M\colonequals\NS(X)$ and $N\colonequals\NS(Y)$ satisfy Lemma~\ref{deflem:mirror}~\ref{mirror:N-in-Mperp}. The conclusion then follows from Lemma~\ref{deflem:mirror}~\ref{mirror:rank}. Conversely, if $M = \NS(X)$ satisfies Lemma~\ref{deflem:mirror}~\ref{mirror:rank}, then there exists a primitive sublattice $N\subseteq\Lambda$ such that $N^{\perp\Lambda}\cong M\oplus U$ by Lemma~\ref{deflem:mirror}~\ref{mirror:M-in-Nperp}. Note that $N$ satisfies Condition~\hyperref[deflem:mirror]{$\diamondsuit$}. Take a very general $Y\in\cM_\cpx(N)$. Then $\NS(Y)\cong N$, and
$$
    T(Y)\cong N^{\perp\Lambda}
    \cong M\oplus U
    \cong N(X).
$$
Hence $Y$ is a mirror partner of $X$.
\end{proof}

For an abelian surface $X$ of Picard number~$1$, we have $\NS(X)\cong\bZ(2n)$ for some positive integer~$n$. Following the computation in the proof of \cite{Dol96}*{Theorem~7.1}, one can verify that its stringy K\"ahler moduli space is
$$
    \cM_\Kah(X)\cong\cH/\Gamma_0(n),
$$
the quotient of the upper half-plane by the Hecke congruence subgroup
$$
    \Gamma_0(n) = \left\{
        \begin{pmatrix}
            a & b \\
            c & d
        \end{pmatrix}\in\PSL_2(\bZ)
    \;\middle|\;
        c\equiv 0\pmod{n}
    \right\}.
$$

In the setting of K3~surfaces, the corresponding moduli space is constructed by first removing from $\cQ^+(X)$ the hyperplane sections orthogonal to $(-2)$-vectors. That is, one considers
$$
    \cQ^+_0(X) = \cQ^+(X)
    \;\Big\backslash\;
    \bigcup_{\delta\in N(X),\,\delta^2 = -2}
    \delta^\perp,
$$
and then takes the quotient by $\uO^+(N(X))^*$ instead of $\SO^+(N(X))^*$. In the case of Picard number~$1$, this yields a quotient $\cH^0/\Gamma_0^+(n)$, where $\cH^0\subseteq\cH$ is a suitable open subset, and the subgroup $\Gamma_0^+(n)\subseteq\PSL(2,\bR)$ is the extension of $\Gamma_0(n)$ by the Fricke involution
$$
    w_n = \begin{pmatrix}
        0 & -1/\sqrt{n} \\
        \sqrt{n} & 0
    \end{pmatrix}.
$$

Returning to the setting of abelian surfaces, a mirror partner $Y$ of an abelian surface of Picard number~$1$ has $\NS(Y)\cong U\oplus\bZ(-2n)$. In particular, $\NS(Y)$ contains a hyperbolic plane $U$, so $Y$ is isomorphic to a product of elliptic curves $E \times E'$. Moreover, the summand $\bZ(-2n)\subseteq\NS(Y)$ implies the existence of a cyclic isogeny $\phi\colon E\to E'$ of degree~$n$; see, for example, the proof of \cite{Ma11}*{Proposition 4.1}. It is well known that $\cH/\Gamma_0(n)$ is the coarse moduli space of isomorphism classes of pairs $(E,A)$, where $E$ is an elliptic curve and $A\subseteq E$ is a cyclic subgroup of order~$n$ \cite{DS05_ModForm}*{Theorem~1.5.1}. This is compatible with the isomorphisms
$$
    \cM_\cpx(Y)\cong\cM_\Kah(X)\cong\cH/\Gamma_0(n).
$$

Mirror symmetry for abelian surfaces of Picard number~$2$ is more subtle than in the cases of Picard number~$1$ or~$3$. In contrast to those cases, the N\'eron--Severi lattice of a mirror partner is not, in general, uniquely determined by that of the original surface. In what follows, we first establish a lemma that characterizes mirror pairs in terms of discriminant forms. This will be used in Section~\ref{sec:self-mirror} to classify self-mirror abelian surfaces. We then give a criterion for deciding when two abelian surfaces have the same mirror.

\begin{lem}
\label{lem:mirror-pair}
Let $M, N\subseteq U^{\oplus 3}$ be primitive sublattices of signatures $(1, r-1)$, $(1, s-1)$, respectively, with $r + s = 4$, and let $(A_M, q_M)$, $(A_N, q_N)$ be their discriminant forms. Then the following conditions are equivalent:
\begin{enumerate}[label=\textup{(\arabic*)}]
    \item\label{mirror-pair:Mperp=N+U} $M^\perp\cong N\oplus U$.
    \item\label{mirror-pair:Nperp=M+U} $N^\perp\cong M\oplus U$.
    \item\label{mirror-pair:N-in-Mperp} There exists a primitive embedding $M\hookrightarrow U^{\oplus2}$ whose orthogonal complement is isomorphic to $N$.
    \item\label{mirror-pair:M-in-Nperp} There exists a primitive embedding $\,N\hookrightarrow U^{\oplus2}$ whose orthogonal complement is isomorphic to $M$.
    \item\label{mirror-pair:anti-isom} $A_M$ and $A_N$ are anti-isometric, that is, there exists an isomorphism $f\colon A_M\to A_N$ such that $-q_M = q_N\circ f$.
\end{enumerate}
In particular, two abelian surfaces $X$ and $Y$ are mirror partners if and only if $M\colonequals\NS(X)$ and $N\colonequals\NS(Y)$ satisfy any of the above conditions.
\end{lem}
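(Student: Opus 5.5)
The plan is to prove the cycle \ref{mirror-pair:Mperp=N+U} $\Leftrightarrow$ \ref{mirror-pair:N-in-Mperp}, the symmetric \ref{mirror-pair:Nperp=M+U} $\Leftrightarrow$ \ref{mirror-pair:M-in-Nperp}, then \ref{mirror-pair:N-in-Mperp} $\Leftrightarrow$ \ref{mirror-pair:anti-isom}. Since condition \ref{mirror-pair:anti-isom} is symmetric in $M$ and $N$ (if $f$ is an anti-isometry then so is $f^{-1}$), this links everything. The final sentence then follows from Corollary~\ref{cor:numGroth-trans}: $T(X)=\NS(X)^{\perp}$ and $N(Y)\cong\NS(Y)\oplus U$, so $X,Y$ are mirror partners exactly when \ref{mirror-pair:Mperp=N+U} holds. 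The rank condition $r+s=4$ is automatic there, because $\rank T(X)=6-r$.

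For \ref{mirror-pair:Mperp=N+U} $\Rightarrow$ \ref{mirror-pair:N-in-Mperp}, I argue as in Definition-Lemma~\ref{deflem:mirror}. The copy of $U$ inside $M^\perp$ can be moved, by \cite{Nik79}*{Theorem~1.14.4}, to the last summand of $U^{\oplus3}$. Then $M$ sits primitively in $U^{\oplus2}=U^\perp$, and its complement there is the complement of $U$ in $M^\perp\cong N\oplus U$. That complement is $\cong N$, because $U$ is unimodular and so splits off as an orthogonal summand. For the converse, I add back the third copy of $U$, exactly as in the step \ref{mirror:M-in-2U}$\Rightarrow$\ref{mirror:pair} of Definition-Lemma~\ref{deflem:mirror}. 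The same argument with $M$ and $N$ swapped gives \ref{mirror-pair:Nperp=M+U} $\Leftrightarrow$ \ref{mirror-pair:M-in-Nperp}.

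For \ref{mirror-pair:N-in-Mperp} $\Rightarrow$ \ref{mirror-pair:anti-isom}, I use Nikulin's standard fact \cite{Nik79}*{Proposition~1.6.1}: if $M$ and $M^\perp$ are mutually orthogonal primitive sublattices of the unimodular lattice $U^{\oplus2}$, then $A_M\cong A_{M^\perp}$ with $q_{M^\perp}\circ f=-q_M$. The converse \ref{mirror-pair:anti-isom} $\Rightarrow$ \ref{mirror-pair:N-in-Mperp} is the main step. Given an anti-isometry $f\colon A_M\to A_N$, its graph $\Gamma_f\subseteq A_M\oplus A_N$ is an isotropic subgroup with respect to $q_M\oplus q_N$. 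Its order is $|A_M|=\sqrt{|A_M||A_N|}$, so it is a maximal isotropic subgroup. The corresponding overlattice of $M\oplus N$ is therefore even and unimodular, with $M$ and $N$ primitive in it and orthogonal complements of each other \cite{Nik79}*{Proposition~1.4.1, 1.6.1}.

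This overlattice has signature $(1,r-1)+(1,s-1)=(2,2)$, since $r+s=4$. It is indefinite, even and unimodular, so it is isometric to $U^{\oplus2}$ by the classification of such lattices. This yields \ref{mirror-pair:N-in-Mperp}, and symmetrically \ref{mirror-pair:M-in-Nperp}. The only delicate points are the sign conventions in the gluing, namely that $q_N\circ f=-q_M$ is exactly the isotropy condition on $\Gamma_f$, and checking primitivity. I expect this gluing step to be the one needing the most care, though it is standard.
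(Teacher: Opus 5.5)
Your proposal is correct and follows the paper's proof. You prove (1)$\Leftrightarrow$(3) and (2)$\Leftrightarrow$(4) by moving the $U$-summand into standard position in $U^{\oplus 3}$, and (3)$\Leftrightarrow$(5)$\Leftrightarrow$(4) via Nikulin's gluing criterion. Where the paper simply cites \cite{Nik79}*{Corollary~1.6.2}, you unpack that criterion: the graph of $f$ is a maximal isotropic subgroup, and signature $(2,2)$ forces the resulting unimodular overlattice to be $U^{\oplus 2}$.

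One small point, which you share with the paper: in (3)$\Rightarrow$(1) the re-embedding of $M$ into $U^{\oplus 3}$ need not be the given one, so you should add why $M^\perp$ is determined up to isometry. For instance, $M^\perp$ and $N\oplus U$ have the same signature $(2,s)$ and discriminant form $\cong -q_M$, so they are isometric by \cite{Nik79}*{Corollary~1.13.3}. Alternatively, the primitive embedding of $M$ into $U^{\oplus 3}$ is unique by \cite{Nik79}*{Theorem~1.14.4}.
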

\begin{proof}
Suppose $M^\perp\cong N\oplus U$. Since all primitive embeddings $U\hookrightarrow U^{\oplus 3}$ are equivalent under the action of $\uO(U^{\oplus 3})$, we may identify the $U$-summand of $M^\perp\cong N\oplus U$ with one of the $U$-summands of $U^{\oplus 3}$. With this identification, $M = (M^\perp)^\perp$ appears as a primitive sublattice of the remaining $U^{\oplus 2}$-summand, and with $N$ as its orthogonal complement. This proves \ref{mirror-pair:Mperp=N+U}$\implies$\ref{mirror-pair:N-in-Mperp}.

Conversely, by identifying the codomain of the given embedding $M\hookrightarrow U^{\oplus 2}$ with a $U^{\oplus 2}$-summand of $U^{\oplus 3}$, we obtain a primitive embedding $M\hookrightarrow U^{\oplus 3}$ whose orthogonal complement is isomorphic to $N\oplus U$. Hence \ref{mirror-pair:Mperp=N+U} $\Longleftrightarrow$ \ref{mirror-pair:N-in-Mperp}. The proof of \ref{mirror-pair:Nperp=M+U} $\Longleftrightarrow$ \ref{mirror-pair:M-in-Nperp} is similar.

By \cite{Nik79}*{Corollary~1.6.2}, two even lattices $M$ and $N$ occur as primitive and mutually orthogonal sublattices of an even unimodular lattice, which in our case is $U^{\oplus 2}$, if and only if there exists an isomorphism $f\colon A_M\to A_N$ satisfying $-q_M = q_N\circ f$. This gives the equivalences \ref{mirror-pair:N-in-Mperp} $\Longleftrightarrow$ \ref{mirror-pair:anti-isom} $\Longleftrightarrow$ \ref{mirror-pair:M-in-Nperp}.

The last statement then follows from the definition of mirror partners.
\end{proof}

\begin{prop}
\label{prop:stably-equiv}
Let $M_1$ and $M_2$ be even lattices of signature $(1,1)$ with discriminant forms $(A_{M_1}, q_{M_1})$ and $(A_{M_2}, q_{M_2})$. Then the following statements are equivalent:
\begin{enumerate}[label=\textup{(\arabic*)}]
    \item\label{stably-equiv:same-mirror}
    There exists an even lattice $N$ with primitive embeddings $\iota_i\colon N\hookrightarrow U^{\oplus 2}$, $i = 1,2$, such that $M_i\cong\iota_i(N)^\perp$.
    \item\label{stably-equiv:same-disc}
    There exists an isomorphism $f\colon A_{M_1}\rightarrow A_{M_2}$ such that $q_{M_1}=q_{M_2}\circ f$.
    \item\label{stably-equiv:plus-U}
    There exists an isomorphism $M_1\oplus U\cong M_2\oplus U$.
    \item\label{stably-equiv:stably-equiv}
    The lattices $M_1$ and $M_2$ are stably equivalent, that is, $M_1\oplus M_1'\cong M_2\oplus M_2'$ for some unimodular lattices $M_1'$ and $M_2'$.
\end{enumerate}
In particular, two abelian surfaces $X_1$ and $X_2$ of Picard number~$2$ share the same mirror partner if and only if $\NS(X_1)$ and $\NS(X_2)$ are stably equivalent.
\end{prop}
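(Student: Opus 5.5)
We will prove the cycle \ref{stably-equiv:same-mirror}$\Rightarrow$\ref{stably-equiv:same-disc}$\Rightarrow$\ref{stably-equiv:plus-U}$\Rightarrow$\ref{stably-equiv:stably-equiv}$\Rightarrow$\ref{stably-equiv:same-disc}$\Rightarrow$\ref{stably-equiv:same-mirror}, relying mainly on Nikulin's theory of discriminant forms.

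\emph{From \ref{stably-equiv:same-mirror} to \ref{stably-equiv:same-disc}.} Since $U^{\oplus2}$ is unimodular, each $M_i\cong\iota_i(N)^\perp$ has discriminant form anti-isometric to $(A_N,q_N)$ by \cite{Nik79}*{Corollary~1.6.2}, exactly as in the proof of Lemma~\ref{lem:mirror-pair}. Composing one anti-isometry with the inverse of the other gives an isometry $A_{M_1}\cong A_{M_2}$.

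\emph{From \ref{stably-equiv:same-disc} to \ref{stably-equiv:plus-U}.} The lattices $M_1\oplus U$ and $M_2\oplus U$ are even, indefinite, of the same signature $(2,2)$, and have isomorphic discriminant forms, since $U$ is unimodular. Their rank is $4$, and the minimal number of generators of the discriminant group is at most $2\le 4-2$. Hence \cite{Nik79}*{Theorem~1.14.2} (uniqueness in the genus) gives $M_1\oplus U\cong M_2\oplus U$. This is the step to check most carefully. Nikulin's hypothesis is $\operatorname{rank}\ge 2+\ell(A)$, with an extra condition in the equality case at $p=2$. If that borderline case causes trouble, I would instead use Nikulin's corollary that an even indefinite lattice containing $U$ is determined by its signature and discriminant form, \cite{Nik79}*{Corollary~1.13.3}, which requires $\operatorname{rank}\ge 3+\ell(A)$. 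To meet that bound, one can reduce to \ref{stably-equiv:stably-equiv} and then use the $U^{\oplus2}$ embedding argument below.

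\emph{From \ref{stably-equiv:plus-U} to \ref{stably-equiv:stably-equiv}.} This is immediate.

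\emph{From \ref{stably-equiv:stably-equiv} to \ref{stably-equiv:same-disc}.} Discriminant forms are unchanged by adding unimodular summands, since $A_{M\oplus M'}=A_M\oplus A_{M'}$ and $A_{M'}=0$ when $M'$ is unimodular.

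\emph{From \ref{stably-equiv:same-disc} to \ref{stably-equiv:same-mirror}.} By Definition-Lemma~\ref{deflem:mirror}, $M_1$ (rank $2$) admits a primitive embedding into $U^{\oplus2}$. Let $N$ be its orthogonal complement; then $N$ is even of signature $(1,1)$ and $(A_N,q_N)\cong(A_{M_1},-q_{M_1})\cong(A_{M_2},-q_{M_2})$. By \cite{Nik79}*{Proposition~1.15.1} / Corollary~1.6.2, as used in Lemma~\ref{lem:mirror-pair}, an anti-isometry $A_N\cong A_{M_2}$ produces an even unimodular overlattice of $N\oplus M_2$ of signature $(2,2)$. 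By uniqueness of even indefinite unimodular lattices this overlattice is $U^{\oplus2}$, so $N$ and $M_2$ embed there as mutual orthogonal complements. Together with the embedding coming from $M_1$, this gives $\iota_1$ and $\iota_2$.

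The final sentence of the proposition follows from Lemma~\ref{lem:mirror-pair}. If $X_1$ and $X_2$ have a common mirror $Y$, set $N=\NS(Y)$; then statement~\ref{mirror-pair:M-in-Nperp} of that lemma gives \ref{stably-equiv:same-mirror}. Conversely, given \ref{stably-equiv:same-mirror}, take a very general $Y\in\cM_\cpx(N)$, so that $\NS(Y)\cong N$; as in the proof of Theorem~\ref{thm:crit_mirror-partner}, $Y$ is then a mirror partner of both $X_1$ and $X_2$.
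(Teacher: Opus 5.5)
Your proof is correct, but it is organized differently from the paper's.

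\textbf{What is the same.} The paper proves (1)$\Leftrightarrow$(2) just as you do, via Lemma~\ref{lem:mirror-pair}, i.e.\ Nikulin's Corollary~1.6.2 and gluing.

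\textbf{What differs.} The paper never goes directly from (2) to (3). It derives (3) from (1): it regards both $\iota_i$ as primitive embeddings of $N$ into $U^{\oplus 3}$ and uses the uniqueness of such embeddings up to $\uO(U^{\oplus 3})$ (Nikulin's Theorem~1.14.4). This identifies $M_1\oplus U$ and $M_2\oplus U$ with the same lattice $N^\perp$. It then gets (4)$\Leftrightarrow$(2) from Nikulin's Theorem~1.3.1.

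You instead prove (2)$\Rightarrow$(3) directly, using uniqueness in the genus (Theorem~1.14.2) for the rank-$4$ lattices $M_i\oplus U$. You close the cycle with the trivial fact that even unimodular summands do not change discriminant forms; here ``unimodular'' must be read as ``even unimodular'', as the paper also implicitly does. So you never need the nontrivial direction of Theorem~1.3.1, and you need no common $N$ to reach (3).

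\textbf{What each buys.} Your route is slightly more economical. Theorem~1.14.4 is itself proved by applying Theorem~1.14.2 to the orthogonal complement $N^\perp\cong M_i\oplus U$, together with surjectivity of $\uO(N^\perp)\to\uO(q_{N^\perp})$. You only need the uniqueness-in-genus half of that. The paper's route, in exchange, ties (3) to the mirror picture. The isomorphism $M_1\oplus U\cong M_2\oplus U$ comes from an element of $\uO(U^{\oplus 3})$ matching the two copies of $N^\perp$, i.e.\ the transcendental lattice of the common mirror.

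\textbf{The borderline case.} Your worry here is unfounded. In Theorem~1.14.2 the extra $2$-adic hypothesis enters only when $\mathrm{rk}\,T=\ell(A_{T_2})$, meaning there is no unimodular $2$-adic Jordan component. That cannot happen here, since $4>2\ge\ell(A_{T_2})$, so the hypothesis holds outright. Your fallback through Corollary~1.13.3 is therefore unnecessary. It would not work anyway: stabilizing further only yields $M_1\oplus U^{\oplus 2}\cong M_2\oplus U^{\oplus 2}$, not (3).
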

\begin{proof}
The implication \ref{stably-equiv:same-mirror}$\implies$\ref{stably-equiv:same-disc} follows from Lemma~\ref{lem:mirror-pair}. To prove the converse, choose any primitive embedding $M_1\hookrightarrow U^{\oplus 2}$, and let $N$ be its orthogonal complement. Then $A_{M_1}$ and $A_N$ are anti-isometric, and hence $A_{M_2}$ is also anti-isometric to $A_N$. By Lemma~\ref{lem:mirror-pair} again, the lattice $N$ admits a primitive embedding into $U^{\oplus 2}$ with orthogonal compliment isomorphic to $M_2$. This proves \ref{stably-equiv:same-mirror} $\Longleftrightarrow$ \ref{stably-equiv:same-disc}.

Under the situation of \ref{stably-equiv:same-mirror}, if we identify the codomain of $\iota_i$ with a $U^{\oplus 2}$-summand of $U^{\oplus 3}$, then the orthogonal complement of $N$ becomes $M_i\oplus U$. Since $N$ has signature $(1,1)$, the two embeddings of $N$ into $U^{\oplus 3}$ are equivalent under the action of $\uO(U^{\oplus})$. Applying such an action yields an isomorphism $M_1\oplus U\cong M_2\oplus U$. This shows \ref{stably-equiv:same-mirror}$\implies$\ref{stably-equiv:plus-U}.

The implication \ref{stably-equiv:plus-U}$\implies$\ref{stably-equiv:stably-equiv} is immediate, and the equivalence \ref{stably-equiv:stably-equiv} $\Longleftrightarrow$ \ref{stably-equiv:same-disc} follows from \cite{Nik79}*{Theorem~1.3.1}. Together, these implications establish the equivalence of all four statements.
\end{proof}

\begin{rmk}
Consider the set of equivalence classes of abelian surfaces of Picard number~$2$, where each class $[X]$ consists of those $X'$ with $\NS(X')$ stably equivalent to $\NS(X)$. By Proposition~\ref{prop:stably-equiv}, mirror symmetry defines an involution on this set. Note that if two even lattices $M_1, M_2\subseteq U^{\oplus 3}$ of signature~$(1,1)$ are stably equivalent, then their orthogonal complements are equivalent under the action of $\uO(U^{\oplus 3})$. By the construction of complex moduli, this yields a canonical isomorphism $\cM_\cpx(M_1)\cong\cM_\cpx(M_2)$. Hence, we have canonical isomorphisms $\cM_\cpx(X_1)\cong\cM_\cpx(X_2)$ for all $X_1, X_2$ in the same equivalence class.
\end{rmk}

\section{Self-mirror abelian surfaces}
\label{sec:self-mirror}

The goal of this section is to classify self-mirror abelian surfaces, namely those abelian surfaces $X$ satisfying
$
    T(X)\cong N(X),
$
in terms of the discriminant forms of their N\'eron--Severi lattices. Note that in this case $\cM_\cpx(X)\cong\cM_\Kah(X)$.

\subsection{Anti-automorphisms on discriminant forms}

Let $X$ be an abelian surface, and denote by $(A, q)$ the discriminant form of $\NS(X)$. By Lemma~\ref{lem:mirror-pair}, $X$ is self-mirror if and only if there exists an automorphism
$$
    f\colon A\longrightarrow A
    \qquad\text{such that}\qquad
    q = -q\circ f.
$$
We call such a map $f$ an \emph{anti-automorphism}. For each prime $p$, the restriction of the quadratic form $q$ to the $p$-Sylow subgroup $A_p\subseteq A$ takes the form
$$
    q_p\colon A_p\longrightarrow
    \begin{cases}
        2\bQ^{(p)}/2\bZ, & p \neq 2, \\
        \bQ^{(2)}/2\bZ, & p = 2,
    \end{cases}
$$
where $\bQ^{(p)}\subseteq\bQ$ is the subset of rational numbers whose denominators are powers of $p$.
Using the Chinese Remainder Theorem, one can check that $(A,q)$ admits an anti-automorphism if and only if each $(A_p, q_p)$ does. Hence, the problem reduces to checking the existence of anti-automorphisms locally at each prime.

Since $\NS(X)$ has rank two in our setting, each local component $(A_p, q_p)$ has one of the following forms by \cite{Nik79}*{Proposition~1.8.1}:
\begin{itemize}
    \item $p$ is odd, $A_p\cong\bZ/p^k\bZ$ for some integer $k$, and there exists an integer $\theta$ not divisible by $p$ such that
    $$
        q_p(\overline{a}) = \frac{2\theta a^2}{p^k}
        \qquad\text{for each}\qquad
        \overline{a}\in A_p.
    $$
    Here $a$ is an integer representing $\overline{a}\in\bZ/p^k\bZ$.
    \item $p$ is odd, $A_p\cong(\bZ/p^{k_1}\bZ)\oplus(\bZ/p^{k_2}\bZ)$ for some integers $k_1, k_2$, and there exist integers $\theta_1, \theta_2$ not divisible by $p$ such that
    $$
        q_p(\overline{a}_1, \overline{a}_2)
        = \frac{2\theta_1 a_1^2}{p^{k_1}}
            + \frac{2\theta_2 a_2^2}{p^{k_2}}
        \qquad\text{for each}\qquad
        (\overline{a}_1, \overline{a}_2)\in A_p.
    $$
    Here $a_1, a_2$ are integers representing $\overline{a}_1\in\bZ/p^{k_1}\bZ$ and $\overline{a}_2\in\bZ/p^{k_2}\bZ$.
    \item $p = 2$, $A_2\cong\bZ/2^k\bZ$ for some integer $k$, and there exists an odd integer $\theta$ such that
    $$
        q_2(\overline{a})
        = \frac{\theta a^2}{2^k}
        \qquad\text{for each}\qquad
        \overline{a}\in A_2.
    $$
    Here $a$ is an integer representing $\overline{a}\in\bZ/2^k\bZ$.
    \item $p=2$, $A_2\cong(\bZ/2^{k_1}\bZ)\oplus(\bZ/2^{k_2}\bZ)$ for some integers $k_1, k_2$, and there exist odd integers $\theta_1, \theta_2$ such that
    $$
        q_2(\overline{a}_1, \overline{a}_2)
        = \frac{\theta_1 a_1^2}{2^{k_1}}
            + \frac{\theta_2 a_2^2}{2^{k_2}}
        \qquad\text{for each}\qquad
        (\overline{a}_1, \overline{a}_2)\in A_2.
    $$
    Here $a_1, a_2$ are integers representing $\overline{a}_1\in\bZ/2^{k_1}\bZ$ and $\overline{a}_2\in\bZ/2^{k_2}\bZ$.
    \item $p=2$, $A_2\cong(\bZ/2^{k}\bZ)\oplus(\bZ/2^{k}\bZ)$ for some integer $k$, and there exists an odd integer $\theta$ such that
    $$
        q_2(\overline{a}_1,\overline{a}_2)
        = \frac{\theta a_1a_2}{2^{k-1}}
        \qquad\text{for each}\qquad
        (\overline{a}_1, \overline{a}_2)\in A_2.
    $$
    Here $a_1, a_2$ are integers representing $\overline{a}_1,\overline{a}_2\in\bZ/2^{k}\bZ$.
    \item $p=2$, $A_2\cong(\bZ/2^{k}\bZ)\oplus(\bZ/2^{k}\bZ)$ for some integer $k$, and there exists an odd integer $\theta$ such that
    $$
        q_2(\overline{a}_1,\overline{a}_2)
        = \frac{\theta(a_1^2+a_1a_2+a_2^2)}{2^{k-1}}
        \qquad\text{for each}\qquad
        (\overline{a}_1, \overline{a}_2)\in A_2.
    $$
    Here $a_1, a_2$ are integers representing $\overline{a}_1,\overline{a}_2\in\bZ/2^{k}\bZ$.
\end{itemize}

In the following, we determine precisely when a given $(A_p, q_p)$ listed above admits an anti-automorphism, starting from the case where $p$ is odd.

\begin{lem}
\label{lem:p-odd_cyclic}
Assume that $p$ is odd, $A_p\cong\bZ/p^k\bZ$ for some integer $k$, and there exists an integer~$\theta$ not divisible by $p$ such that
$$
    q_p(\overline{a})
    = \frac{2\theta a^2}{p^k}
    \in 2\bQ^{(p)}/2\bZ
    \qquad\text{for each}\qquad
    \overline{a}\in A_p.
$$
Then $(A_p, q_p)$ admits an anti-automorphism if and only if $p\equiv 1\pmod{4}$.
\end{lem}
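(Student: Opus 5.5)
Every automorphism of the cyclic group $A_p\cong\bZ/p^k\bZ$ is multiplication by a unit $u\in(\bZ/p^k\bZ)^\times$. So the plan is to translate the existence of an anti-automorphism into a congruence condition on $u$, and then read off the answer from the structure of squares modulo $p^k$. The case $k=0$ is trivial: the group is zero and the identity is an anti-automorphism. In the intended situation we take $k\ge1$.

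\textbf{Reduction to a congruence.} Let $f$ be multiplication by $u$. The condition $q_p = -q_p\circ f$ says that for every integer $a$,
$$
    \frac{2\theta a^2}{p^k} \equiv -\frac{2\theta u^2 a^2}{p^k} \pmod{2\bZ},
$$
that is, $2\theta a^2(1+u^2)/p^k\in 2\bZ$. Taking $a=1$ gives
$$
    \theta(1+u^2)\equiv 0 \pmod{p^k}.
$$
Conversely, this congruence implies the condition for every $a$. Since $p\nmid\theta$, the congruence is equivalent to $u^2\equiv -1\pmod{p^k}$. Any such $u$ is automatically a unit, because $p\nmid u^2$. Hence $(A_p,q_p)$ admits an anti-automorphism if and only if $-1$ is a square modulo $p^k$.

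\textbf{Solving $u^2\equiv-1\pmod{p^k}$.} The main step is to show that, for $p$ odd and $k\ge1$, this congruence is solvable exactly when $p\equiv1\pmod 4$.

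For necessity, reduce modulo $p$: $u^2\equiv-1$ gives an element of order $4$ in the cyclic group $(\bZ/p\bZ)^\times$ of order $p-1$. Hence $4\mid p-1$.

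For sufficiency, one route is Euler's criterion to get a solution modulo $p$, followed by Hensel's lemma to lift it to $p^k$. Hensel applies since the derivative $2u$ is nonzero mod $p$ for $p$ odd. A cleaner route uses that $(\bZ/p^k\bZ)^\times$ is cyclic of order $p^{k-1}(p-1)$, which is divisible by $4$ when $p\equiv1\pmod4$. Taking $u=g^{\varphi(p^k)/4}$ for a generator $g$ gives $u^2=-1$, the unique element of order $2$.

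There is no real obstacle here. The only care needed is in checking that the congruence for the single representative $a=1$ suffices for all $a$, and in handling the degenerate case $k=0$ as above.
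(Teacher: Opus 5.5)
Your proposal is correct and follows the paper's proof: both reduce the anti-automorphism condition for $f=u\cdot(-)$ to $p^k\mid u^2+1$. The paper then only cites as standard the fact that $-1$ is a square modulo $p^k$ if and only if $p\equiv1\pmod 4$; you prove it, using the order-$4$ argument modulo $p$ for necessity and Hensel's lemma or cyclicity of $(\bZ/p^k\bZ)^\times$ for sufficiency, and your remark that $k=0$ must be excluded is a fair point the paper leaves implicit.
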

\begin{proof}
An automorphism $f\colon\bZ/p^k\bZ\to\bZ/p^k\bZ$ is determined by $f(\overline{1}) = \overline{u}$, where $p\nmid u$. The condition $q_p = -q_p\circ f$ is then equivalent to $p^k\mid u^2+1$. It is a standard result in elementary number theory that such an $u$ exists if and only if $p\equiv1\pmod{4}$.
\end{proof}

\begin{lem}
\label{lem:p-odd_diag}
Assume that $p$ is odd, $A_p\cong(\bZ/p^{k_1}\bZ)\oplus(\bZ/p^{k_2}\bZ)$ for some integers $k_1, k_2$, and there exist integers $\theta_1, \theta_2$ not divisible by $p$ such that
$$
    q_p(\overline{a}_1, \overline{a}_2)
    = \frac{2\theta_1 a_1^2}{p^{k_1}}
        + \frac{2\theta_2 a_2^2}{p^{k_2}}
    \in 2\bQ^{(p)}/2\bZ
    \qquad\text{for each}\qquad
    (\overline{a}_1, \overline{a}_2)\in A_p.
$$
Then $(A_p, q_p)$ admits an anti-automorphism if and only if
\begin{itemize}
    \item $p\equiv 1\pmod{4}$, or
    \item $p\equiv 3\pmod{4}$ and $A_p\cong(\bZ/p^{k}\bZ)\oplus(\bZ/p^{k}\bZ)$.
\end{itemize}
\end{lem}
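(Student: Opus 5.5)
We argue case by case on $p \bmod 4$ and on whether $k_1=k_2$. Throughout, assume without loss of generality that $k_1\le k_2$. If $k_1=0$, the group $A_p$ is cyclic and we are in the situation of Lemma~\ref{lem:p-odd_cyclic}. So assume $k_1\geq1$; the two summands $\bZ/p^{k_1}\bZ$ and $\bZ/p^{k_2}\bZ$ are then orthogonal for the bilinear form attached to $q_p$.

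\emph{Case $p\equiv1\pmod4$.} By Lemma~\ref{lem:p-odd_cyclic}, each cyclic summand, with its restricted form $2\theta_i a_i^2/p^{k_i}$, admits an anti-automorphism $f_i$. Concretely, $f_i$ is multiplication by some $u_i$ with $p^{k_i}\mid u_i^2+1$. Since the summands are orthogonal, $f_1\oplus f_2$ is an anti-automorphism of $(A_p,q_p)$.

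\emph{Case $p\equiv3\pmod4$ and $k_1=k_2=k$.} Here $q_p=\tfrac{2}{p^k}Q(a_1,a_2)$ with $Q=\theta_1a_1^2+\theta_2a_2^2$. An anti-automorphism is the same as a matrix $g\in\GL_2(\bZ/p^k\bZ)$ with $Q\circ g\equiv -Q \pmod{p^k}$. The forms $Q$ and $-Q$ are both unimodular binary forms over $\bZ_p$ with the same determinant $\theta_1\theta_2$, since $(-1)^2=1$. By the classification of unimodular forms over $\bZ_p$ for odd $p$ (rank and determinant modulo squares), they are $\bZ_p$-equivalent. Reducing such an equivalence modulo $p^k$ gives the desired $g$.

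If one prefers to avoid the classification, the same can be done by hand. First, $-Q$ represents the unit $\theta_1$ modulo $p$, since any nondegenerate binary form over $\mathbb{F}_p$ represents all nonzero elements. Hensel's lemma lifts such a representation to a primitive vector $v$ with $-Q(v)=\theta_1$. Splitting off $v$, the orthogonal complement is a unimodular rank-one form whose value is forced by comparing determinants. This yields the isometry.

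\emph{Case $p\equiv3\pmod4$ and $k_1<k_2$.} We rule out anti-automorphisms using a Legendre-symbol invariant. Let $S\subseteq A_p$ be the set of elements of order exactly $p^{k_2}$, i.e.\ those $(\overline a_1,\overline a_2)$ with $p\nmid a_2$; any automorphism preserves $S$. For $x\in S$ we have
$$\tfrac{p^{k_2}}{2}q_p(x)=\theta_1 p^{k_2-k_1}a_1^2+\theta_2a_2^2\equiv\theta_2a_2^2\pmod p,$$
and this quantity is well defined modulo $p$. Its Legendre symbol therefore equals $(\theta_2/p)$ for every $x\in S$. If $f$ were an anti-automorphism and $x\in S$, then $f(x)\in S$ and $\tfrac{p^{k_2}}{2}q_p(f(x))=-\tfrac{p^{k_2}}{2}q_p(x)$. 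Taking Legendre symbols gives $(\theta_2/p)=(-1/p)(\theta_2/p)$, so $(-1/p)=1$, which contradicts $p\equiv3\pmod4$.

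The main obstacle is this last, non-existence direction: one has to find an invariant of the form that any automorphism must respect, and the set of maximal-order elements is the natural characteristic subset to use. The existence in the equal-exponent case is routine once the local classification (or the Hensel argument) is available.
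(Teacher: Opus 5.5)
Your proof is correct. In two of the three cases it is the paper's argument:
\begin{itemize}
\item for $p\equiv1\pmod4$, the paper likewise multiplies both coordinates by a square root of $-1$ modulo $p^{\max\{k_1,k_2\}}$;
\item for $p\equiv3\pmod4$ and $k_1<k_2$, it evaluates $-q_p=q_p\circ f$ at $(\overline0,\overline1)$ and reduces modulo $p$ to get $p\mid u_2^2+1$. Your Legendre-symbol version over the maximal-order set $S$ is the same computation; the paper does not need $p\nmid u_2$, since $p\mid u_2^2+1$ is already impossible.
\end{itemize}

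The genuine difference is the existence part for $p\equiv3\pmod4$ and $k_1=k_2=k$. The paper works by hand. Cauchy--Davenport, applied to the squares and to $\theta_1\theta_2$ times the squares in $\bZ/p\bZ$, gives $m,n$ with $p\mid m^2+n^2\theta_1\theta_2+1$. An explicit matrix with entries $m$, $n\theta_1$, $-n\theta_2$, $m$ then settles $k=1$, and an explicit correction of three entries lifts a solution from $p^k$ to $p^{k+1}$.

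You instead observe that $Q$ and $-Q$ are unimodular binary $\bZ_p$-forms with the same determinant; the sign cancels because the rank is even. Hence they are $\bZ_p$-isometric, and you reduce one isometry modulo $p^k$. Your hand version is that classification specialized to rank two, with universality of binary forms over $\mathbb{F}_p$ taking the place of Cauchy--Davenport. The Hensel step is legitimate because the gradient $2(\theta_1v_1,\theta_2v_2)$ is nonzero modulo $p$.

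Your route is shorter and handles all $k$ at once. Pushed one step further, it gives the whole lemma uniformly. For odd $p$, a form on $A_p$ is determined by the ranks and determinant classes of its homogeneous constituents, so $-q_p\cong q_p$ exactly when $p\equiv1\pmod4$ or all constituents have even rank. The paper's route is self-contained and explicit. It also reuses the same inductive template the authors need for the $2$-adic lemmas, where the classification is considerably messier.

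One small point: your preliminary reduction of $k_1=0$ to Lemma~\ref{lem:p-odd_cyclic} is unnecessary, since your three arguments work verbatim when $k_1=0$. In the degenerate case $k_1=k_2=0$ it would even mislead. That lemma tacitly assumes $k\geq1$, whereas the trivial group always admits an anti-automorphism.
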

\begin{proof}
If $p\equiv 1\pmod{4}$, then there exists an integer $u$ such that $p^{\max\{k_1,k_2\}}\mid u^2+1$. The automorphism
$$
    f\colon(\bZ/p^{k_1}\bZ)\oplus(\bZ/p^{k_2}\bZ)
    \longrightarrow
    (\bZ/p^{k_1}\bZ)\oplus(\bZ/p^{k_2}\bZ), \quad
    \begin{cases}
        f(\overline{1}, \overline{0})
        = (\overline{u}, \overline{0}), \\
        f(\overline{0}, \overline{1})
        = (\overline{0}, \overline{u}),
    \end{cases}
$$
then satisfies $q_p = -q_p\circ f$. In the remaining part of the proof, we assume $p\equiv 3\pmod{4}$.

First, we show the existence of an anti-isomorphism $f$ forces $k_1 = k_2$. Assume instead that $k_1 < k_2$, and write $f(\overline{0}, \overline{1}) = (\overline{u}_1, \overline{u}_2)$. Evaluating the relation $-q_p = q_p\circ f$ at $(\overline{0}, \overline{1})$ gives
$$
    -\frac{2\theta_2}{p^{k_2}}
    = \frac{2\theta_1u_1^2}{p^{k_1}}
        + \frac{2\theta_2u_2^2}{p^{k_2}}
    \pmod{2\bZ}.
$$
This implies $p\mid u_2^2 + 1$, which is impossible when $p\equiv3\pmod{4}$. Hence $k_1 = k_2$.

Now we show that an anti-automorphism exists when $k \colonequals k_1 = k_2$. Consider the quadratic form
$$
    q_p\colon(\bZ/p^k\bZ)\oplus(\bZ/p^k\bZ)
    \longrightarrow
    2\bQ^{(p)}/2\bZ,
    \quad
        q_p(\overline{a}_1, \overline{a}_2)
    = \frac{2\theta_1 a_1^2}{p^k}
        + \frac{2\theta_2 a_2^2}{p^k},
    \quad p\nmid\theta_1\theta_2.
$$
and a group homomorphism
$$
    f\colon (\bZ/p^k\bZ)\oplus(\bZ/p^k\bZ)
    \longrightarrow
    (\bZ/p^k\bZ)\oplus(\bZ/p^k\bZ),
    \quad\begin{cases}
        f(\overline{1}, \overline{0})
        = (\overline{u}_{11}, \overline{u}_{12}), \\
        f(\overline{0}, \overline{1})
        = (\overline{u}_{21}, \overline{u}_{22}).
    \end{cases}
$$
For this map to be an anti-automorphism, it must satisfy $-q_p = q_p\circ f$. A straightforward computation shows that this holds if and only if the following conditions hold:
\begin{enumerate}[label=(\roman*)]
    \item\label{anti-aut:odd-p_(1,0)}
    $
        -q_p(\overline{1}, \overline{0})
        = q_p\circ f(\overline{1}, \overline{0}),
    $
    equivalently,
    $
        p^k\mid(u_{11}^2+1)\theta_1+u_{12}^2\theta_2,
    $
    \item\label{anti-aut:odd-p_(0,1)}
    $
        -q_p(\overline{0}, \overline{1})
        = q_p\circ f(\overline{0}, \overline{1}),
    $
    equivalently,
    $
        p^k\mid u_{21}^2\theta_1+(u_{22}^2+1)\theta_2,
    $
    \item\label{anti-aut:odd-p_(1,1)}
    $
        p^k\mid u_{11}u_{21}\theta_1+u_{12}u_{22}\theta_2.
    $
    \item\label{anti-aut:odd-p_aut}
    $f$ is an automorphism, equivalently,
    $
        p\nmid u_{11}u_{22} - u_{12}u_{21}.
    $
\end{enumerate}
It remains to find integers $u_{ij}$ satisfying these conditions. We proceed by induction.

\medskip
\noindent\textbf{Base case $\boldsymbol{k=1}$.}
Consider the sets
$$
    A = \left\{
        \overline{m^2}\in\bZ/p\bZ
    \;\middle|\;
        m\in\bZ
    \right\},
    \qquad
    B = \left\{
        \overline{n^2\theta_1\theta_2}\in\bZ/p\bZ
    \;\middle|\;
        n\in\bZ
    \right\}.
$$
Since $p\nmid\theta_1\theta_2$, we have $|A| = |B| = \frac{p+1}{2}$. By the Cauchy--Davenport theorem, their sum
$$
    A + B = \left\{
        \overline{m^2+n^2\theta_1\theta_2}\in\bZ/p\bZ
    \;\middle|\;
        m,n\in\bZ
    \right\}
$$
satisfies
$
    |A + B| \geq \min\left\{
        p,\, |A| + |B| - 1
    \right\} = p.
$
Hence $A + B = \bZ/p\bZ$. In particular, there exists integers $m$ and $n$ such that
\begin{equation}
\label{eqn:div-cond-mn}
    p\mid m^2 + n^2\theta_1\theta_2 + 1.
\end{equation}
Choose
$$
    u_{11} = u_{22} = m, \quad
    u_{12} = n\theta_1, \quad
    u_{21} = -n\theta_2.
$$
Then they satisfy Conditions~\ref{anti-aut:odd-p_(1,0)}--\ref{anti-aut:odd-p_aut} with $k=1$.

Note that $p\nmid n$. Otherwise, we would have $p\mid m^2 + 1$, which contradicts the assumption that $p\equiv 3\pmod{4}$. Thus, neither $u_{12}$ nor $u_{21}$ is divisible by $p$.

\medskip
\noindent\textbf{Inductive step.}
Assume there exist integers $u_{ij}$ satisfying \ref{anti-aut:odd-p_(1,0)}--\ref{anti-aut:odd-p_aut} for some $k\geq1$, with 
$$
    u_{11}\equiv u_{22}\equiv m\pmod{p}, \qquad
    u_{12}\equiv n\theta_1\pmod{p}, \qquad
    u_{21}\equiv -n\theta_2\pmod{p},
$$
where $m, n$ are integers chosen earlier so as to satisfy \eqref{eqn:div-cond-mn}. Write
$$
    u_{11}' = u_{11} + p^{k}a, \qquad
    u_{12}' = u_{12} + p^{k}b, \qquad
    u_{21}' = u_{21} + p^{k}c, \qquad
    u_{22}' = u_{22}
$$
where $a$, $b$, $c$ are integers to be determined.
\begin{itemize}
\item For Condition~\ref{anti-aut:odd-p_(1,0)}, we have
\begin{align*}
    &(u_{11}'^2 + 1)\theta_1 + u_{12}'^2\theta_2 \\
    &= (
        (u_{11}^2 + 1)\theta_1 + u_{12}^2\theta_2
    ) + 2p^k(
        u_{11}a\theta_1 + u_{12}b\theta_2
    ) + p^{2k}(
        a^2\theta_1 + b^2\theta_2
    ).
\end{align*}
By the induction hypothesis, the first term vanishes modulo $p^{k}$. The last term vanishes modulo $p^{k+1}$. Thus, to ensure the condition holds, namely
$$
    p^{k+1}\mid (u_{11}'^2 + 1)\theta_1 + u_{12}'^2\theta_2,
$$
it suffices to show that for any $\gamma_1\in\bZ$, there exist $a,b\in\bZ$ such that
\begin{equation}
\label{eqn:gamma_1}
    u_{11}a\theta_1 + u_{12}b\theta_2
    \equiv (ma + nb\theta_2)\theta_1
    \equiv\gamma_1\pmod{p}.
\end{equation}

\item Similarly, to ensure Condition~\ref{anti-aut:odd-p_(0,1)} holds, it suffices to show that for any $\gamma_2\in\bZ$, there exists $c\in\bZ$ such that
\begin{equation}
\label{eqn:gamma_2}
    u_{21}c\theta_1
    \equiv -nc\theta_1\theta_2
    \equiv\gamma_2\pmod{p}.
\end{equation}

\item For Condition~\ref{anti-aut:odd-p_(1,1)}, we have
\begin{align*}
    & u_{11}'u_{21}'\theta_1 + u_{12}'u_{22}'\theta_2 \\
    &= (u_{11} + p^{k}a)(u_{21} + p^{k}c)\theta_1
        + (u_{12} + p^{k}b)u_{22}\theta_2 \\
    &= (u_{11}u_{21}\theta_1 
            + u_{12}u_{22}\theta_2
        ) + p^k(
            u_{11}c\theta_1
            + u_{21}a\theta_1
            + u_{22}b\theta_2
        ) + p^{2k}ac\theta_1.
\end{align*}
Hence, for any $\gamma_3\in\bZ$, we need $a$, $b$, $c$ to satisfy
\begin{equation}
\label{eqn:gamma_3}
    u_{11}c\theta_1
        + u_{21}a\theta_1
        + u_{22}b\theta_2
    \equiv mc\theta_1
        - na\theta_1\theta_2
        + mb\theta_2
    \equiv \gamma_3\pmod{p}.
\end{equation}
\end{itemize}

The following choice satisfies \eqref{eqn:gamma_1}--\eqref{eqn:gamma_3}:
$$
    a = -m\theta_1^{-1}\gamma_1
        + m\theta_2^{-1}\gamma_2
        + n\gamma_3, \quad
    b = (\theta_1^{-1}\gamma_1 - ma)(n\theta_2)^{-1}, \quad
    c = -(n\theta_1\theta_2)^{-1}\gamma_2.
$$
The verification of \eqref{eqn:gamma_1} and \eqref{eqn:gamma_2} is straightforward, while \eqref{eqn:gamma_3} follows from the divisibility condition \eqref{eqn:div-cond-mn}. Finally, by construction,
$$
    u_{11}'u_{22}' - u_{12}'u_{21}'
    \equiv  u_{11}u_{22} - u_{12}u_{21}
    \not\equiv 0\pmod{p},
$$
so Condition~\ref{anti-aut:odd-p_aut} holds. This completes the proof.
\end{proof}

\subsection{Anti-automorphisms on 2-Sylow summands}

In the following, we classify the conditions under which the local component $(A_2, q_2)$ admits an anti-automorphism.

\begin{lem}
\label{lem:p=2_cyclic}
Assume that $A_2\cong\bZ/2^k\bZ$ for some integer $k$ and there exists an odd integer~$\theta$ such that
$$
    q_2(\overline{a})
    = \frac{\theta a^2}{2^k}
    \in\bQ^{(2)}/2\bZ
    \qquad\text{for each}\qquad
    \overline{a}\in A_2.
$$
Then $(A_2, q_2)$ has no anti-automorphism.
\end{lem}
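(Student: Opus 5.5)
Following the proof of Lemma~\ref{lem:p-odd_cyclic}, every automorphism $f$ of $\bZ/2^k\bZ$ is determined by $f(\overline{1}) = \overline{u}$ with $u$ odd. The plan is to show that the condition $q_2 = -q_2\circ f$ can never be satisfied.

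First, evaluate the condition on the generator $\overline{1}$. Since $\overline{a}\mapsto\overline{u}\overline{a}$ and $q_2$ is a quadratic form, agreement on $\overline{1}$ already forces agreement on every $\overline{a}$: the value at $\overline{a}$ is $a^2$ times the value at $\overline{1}$, interpreted in $\bQ^{(2)}/2\bZ$. Multiplication by the integer $a^2$ is well defined on $\bQ/2\bZ$, so this reduction is legitimate. The single condition to examine is therefore
$$
    \frac{\theta u^2}{2^k}\equiv -\frac{\theta}{2^k}\pmod{2\bZ}.
$$
Clearing denominators and using that $\theta$ is odd, hence invertible modulo $2^{k+1}$, this is equivalent to
$$
    2^{k+1}\mid u^2+1.
$$

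Next, rule this out using parity. Since $u$ is odd, $u^2\equiv 1\pmod 8$, so $u^2+1\equiv 2\pmod 8$. Thus $u^2+1$ is exactly divisible by $2$. The divisibility $2^{k+1}\mid u^2+1$ would force $k+1\leq 1$, i.e.\ $k=0$.

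The only point needing care is this degenerate case. When $k\geq1$ there is a genuine contradiction, so no anti-automorphism exists. When $k=0$ the group is trivial and the statement is vacuous or excluded by convention. I will note that the lemma implicitly assumes $k\geq1$, since otherwise $A_2$ is not a nontrivial local component. Beyond this convention, I expect no real obstacle: the entire argument is the observation that $-1$ is not a square modulo $4$, in the refined form $u^2+1\equiv2\pmod 8$.
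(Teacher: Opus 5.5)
Your proposal is correct and follows the paper's proof essentially verbatim: both reduce the anti-automorphism condition to $2^{k+1}\mid u^2+1$ for odd $u$ and rule it out by parity. The paper uses $u^2\equiv 1\pmod{4}$, which already suffices, so your mod-$8$ refinement is harmless but not needed; your remark that $k\geq 1$ is implicitly assumed is a fair clarification the paper leaves unstated.
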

\begin{proof}
In this case, an anti-automorphism exists if and only if there exists an odd integer~$u$ such that
$$
    -\frac{\theta}{2^k}
    = \frac{\theta u^2}{2^k}\pmod{2\bZ},
    \qquad\text{or equivalently,}\qquad
    2^{k+1}\mid u^2+1.
$$
This is impossible since $u^2\equiv1\pmod{4}$ for all odd $u$.
Hence, no anti-automorphism is allowed in this case.
\end{proof}

Next, we treat the case where $A_2$ is a product and  the form $q_2$ is diagonalized.

\begin{lem}
\label{lem:p=2_diag_k2=k1}
Assume that $A_2\cong(\bZ/2^{k}\bZ)\oplus(\bZ/2^{k}\bZ)$ for some integer $k$ and there exist odd integers $\theta_1, \theta_2$ such that
$$
    q_2(\overline{a}_1, \overline{a}_2)
    = \frac{\theta_1 a_1^2}{2^{k}}
        + \frac{\theta_2 a_2^2}{2^{k}}
    \in\bQ^{(2)}/2\bZ
    \qquad\text{for each}\qquad
    (\overline{a}_1, \overline{a}_2)\in A_2.
$$
Then $(A_2, q_2)$ admits an anti-automorphism if and only if $\theta_1 + \theta_2\equiv 0\pmod{4}$.
\end{lem}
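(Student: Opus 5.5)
Write $q_2$ via its bilinear form $b(x,y)=\frac{\theta_1x_1y_1+\theta_2x_2y_2}{2^k}\in\bQ/\bZ$. Mimicking Lemma~\ref{lem:p-odd_diag}, I encode an endomorphism $f$ by an integer matrix $(u_{ij})$ with $f(\overline{1},\overline{0})=(\overline{u}_{11},\overline{u}_{12})$ and $f(\overline{0},\overline{1})=(\overline{u}_{21},\overline{u}_{22})$. The condition $q_2\circ f=-q_2$ then becomes three congruences, plus an invertibility condition:
\begin{enumerate}[label=(\roman*)]
    \item $2^{k+1}\mid(u_{11}^2+1)\theta_1+u_{12}^2\theta_2$;
    \item $2^{k+1}\mid u_{21}^2\theta_1+(u_{22}^2+1)\theta_2$;
    \item $2^{k}\mid u_{11}u_{21}\theta_1+u_{12}u_{22}\theta_2$;
    \item $u_{11}u_{22}-u_{12}u_{21}$ is odd.
\end{enumerate}
Conditions (i) and (ii) are the values on the basis vectors modulo $2\bZ$, and (iii) is the bilinear cross term. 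Note that the cross term in $q_2$ appears with the factor $2$, so (iii) is only needed modulo $2^k$.

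\textbf{Necessity.} Suppose $f$ exists. By (iv), exactly one of the following holds: $u_{11},u_{22}$ are odd and $u_{12}u_{21}$ is even, or $u_{12},u_{21}$ are odd and $u_{11}u_{22}$ is even.
\begin{itemize}
    \item If $u_{11}$ is odd, then $u_{11}^2+1\equiv2\pmod 4$. For (i) to hold we need $u_{12}^2\theta_2\equiv 2\pmod 4$ (already for $k\ge1$), which is impossible since squares are $0$ or $1\pmod 4$.
    \item Hence $u_{11}$ is even, and likewise $u_{22}$ is even. So $u_{12},u_{21}$ are odd.
    \item Reducing (i) modulo $4$ gives $\theta_1(u_{11}^2+1)+\theta_2\equiv \theta_1+\theta_2+\theta_1u_{11}^2\pmod 4$, with $u_{11}^2\equiv0\pmod 4$.
\end{itemize}
Since $k\geq1$ we have $4\mid 2^{k+1}$, so $\theta_1+\theta_2\equiv0\pmod 4$. (The case $k=0$ is trivial; I would adopt the convention $k\ge1$.)

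\textbf{Sufficiency, base case.} Assume $\theta_1+\theta_2\equiv0\pmod 4$. The natural candidate is the swap twisted by units: $u_{11}=u_{22}=0$, $u_{12}=x$, $u_{21}=y$ with $x,y$ odd. Then (iii) holds automatically and (iv) holds since $xy$ is odd. Conditions (i) and (ii) become
$$\theta_1+x^2\theta_2\equiv0 \quad\text{and}\quad y^2\theta_1+\theta_2\equiv0\pmod{2^{k+1}},$$
that is, $x^2\equiv-\theta_1\theta_2^{-1}$ and $y^2\equiv-\theta_2\theta_1^{-1}\pmod{2^{k+1}}$. An odd number is a square modulo $2^{k+1}$ exactly when it is $\equiv1\pmod 8$ (for $k+1\ge3$). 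The hypothesis only gives $-\theta_1\theta_2^{-1}\equiv1\pmod 4$, so this choice works only when the ratio is $\equiv1\pmod 8$.

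\textbf{Sufficiency, general case (main obstacle).} When $-\theta_1/\theta_2\equiv5\pmod 8$, I must allow nonzero even diagonal entries. Write $u_{11}=2s$ and $u_{22}=2t$. Then (i) reads $\theta_1(4s^2+1)+\theta_2x^2\equiv0\pmod{2^{k+1}}$. Taking $s$ odd gives $4s^2+1\equiv5\pmod 8$, which fixes the residue modulo $8$. For higher powers, note that $4s^2+1$ ranges over all values $\equiv5\pmod 8$ up to squares (squares of odd numbers fill $1\pmod 8$). Since $x^2$ and $y^2$ supply arbitrary square classes, the unit $-\theta_1(4s^2+1)/\theta_2$ can be made a square modulo $2^{k+1}$.

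The remaining difficulty is to satisfy (iii), namely $2st\cdot\ldots$. Concretely, (iii) becomes $2s y\theta_1+2t x\theta_2\equiv0\pmod{2^k}$, so one can solve for $t$ from $s$, $x$, $y$. This $t$ must then be compatible with (ii). To handle this I would use a Hensel-type induction on $k$, as in Lemma~\ref{lem:p-odd_diag}: start from a solution modulo small powers, for instance $k=1,2$ found by hand, and then lift by adding $2^{k}$-multiples to $u_{11},u_{12},u_{21}$. I expect this to work because the linearized system has odd coefficients $x,y$. This lifting step, handling the $2$-adic loss from the factor $2$ in derivatives, is where I expect the real work to be.
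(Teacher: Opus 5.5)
You have the right strategy, but the proof is incomplete: the case $\theta_1+\theta_2\equiv 4\pmod 8$ of the ``if'' direction is only sketched.

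\textbf{What is correct.} Your conditions (i)--(iv) and your necessity argument are correct. Your route to necessity differs slightly from the paper's. The paper checks that, when $\theta_1\equiv\theta_2\pmod 4$, condition (i) fails modulo $4$ for every admissible first row. You instead first show that any anti-automorphism has even diagonal and odd off-diagonal entries. That is exactly the parity shape the paper's induction relies on. Your restriction to $k\ge 1$ is also necessary, since for $k=0$ the group is trivial. Your twisted swap, with $y=x^{-1}$, correctly settles the subcase $\theta_1+\theta_2\equiv 0\pmod 8$ for all $k$ at once.

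\textbf{The gap.} In the subcase $\theta_1+\theta_2\equiv 4\pmod 8$ you stop at ``I expect this to work''. Choosing $x$ and $y$ independently as square roots and then solving (iii) for $t$ leaves (ii) uncontrolled, so nothing is proved there yet. Either of two arguments closes it.

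\emph{Direct fix, closest to your own line.} The missing idea is to couple $x$ and $y$.
\begin{itemize}
\item Take $u_{11}=u_{22}=2$.
\item Take an odd $x$ with $x^2\equiv -5\theta_1\theta_2^{-1}\pmod{2^{k+1}}$. This is possible because that residue is $\equiv 1\pmod 8$.
\item Set $u_{12}=x$ and $u_{21}\equiv 5x^{-1}\pmod{2^{k+1}}$.
\end{itemize}
Then each condition follows from (i):
\begin{itemize}
\item (i) holds by the choice of $x$.
\item (ii) holds because $x^2(u_{21}^2\theta_1+5\theta_2)\equiv 5(5\theta_1+x^2\theta_2)\pmod{2^{k+1}}$.
\item (iii) holds because $x(2u_{21}\theta_1+2x\theta_2)\equiv 2(5\theta_1+x^2\theta_2)\pmod{2^{k+2}}$.
\item (iv) holds because $4-xu_{21}\equiv -1$.
\end{itemize}
This is what your equation for $t$ gives ($t=s$) once you impose $xy\equiv 4s^2+1$.

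\emph{Lifting, as in the paper.} Alternatively, carry out the lifting you sketch; this is exactly the paper's proof.
\begin{itemize}
\item The base case $k=2$ is $u_{11}=u_{22}=2$, $u_{12}=u_{21}=1$.
\item For $k\ge 2$, replace $u_{11},u_{12},u_{21}$ by $u_{11}+2^ka$, $u_{12}+2^kb$, $u_{21}+2^kc$.
\item There is no $2$-adic loss. Conditions (i) and (ii) are already required modulo $2^{k+1}$, so the linear terms $2^{k+1}(u_{11}a\theta_1+u_{12}b\theta_2)$ and $2^{k+1}u_{21}c\theta_1$ have exactly the right size.
\item The quadratic terms $2^{2k}(\cdots)$ vanish modulo $2^{k+2}$ once $k\ge 2$.
\item Since $u_{11},u_{22}$ are even and $u_{12},u_{21}$ are odd, the linearized conditions modulo $2$ decouple into $b\equiv\gamma_1$, $c\equiv\gamma_2$, $a\equiv\gamma_3$, and this parity pattern is preserved by the lift.
\end{itemize}
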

\begin{proof}
Consider a group homomorphism $f\colon A_2\to A_2$ with
$$
    f(\overline{1}, \overline{0}) = (\overline{u}_{11}, \overline{u}_{12}), \qquad
    f(\overline{0}, \overline{1}) = (\overline{u}_{21}, \overline{u}_{22}).
$$
A straightforward computation shows that $f$ is an anti-automorphism if and only if the following conditions hold:
\begin{enumerate}[label=(\roman*)]
    \item\label{A2_diag_k2=k1_(1,0)}
    $2^{k+1}\mid(u_{11}^2 + 1)\theta_1 + u_{12}^2\theta_2$,
    \item\label{A2_diag_k2=k1_(0,1)}
    $2^{k+1}\mid u_{21}^2\theta_1 + (u_{22}^2 + 1)\theta_2$.
    \item\label{A2_diag_k2=k1_(1,1)}
    $2^k\mid u_{11}u_{21}\theta_1 + u_{12}u_{22}\theta_2$.
    \item\label{A2_diag_k2=k1_aut}
    $2\nmid u_{11}u_{22} - u_{12}u_{21}$.
\end{enumerate}
We claim that these conditions imply $\theta_1 + \theta_2\equiv 0\pmod{4}$. Note that Condition~\ref{A2_diag_k2=k1_aut} implies
$$
    (u_{11}^2, u_{12}^2)\equiv (1, 0), (0, 1),\text{ or }(1, 1)\pmod{4}.
$$
If $\theta_1 + \theta_2\not\equiv 0\pmod{4}$, then $\theta_1\equiv\theta_2\equiv \pm 1\pmod{4}$. 
This implies that $(u_{11}^2+1)\theta_1+u_{12}^2\theta_2$ is not divisible by $4$, a contradiction. Hence $\theta_1 + \theta_2\equiv 0\pmod{4}$.

We now prove that if $\theta_1 + \theta_2\equiv 0\pmod{4}$, then there exist integers $u_{ij}$ satisfying Conditions~\ref{A2_diag_k2=k1_(1,0)}--\ref{A2_diag_k2=k1_aut}.

\medskip
\noindent\textbf{The case $\boldsymbol{k = 1}$.}
Define $u_{11} = u_{22} = 0$ and $u_{12} = u_{21} = 1$. Then Conditions~\ref{A2_diag_k2=k1_(1,0)} and \ref{A2_diag_k2=k1_(0,1)} hold since $\theta_1 + \theta_2\equiv 0\pmod{4}$. Condition~\ref{A2_diag_k2=k1_(1,1)} holds because $u_{11}u_{21}\theta_1+u_{12}u_{22}\theta_2 = 0$, and Condition~\ref{A2_diag_k2=k1_aut} holds since $u_{11}u_{22} - u_{12}u_{21} = -1$.

\medskip
\noindent\textbf{The case $\boldsymbol{k = 2}$.}
Define
$$
    u_{11} = u_{22} = \begin{cases}
        0 & \text{if}\quad
            \theta_1 + \theta_2\equiv 0\pmod{8}, \\
        2 & \text{if}\quad
            \theta_1 + \theta_2\equiv 4\pmod{8},
    \end{cases}
$$
and let $u_{12} = u_{21} = 1$ as before.
\begin{itemize}
\item If $\theta_1 + \theta_2\equiv 0\pmod{8}$, then
\begin{gather*}
    (u_{11}^2 + 1)\theta_1 + u_{12}^2\theta_2
    = \theta_1 + \theta_2
    \equiv 0\pmod{8}, \\
    u_{21}^2\theta_1 + (u_{22}^2 + 1)\theta_2
    = \theta_1 + \theta_2
    \equiv 0\pmod{8},
\end{gather*}
and $u_{11}u_{21}\theta_1 + u_{12}u_{22}\theta_2 = 0$.
\item If $\theta_1 + \theta_2\equiv 4\pmod{8}$, then
\begin{gather*}
    (u_{11}^2 + 1)\theta_1 + u_{12}^2\theta_2
    = 5\theta_1 + \theta_2
    \equiv 4\theta_1 + 4 \pmod{8}, \\
    u_{21}^2\theta_1 + (u_{22}^2 + 1)\theta_2
    = \theta_1 + 5\theta_2
    \equiv 4 + 4\theta_2 \pmod{8},
\end{gather*}
which are both $0\pmod{8}$ since $\theta_1$ and $\theta_2$ are odd. Moreover,
$$
    u_{11}u_{21}\theta_1 + u_{12}u_{22}\theta_2
    = 2\theta_1 + 2\theta_2\equiv 0\pmod{4}.
$$
\end{itemize}
Thus Conditions~\ref{A2_diag_k2=k1_(1,0)}--\ref{A2_diag_k2=k1_(1,1)} hold. Condition~\ref{A2_diag_k2=k1_aut} holds since $u_{11}u_{22}$ is even and $u_{12}u_{21} = 1$.

\medskip
\noindent\textbf{Inductive step.}
Assume that there exist even integers $u_{11}, u_{22}$ and odd integers $u_{12}, u_{21}$ satisfying Conditions~\ref{A2_diag_k2=k1_(1,0)}--\ref{A2_diag_k2=k1_aut} for some $k\geq 2$. Write
$$
    u_{11}' = u_{11} + 2^{k}a, \qquad
    u_{12}' = u_{12} + 2^{k}b, \qquad
    u_{21}' = u_{21} + 2^{k}c, \qquad
    u_{22}' = u_{22}
$$
where $a$, $b$, $c$ are integers to be determined.
\begin{itemize}
\item For Condition~\ref{A2_diag_k2=k1_(1,0)}, we have
\begin{align*}
    &(u_{11}'^2 + 1)\theta_1 + u_{12}'^2\theta_2 \\
    &= (
        (u_{11}^2 + 1)\theta_1 + u_{12}^2\theta_2
    ) + 2^{k+1}(
        u_{11}a\theta_1 + u_{12}b\theta_2
    ) + 2^{2k}(
        a^2\theta_1 + b^2\theta_2
    ).
\end{align*}
By the induction hypothesis, the first term vanishes modulo $2^{k+1}$. Note that $k\geq 2$ implies $2k\geq k+2$, so the last term vanishes modulo $2^{k+2}$. Therefore, to ensure
$$
    2^{k+2}\mid (u_{11}'^2 + 1)\theta_1 + u_{12}'^2\theta_2,
$$
it suffices to show that for any $\gamma_1\in\bZ$, there exist $a,b\in\bZ$ such that
\begin{equation}
\label{eqn:gamma1-mod2}
    u_{11}a\theta_1 + u_{12}b\theta_2
    \equiv\gamma_1\pmod{2}.
\end{equation}

\item Similarly, to ensure Condition~\ref{A2_diag_k2=k1_(0,1)} holds, it suffices to show that for any $\gamma_2\in\bZ$, there exists $c\in\bZ$ such that
\begin{equation}
\label{eqn:gamma2-mod2}
    u_{21}c\theta_1
    \equiv\gamma_2\pmod{2}.
\end{equation}

\item For Condition~\ref{A2_diag_k2=k1_(1,1)}, we have
\begin{align*}
    & u_{11}'u_{21}'\theta_1 + u_{12}'u_{22}'\theta_2 \\
    &= (u_{11} + 2^{k}a)(u_{21} + 2^{k}c)\theta_1
        + (u_{12} + 2^{k}b)u_{22}\theta_2 \\
    &= (u_{11}u_{21}\theta_1 + u_{12}u_{22}\theta_2) + 2^k(
            u_{11}c\theta_1
            + u_{21}a\theta_1
            + u_{22}b\theta_2
        ) + 2^{2k}ac\theta_1.
\end{align*}
Hence, for any $\gamma_3\in\bZ$, we need $a$, $b$, $c$ to satisfy
\begin{equation}
\label{eqn:gamma3-mod2}
    u_{11}c\theta_1
    + u_{21}a\theta_1
    + u_{22}b\theta_2
    \equiv \gamma_3\pmod{2}.
\end{equation}
\end{itemize}
Using the facts that $u_{11},u_{22}$ are even, and $u_{12},u_{21},\theta_1,\theta_2$ are odd, it is straightforward to verify that the choice
$$
    a = \gamma_3, \quad
    b = \gamma_1, \quad
    c = \gamma_2,
$$
satisfies \eqref{eqn:gamma1-mod2}--\eqref{eqn:gamma3-mod2}. Finally, by construction,
$$
    u_{11}'u_{22}' - u_{12}'u_{21}'
    \equiv  u_{11}u_{22} - u_{12}u_{21}
    \not\equiv 0\pmod{2},
$$
so Condition~\ref{A2_diag_k2=k1_aut} holds. This completes the proof.
\end{proof}

\begin{lem}
\label{lem:p=2_diag}
Assume that $A_2\cong(\bZ/2^{k_1}\bZ)\oplus(\bZ/2^{k_2}\bZ)$ for some integers $k_1, k_2$ and there exist odd integers $\theta_1, \theta_2$ such that
$$
    q_2(\overline{a}_1, \overline{a}_2)
    = \frac{\theta_1 a_1^2}{2^{k_1}}
        + \frac{\theta_2 a_2^2}{2^{k_2}}
    \in\bQ^{(2)}/2\bZ
    \qquad\text{for each}\qquad
    (\overline{a}_1, \overline{a}_2)\in A_2.
$$
Then $(A_2, q_2)$ admits an anti-automorphism if and only if one of the following holds:
\begin{itemize}
    \item $A_2\cong(\bZ/2^{k}\bZ)\oplus(\bZ/2^{k}\bZ)$ for some integer $k$ and $\theta_1 + \theta_2\equiv 0\pmod{4}$.
    \item $A_2\cong(\bZ/2^{k}\bZ)\oplus(\bZ/2^{k+1}\bZ)$ for some integer $k$ and $\theta_1 + \theta_2\equiv 0\pmod{4}$.
\end{itemize}
\end{lem}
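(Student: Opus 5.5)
We may assume $k_1\le k_2$, and we split into three cases according to $k_2-k_1$. If $k_1=k_2$, the statement is exactly Lemma~\ref{lem:p=2_diag_k2=k1}, so only the cases $k_2-k_1\ge 2$ and $k_2=k_1+1$ need work. As in the proofs of Lemmas~\ref{lem:p-odd_diag} and~\ref{lem:p=2_diag_k2=k1}, we write
$$
    f(\overline{1},\overline{0})=(\overline{v}_1,\overline{v}_2),
    \qquad
    f(\overline{0},\overline{1})=(\overline{u}_1,\overline{u}_2).
$$
Since $f$ must respect the orders of elements, $v_2$ is divisible by $2^{k_2-k_1}$. Because $(\overline{0},\overline{1})$ has order $2^{k_2}>2^{k_1}$, its image has order $2^{k_2}$, which forces $u_2$ to be odd. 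Then $f$ is bijective exactly when $v_1$ and $u_2$ are both odd.

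\emph{Case $k_2\ge k_1+2$.} We evaluate $-q_2=q_2\circ f$ at $(\overline{0},\overline{1})$ and clear denominators, which gives
$$
    2^{k_2+1}\mid \theta_2(u_2^2+1)+2^{k_2-k_1}\theta_1u_1^2.
$$
Since $u_2$ is odd, $\theta_2(u_2^2+1)\equiv 2\pmod 4$. The second term is divisible by $4$. Hence the sum is $\equiv 2\pmod 4$, which contradicts divisibility by $2^{k_2+1}\ge 4$. So no anti-automorphism exists in this case.

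\emph{Case $k_2=k_1+1$, necessity.} Write $k=k_1$. The same evaluation now gives
$$
    2^{k+2}\mid\theta_2(u_2^2+1)+2\theta_1u_1^2 .
$$
Since $k\ge1$, we may reduce modulo $8$, and $u_2^2+1\equiv 2\pmod 8$. This yields $\theta_2+\theta_1u_1^2\equiv 0\pmod 4$. That congruence forces $u_1$ to be odd, so $u_1^2\equiv1\pmod 8$, and therefore $\theta_1+\theta_2\equiv0\pmod4$.

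\emph{Case $k_2=k_1+1$, sufficiency.} Conversely, assume $\theta_1+\theta_2\equiv0\pmod4$. We must produce $v_1,v_2,u_1,u_2$ with $v_1,u_2$ odd and $v_2$ even satisfying three conditions:
\begin{itemize}
\item[] $2^{k+1}\mid \theta_1(v_1^2+1)+\theta_2v_2^2/2$ (the diagonal condition at $(\overline{1},\overline{0})$),
\item[] $2^{k+2}\mid \theta_2(u_2^2+1)+2\theta_1u_1^2$ (the diagonal condition at $(\overline{0},\overline{1})$),
\item[] the bilinear cross condition $2^{k}\mid \theta_1v_1u_1+\theta_2v_2u_2/2$, obtained by evaluating at $(\overline{1},\overline{1})$.
\end{itemize}
I would mimic Lemma~\ref{lem:p=2_diag_k2=k1}. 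First I would handle small $k$ by explicit choices such as $u_1=u_2=1$ and $v_2=-2$ (i.e.\ $w=-1$ with $v_2=2w$), with $v_1\in\{1,3\}$ adjusted according to $\theta_1+\theta_2 \bmod 8$. Then I would run a Hensel-type induction on $k$: modify $v_1,v_2,u_1$ by multiples of $2^k$ and check that the linear corrections modulo $2$ can be solved using the parity pattern ($v_1,u_1,u_2$ odd). There is a subtlety: passing from $k$ to $k+1$ changes both group orders simultaneously, so the scaling of the divisibility conditions must be tracked carefully.

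I expect this sufficiency construction, and in particular keeping the three $2$-adic conditions consistent through the induction, to be the main obstacle. If the induction becomes unwieldy, a cleaner fallback is the following. Realize $(A_2,q_2)$ as the discriminant form of the rank-two $2$-adic lattice $\langle 2^{k}\theta_1^{-1}\rangle\oplus\langle 2^{k+1}\theta_2^{-1}\rangle$. Then use the fact that, because $\theta_1+\theta_2\equiv 0\pmod 4$, the form $\theta_1x^2+2\theta_2y^2$ is $2$-adically equivalent to its negative. An isometry between the lattice and its negative induces the desired anti-automorphism of $(A_2,q_2)$.
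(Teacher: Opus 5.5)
The step that fails is your explicit base case for sufficiency. For every $k\ge 2$, condition (ii) reduced modulo $16$ reads $\theta_2(u_2^2+1)+2\theta_1u_1^2\equiv 0\pmod{16}$. Since $2u_1^2\equiv 2\pmod{16}$ for odd $u_1$, this depends only on $u_2 \bmod 8$, and with $u_2=1$ it says $8\mid\theta_1+\theta_2$. So when $\theta_1+\theta_2\equiv 4\pmod 8$, your choice $u_1=u_2=1$, $w=-1$ already fails at $k=2$.

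No choice of $v_1$ helps, because $v_1$ does not enter (ii). At $k=2$, condition (i) holds for every odd $v_1$, and the cross condition just forces $v_1\equiv 3\pmod 4$, so the dependence on $\theta_1+\theta_2 \bmod 8$ is not in $v_1$. Your lifting scheme never changes $u_2$, so the wrong residue of $u_2$ modulo $8$ carries over to every $k\ge 2$. In that case you must take $u_2\equiv\pm3\pmod 8$, e.g.\ $u_1=1$, $w=-1$, $u_2=5$, $v_1=3$ at $k=2$; this is why the paper sets $u_{22}=5$ when $\theta_1+\theta_2\equiv 4\pmod 8$. The case $k=2$ really does have to be done by hand, because the lifting step needs the error terms $2^{2k}$ to vanish modulo $2^{k+2}$.

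With that repair, your lifting works for $k\ge 2$. Perturb $v_1$, $w=v_2/2$, $u_1$ by $2^ka$, $2^kb$, $2^kc$. Then $a$ fixes (i) since $v_1$ is odd, $c$ fixes (ii) since $u_1$ is odd, and $b$ fixes the cross condition since $u_2$ is odd. It must be $w$, not $v_2$, that moves by $2^k$: an odd multiple of $2^k$ added to $v_2$ changes the cross term by an odd multiple of $2^{k-1}$. This is the paper's induction, except that the paper perturbs $u_{22}$ by $2^{k+1}c$ and keeps $u_{21}$ fixed. Your necessity arguments are correct and match the paper's. Getting $u_2$ odd from orders of elements, and using the criterion that $f$ is bijective if and only if $v_1u_2$ is odd, is a tidier substitute for the paper's direct injectivity check.

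Your $2$-adic fallback is correct and is a genuinely different, shorter route to sufficiency, but the ``fact'' it rests on should be proved, not asserted. One justification is the Conway--Sloane $2$-adic symbol $1^{\epsilon_1}_{\theta_1}2^{\epsilon_2}_{\theta_2}$. Negation keeps both signs, since each depends only on whether $\theta_i\equiv\pm1$ or $\pm3\pmod 8$, and it negates the compartment oddity $\theta_1+\theta_2$. So the form is equivalent to its negative as soon as $2(\theta_1+\theta_2)\equiv 0\pmod 8$.

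A more elementary justification is an explicit isometry. The unit $-1-2\theta_2\theta_1^{-1}\in\bZ_2$ is $\equiv 1\pmod 8$ precisely when $\theta_1+\theta_2\equiv 0\pmod 4$, so it equals $x^2$ for some $x\in\bZ_2$. Then $P=\left(\begin{smallmatrix}x&2\theta_2\theta_1^{-1}\\1&-x\end{smallmatrix}\right)$ has $\det P=1$ and $P^{T}\mathrm{diag}(\theta_1,2\theta_2)P=-\mathrm{diag}(\theta_1,2\theta_2)$. Apply this with $\theta_i^{-1}$ in place of $\theta_i$ for your lattice; the hypothesis is unchanged because $\theta_i^{-1}\equiv\theta_i\pmod 8$. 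The induced map on $L^*/L$ is the anti-automorphism. Truncating $x$ modulo a high enough power of $2$ gives integer solutions of your three congruences for each $k$ directly, with no induction.

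Compared with the paper, this replaces two base cases and a three-variable Hensel induction by a single $2$-adic square root, and it shows where the condition $\theta_1+\theta_2\equiv 0\pmod 4$ comes from. The paper's route needs more case analysis but stays within elementary congruences, like the rest of the section.
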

\begin{proof}
Assume without loss of generality that $k_1\leq k_2$. We claim that $k_2\leq k_1 + 1$. Suppose, to the contrary, that $k_2\geq k_1 + 2$. Writing $f(\overline{0}, \overline{1}) = (\overline{u}_1, \overline{u}_2)$, the relation $-q_2 = q_2\circ f$ evaluated at $(\overline{0}, \overline{1})$ yields
$$
    - \frac{\theta_2}{2^{k_2}}
    = \frac{\theta_1u_1^2}{2^{k_1}}
        + \frac{\theta_2u_2^2}{2^{k_2}}
    \pmod{2\bZ},
$$
which implies $4\mid u_2^2 + 1$, a contradiction. Hence $k_2 = k_1$ or $k_2 = k_1 + 1$. The case $k_1 = k_2$ is treated in Lemmas~\ref{lem:p=2_diag_k2=k1}. We address the case $k_2 = k_1 + 1$ below.

Denote $k\colonequals k_1$. Then $A_2\cong(\bZ/2^{k}\bZ)\oplus(\bZ/2^{k+1}\bZ)$ and $q_2$ has the form
$$
    q_2(\overline{a}_1, \overline{a}_2)
    = \frac{\theta_1a_1^2}{2^k} + \frac{\theta_2a_2^2}{2^{k+1}}.
$$
Consider a group homomorphism $f\colon A_2\to A_2$ with
$$
    f(\overline{1}, \overline{0}) = (\overline{u}_{11}, \overline{u}_{12}), \qquad
    f(\overline{0}, \overline{1}) = (\overline{u}_{21}, \overline{u}_{22}).
$$
Note that $2\mid u_{12}$ since $(\overline{1}, \overline{0})$ has order $2^k$. Write
$$
    u_{12} = 2t_{12}.
$$
If $f$ is an anti-automorphism, then necessarily $-q_2 = q_2\circ f$, and a straightforward computation shows that this identity holds if and only if
\begin{enumerate}[label=(\roman*)]
    \item\label{A2_diag_k2=k1+1_(1,0)}
    $2^{k+1}\mid(u_{11}^2 + 1)\theta_1 + 2t_{12}^2\theta_2$,
    \item\label{A2_diag_k2=k1+1_(0,1)}
    $2^{k+2}\mid 2u_{21}^2\theta_1 + (u_{22}^2 + 1)\theta_2$,
    \item\label{A2_diag_k2=k1+1_(1,1)}
    $2^k\mid u_{11}u_{21}\theta_1 + t_{12}u_{22}\theta_2$.
\end{enumerate}
Condition~\ref{A2_diag_k2=k1+1_(1,0)} implies $u_{11}$ is odd. It follows that $(u_{11}^2 + 1)\theta_1\equiv 2\pmod{4}$, which implies $t_{12}$ is odd. Similarly, Condition~\ref{A2_diag_k2=k1+1_(0,1)} implies $u_{21}$ and $u_{22}$ are odd. Hence $u_{21}^2\equiv u_{22}^2\equiv 1\pmod{8}$, and by Condition~\ref{A2_diag_k2=k1+1_(0,1)} again, $\theta_1 + \theta_2\equiv 0\pmod{4}$.

Next, we show that if $u_{11}$, $t_{12}$, $u_{21}$, $u_{22}$ are odd, then the map $f$ is injective, and hence an automorphism. Indeed, if
$$
    f(\overline{a}_1, \overline{a}_2)
    = \overline{a}_1(\overline{u}_{11}, 2\overline{t}_{12})
        + \overline{a}_2(\overline{u}_{21}, \overline{u}_{22})
    = (\overline{0}, \overline{0}),
$$
then
$$
    2^k\mid a_1u_{11} + a_2u_{21}
    \qquad\text{and}\qquad
    2^{k+1}\mid 2a_1t_{12} + a_2u_{22}.
$$
Hence
$$
    2^k\mid u_{22}(a_1u_{11} + a_2u_{21})
        - u_{21}(2a_1t_{12} + a_2u_{22})
    = a_1(u_{11}u_{22} - 2t_{12}u_{21}).
$$
Since $u_{11}u_{22} - 2t_{12}u_{21}$ is odd, it follows that $2^k\mid a_1$. This in turn implies $2^{k+1}\mid a_2$ by the relation $2^{k+1}\mid 2a_1t_{12} + a_2u_{22}$ and the assumption that $u_{22}$ is odd. Therefore,
$$
    (\overline{a}_1, \overline{a}_2)
    = (\overline{0}, \overline{0})
    \in A_2\cong(\bZ/2^{k}\bZ)\oplus(\bZ/2^{k+1}\bZ),
$$
which proves that $f$ is injective.

It remains to show that if $\theta_1 + \theta_2\equiv 0\pmod{4}$, then there exist odd integers $u_{11}$, $t_{12}$, $u_{21}$, $u_{22}$ satisfying Conditions~\ref{A2_diag_k2=k1+1_(1,0)}--\ref{A2_diag_k2=k1+1_(1,1)} with $u_{12} = 2t_{12}$.

\medskip
\noindent\textbf{The case $\boldsymbol{k = 1}$.}
The choice $u_{11} = t_{12} = u_{21} = u_{22} = 1$ provides a solution for this case.

\medskip
\noindent\textbf{The case $\boldsymbol{k = 2}$.}
Take $u_{11} = t_{12} = u_{21} = 1$ and set
$$
    u_{22} = \begin{cases}
        1 & \text{if}\quad
            \theta_1 + \theta_2\equiv 0\pmod{8}, \\
        5 & \text{if}\quad
            \theta_1 + \theta_2\equiv 4\pmod{8}.
    \end{cases}
$$
Then Conditions~\ref{A2_diag_k2=k1+1_(1,0)}--\ref{A2_diag_k2=k1+1_(1,1)} are satisfied.

\medskip
\noindent\textbf{Inductive step.}
Suppose that there exist odd integers $u_{11}$, $t_{12}$, $u_{21}$, $u_{22}$ satisfying Conditions~\ref{A2_diag_k2=k1+1_(1,0)}--\ref{A2_diag_k2=k1+1_(1,1)} for some $k\geq 2$. Set
$$
    u_{11}' = u_{11} + 2^ka, \qquad
    t_{12}' = t_{12} + 2^kb, \qquad
    u_{21}' = u_{21}, \qquad
    u_{22}' = u_{22} + 2^{k+1}c,
$$
where $a$, $b$, $c$ are integers to be determined.
\begin{itemize}
\item For Condition~\ref{A2_diag_k2=k1+1_(1,0)}, we have
\begin{align*}
    &(u_{11}'^2 + 1)\theta_1 + 2t_{12}'^2\theta_2 \\
    &= ((u_{11}^2 + 1)\theta_1 + 2t_{12}^2\theta_2)
        + 2^{k+1}u_{11}a\theta_1
        + 2^{k+2}t_{12}b\theta_2
        + 2^{2k}(a^2\theta_1 + 2b^2\theta_2).
\end{align*}
By the induction hypothesis, the first term vanishes modulo $2^{k+1}$. Note that $k\geq 2$ implies $2k\geq k+2$, so the last term vanishes modulo $2^{k+2}$. Therefore, the condition
$$
    2^{k+2}\mid (u_{11}'^2 + 1)\theta_1 + 2t_{12}'^2\theta_2
$$
holds if for any $\gamma_1\in\bZ$,
$$
    u_{11}a\theta_1
    \equiv a
    \equiv \gamma_1\pmod{2}.
$$
Thus we can take $a = \gamma_1$.

\item By a similar computation, Condition~\ref{A2_diag_k2=k1+1_(0,1)} holds in the inductive step if for any $\gamma_2\in\bZ$,
$$
    u_{22}c\theta_2
    \equiv c
    \equiv \gamma_2\pmod{2}.
$$
Thus we can take $c = \gamma_2$.

\item For Condition~\ref{A2_diag_k2=k1+1_(1,1)}, we have
\begin{align*}
    & u_{11}'u_{21}'\theta_1 + t_{12}'u_{22}'\theta_2 \\
    &= (u_{11} + 2^{k}a)u_{21}\theta_1
        + (t_{12} + 2^{k}b)(u_{22} + 2^{k+1}c)\theta_2 \\
    &= (u_{11}u_{21}\theta_1 + t_{12}u_{22}\theta_2)
        + 2^k(u_{21}a\theta_1 + u_{22}b\theta_2)
        + 2^{k+1}t_{12}c\theta_2
        + 2^{2k+1}bc\theta_2.
\end{align*}
Hence, for any $\gamma_3\in\bZ$, we need $a$ and $b$ to satisfy
$$
    u_{21}a\theta_1 + u_{22}b\theta_2
    \equiv \gamma_1 + b
    \equiv \gamma_3\pmod{2}.
$$
Thus we can take $b = \gamma_3 - \gamma_1$.
\end{itemize}
This completes the proof.
\end{proof}

\begin{lem}
\label{lem:p=2_U-type}
Assume that $A_2\cong(\bZ/2^{k}\bZ)\oplus(\bZ/2^{k}\bZ)$ for some integer $k$ and there exists an odd integer $\theta$ such that
$$
    q_2(\overline{a}_1, \overline{a}_2)
    = \frac{\theta a_1a_2}{2^{k-1}}
    \qquad\text{for each}\qquad
    (\overline{a}_1, \overline{a}_2)\in A_2.
$$
Then $(A_2, q_2)$ admits an anti-automorphism.
\end{lem}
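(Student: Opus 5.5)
The hyperbolic form $q_2(\overline{a}_1,\overline{a}_2) = \theta a_1a_2/2^{k-1}$ is anti-isomorphic to itself by flipping the sign of one coordinate. The plan is therefore to write down this explicit automorphism rather than run an inductive lifting argument as in Lemmas~\ref{lem:p=2_diag_k2=k1} and~\ref{lem:p=2_diag}.

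Define
$$
    f\colon(\bZ/2^k\bZ)\oplus(\bZ/2^k\bZ)\longrightarrow(\bZ/2^k\bZ)\oplus(\bZ/2^k\bZ),
    \qquad
    f(\overline{a}_1,\overline{a}_2) = (-\overline{a}_1,\overline{a}_2).
$$
This map is clearly a group automorphism, since it is its own inverse. Then
$$
    q_2\circ f(\overline{a}_1,\overline{a}_2)
    = \frac{\theta(-a_1)a_2}{2^{k-1}}
    = -q_2(\overline{a}_1,\overline{a}_2)
$$
in $\bQ^{(2)}/2\bZ$.

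The only point requiring care is well-definedness modulo $2\bZ$ when $a_1$ is replaced by another representative. If $a_1$ changes by $2^k m$, the value of $q_2$ changes by $2\theta m a_2\in 2\bZ$. The same holds for $a_2$, and for the representative $-a_1$ of $-\overline{a}_1$. So the identity holds on the level of classes, and $f$ is an anti-automorphism.

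One could equally use the swap $(\overline{a}_1,\overline{a}_2)\mapsto(\overline{a}_2,-\overline{a}_1)$. There is no real obstacle here: once the well-definedness check on representatives is recorded, the lemma follows immediately.
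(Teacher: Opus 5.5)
Your proposal is correct and is essentially the paper's proof. The paper flips the sign of the second coordinate, $(\overline{a}_1,\overline{a}_2)\mapsto(\overline{a}_1,-\overline{a}_2)$, rather than the first, and it leaves implicit the check, which you spell out, that $q_2$ is well defined modulo $2\bZ$.
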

\begin{proof}
In this case, the map
$
    (\overline{a}_1, \overline{a}_2)
    \longmapsto
    (\overline{a}_1, -\overline{a}_2)
$
provides an anti-automorphism.
\end{proof}

\begin{lem}
\label{lem:p=2_V-type}
Assume that $A_2\cong(\bZ/2^{k}\bZ)\oplus(\bZ/2^{k}\bZ)$ for some integer $k$ and there exists an odd integer $\theta$ such that
$$
    q_2(\overline{a}_1, \overline{a}_2)
    = \frac{\theta(a_1^2 + a_1a_2 + a_2^2)}{2^{k-1}}
    \qquad\text{for each}\qquad
    (\overline{a}_1, \overline{a}_2)\in A_2.
$$
Then $(A_2, q_2)$ admits an anti-automorphism.
\end{lem}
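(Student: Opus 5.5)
The plan is to write down an explicit anti-automorphism, following the pattern of Lemma~\ref{lem:p=2_U-type}. The form $a_1^2+a_1a_2+a_2^2$ is the norm form of the ring of Eisenstein integers. Its $2$-adic completion is the unramified quadratic extension of $\bZ_2$. So multiplication by an element of norm $\equiv -1$ should negate $q_2$.

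First I will reduce the condition on $f$. Write $Q(a_1,a_2)=a_1^2+a_1a_2+a_2^2$. Then $q_2=\theta Q/2^{k-1}$ modulo $2\bZ$, and $\theta$ is odd, hence invertible modulo $2^k$. It therefore suffices to find a group automorphism $f$ of $(\bZ/2^k\bZ)^{\oplus 2}$ such that
\[
Q(f(a_1,a_2))\equiv -Q(a_1,a_2)\pmod{2^k}
\]
for all integers $a_1,a_2$. The bilinear form on $A_2$ is the polarization of $q_2$, so it is negated automatically once $q_2$ is.

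Next I will construct the multiplier. Identify $\bZ^2$ with $\bZ[\omega]$, where $\omega^2+\omega+1=0$, via $(a_1,a_2)\mapsto a_1-a_2\omega$. A direct expansion gives $N(a_1-a_2\omega)=a_1^2+a_1a_2+a_2^2=Q(a_1,a_2)$. It remains to find $\alpha=x-y\omega$ with $N(\alpha)\equiv -1\pmod{2^k}$.

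I will take $x=1$. The condition becomes
\[
1+y+y^2\equiv -1 \pmod{2^k},\qquad\text{that is,}\qquad y^2+y+2\equiv 0\pmod{2^k}.
\]
Modulo $2$ this reads $y^2+y\equiv 0$, which has the solution $y=0$. The derivative is $2y+1$, which is odd for every $y$. So ordinary Hensel lifting applies and produces a solution modulo every $2^k$. Equivalently, the discriminant $-7\equiv 1\pmod 8$ is a $2$-adic square. This Hensel step is the only non-formal point, and the oddness of the derivative is what makes it go through; the remaining steps are routine checks.

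Finally I will define $f$ and verify it. Let $f$ be multiplication by $\alpha$, written in the basis $1,\omega$ using $\omega^2=-1-\omega$. This is an integer $2\times 2$ matrix and hence descends to $(\bZ/2^k\bZ)^{\oplus2}$. Multiplicativity of the norm gives $Q(f(v))=N(\alpha)Q(v)\equiv -Q(v)\pmod{2^k}$, so $q_2\circ f=-q_2$. The determinant of multiplication by $\alpha$ is $N(\alpha)$, which is odd. Hence $f$ is invertible modulo $2^k$ and is an automorphism of $A_2$, as required.
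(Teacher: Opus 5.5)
Your proof is correct, and it takes a different route from the paper's.

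\textbf{The paper's route.} The paper works directly with the matrix entries $u_{ij}$ of $f$. It writes the condition $q_2\circ f=-q_2$ as three congruences modulo $2^k$: one for each of $f(\overline{1},\overline{0})$ and $f(\overline{0},\overline{1})$, and one for the cross term. A parity count shows that any solution is automatically invertible. A solution is then built by induction on $k$: the base case is the identity at $k=1$, and at each step $u_{11},u_{12},u_{21}$ are corrected by multiples of $2^k$ while $u_{22}$ is held fixed.

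\textbf{Your route.} You use multiplicativity of the Eisenstein norm to collapse those three coupled congruences into the single scalar condition $N(\alpha)\equiv -1 \pmod{2^k}$. That reduces to $y^2+y+2\equiv 0 \pmod{2^k}$, which you solve by Hensel's lemma, the odd derivative $2y+1$ doing the work. Invertibility comes for free, since $\det f = N(\alpha)$ is odd.

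\textbf{What each buys.} Your argument is shorter and explains why the lemma holds. $\bZ_2[\omega]$ is the unramified quadratic extension of $\bZ_2$, in which $-1$ is a norm. A single $2$-adic element $\alpha = 1-y\omega$ therefore gives a compatible anti-automorphism for every $k$ at once. It also places Lemma~\ref{lem:p=2_U-type} in the same frame: $a_1a_2$ is the norm form of $\bZ\times\bZ$, and $(a_1,a_2)\mapsto(a_1,-a_2)$ is multiplication by $(1,-1)$, an element of norm $-1$. The paper's argument, by contrast, follows the same template of congruences and $2$-adic lifting used for the diagonal cases (Lemmas~\ref{lem:p=2_diag_k2=k1} and~\ref{lem:p=2_diag}). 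It needs no algebraic number theory, and as a by-product it records the parity pattern that every anti-automorphism of this form must have.
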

\begin{proof}
Consider a group homomorphism $f\colon A_2\to A_2$ with
$$
    f(\overline{1}, \overline{0}) = (\overline{u}_{11}, \overline{u}_{12}), \qquad
    f(\overline{0}, \overline{1}) = (\overline{u}_{21}, \overline{u}_{22}).
$$
If $f$ is an anti-automorphism, then necessarily $-q_2 = q_2\circ f$, and a straightforward computation shows that this identity holds if and only if
\begin{enumerate}[label=(\roman*)]
    \item\label{A2_V-type_(1,0)}
    $2^k\mid u_{11}^2 + u_{11}u_{12} + u_{12}^2 + 1$,
    \item\label{A2_V-type_(0,1)}
    $2^k\mid u_{21}^2 + u_{21}u_{22} + u_{22}^2 + 1$,
    \item\label{A2_V-type_(1,1)}
    $
        2^k\mid 2(u_{11}u_{21} + u_{12}u_{22})
            + (u_{11}u_{22} + u_{12}u_{21}) + 1.
    $
\end{enumerate}

Condition~\ref{A2_V-type_(1,0)} implies that exactly one of $u_{11}$ and $u_{12}$ is odd. Similarly, Condition~\ref{A2_V-type_(0,1)} implies exactly one of $u_{21}$ and $u_{22}$ is odd. Moreover, Condition~\ref{A2_V-type_(1,1)} implies $u_{11}u_{22} + u_{12}u_{21}$ is odd. Consequently, one of the following two cases must occur:
\begin{itemize}
    \item $u_{11}$, $u_{22}$ are odd while $u_{12}$, $u_{21}$ are even, or
    \item $u_{12}$, $u_{21}$ are odd while $u_{11}$, $u_{22}$ are odd.
\end{itemize}
In either case,
$
    2\nmid u_{11}u_{22} - u_{12}u_{21},
$
and hence $f$ is an automorphism.

It remains to show there exist odd integers $u_{11}$, $u_{22}$ and even integers $u_{12}$, $u_{21}$ satisfying Conditions~\ref{A2_V-type_(1,0)}--\ref{A2_V-type_(1,1)}. We proceed by induction.

\medskip
\noindent\textbf{Base case $\boldsymbol{k = 1}$.}
Taking $u_{11} = u_{22} = 1$ and $u_{12} = u_{21} = 0$ settles this case.

\medskip
\noindent\textbf{Inductive step.}
Assume that there exist odd $u_{11}$, $u_{22}$ and even $u_{12}$, $u_{21}$ satisfying Conditions~\ref{A2_V-type_(1,0)}--\ref{A2_V-type_(1,1)} for some $k\geq 1$. Write
$$
    u_{11}' = u_{11} + 2^ka, \qquad
    u_{12}' = u_{12} + 2^kb, \qquad
    u_{21}' = u_{21} + 2^kc, \qquad
    u_{22}' = u_{22},
$$
where $a$, $b$, $c$ are integers to be determined.
\begin{itemize}
\item For Condition~\ref{A2_V-type_(1,0)}, we have
$$
    u_{11}'^2 + u_{11}'u_{12}' + u_{12}'^2 + 1
    = (u_{11}^2+u_{11}u_{12}+u_{12}^2 + 1)
        + 2^k(u_{11}b + u_{12}a)
        + 2^{k+1}(\,\dots).
$$
By the induction hypothesis, the first term vanishes modulo $2^{k}$, so the condition
$$
    2^{k+1}\mid u_{11}'^2 + u_{11}'u_{12}' + u_{12}'^2 + 1
$$
holds if for any $\gamma_1\in\bZ$,
$$
    u_{11}b + u_{12}a
    \equiv b
    \equiv \gamma_1\pmod{2}.
$$
Thus we can take $b = \gamma_1$.

\item Similarly, Condition~\ref{A2_V-type_(0,1)} holds if for any $\gamma_2\in\bZ$,
$$
    cu_{22} \equiv c\equiv \gamma_2\pmod{2}.
$$
Thus we can take $c = \gamma_2$.

\item For Condition~\ref{A2_V-type_(1,1)}, we have
\begin{align*}
    &2(u_{11}'u_{21}' + u_{12}'u_{22}')
        + (u_{11}'u_{22}' + u_{12}'u_{21}') + 1 \\
    &= (
            2(u_{11}u_{21} + u_{12}u_{22})
            + (u_{11}u_{22} + u_{12}u_{21}) + 1
        ) + 2^{k}(
            u_{22}a + u_{21}b + u_{12}c
        ) + 2^{k+1}(\,\dots).
\end{align*}
Hence, for any $\gamma_3\in\bZ$, we need $a$, $b$, $c$ to satisfy
$$
    u_{22}a + u_{21}b + u_{12}c
    \equiv a
    \equiv \gamma_3\pmod{2}.
$$
Thus we can take $a = \gamma_3$.
\end{itemize}
This completes the proof.
\end{proof}

\begin{proof}[Proof of Theorem~\ref{thm:self-mirror}]
The theorem follows from Lemmas~\ref{lem:p-odd_cyclic}, \ref{lem:p-odd_diag}, which characterize the existence of anti-automorphisms of $(A_p,q_p)$ for odd $p$, and Lemmas~\ref{lem:p=2_cyclic}, \ref{lem:p=2_diag}, \ref{lem:p=2_U-type}, \ref{lem:p=2_V-type}, which treat the case $p=2$.
\end{proof}

\subsection{Examples of self-mirror abelian surfaces}

In what follows, we present examples of self-mirror abelian surfaces and give a criterion for when a principally polarized abelian surface is self-mirror.

\begin{eg}
\label{eg:product_AbS}
Consider a product abelian surface $X = E_1\times E_2$, where $E_1$ and $E_2$ are non-isogenous elliptic curves. Then $\NS(X)\cong U$, which is generated by the classes of the fibers $E_1\times\{0\}$ and $\{0\}\times E_2$. This abelian surface is self-mirror, since $U$ admits a primitive embedding into $U^{\oplus2}$ with orthogonal complement also isomorphic to $U$. The stringy K\"ahler moduli space $\cM_\Kah(X)$ in this setting, along with its associated Weil--Petersson metric, was studied in \cite{FKY}*{Section~4.2}.
\end{eg}

Before introducing further examples, we provide numerical criteria for an abelian surface to be \emph{simple}, that is, not
isogenous to a product of elliptic curves.

\begin{lem}
\label{lem:crit_simple-AbS}
An abelian surface $X$ is simple if and only if there exists no nonzero divisor class $D\in\NS(X)$ with $D^2 = 0$. In the case of Picard number~$2$, the surface $X$ is simple if and only if $|\mathrm{disc}\NS(X)|$ is not a perfect square.
\end{lem}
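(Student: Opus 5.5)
The plan is to prove the first statement by splitting into the two directions. Then the Picard number~$2$ refinement follows from the elementary theory of binary quadratic forms.

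\emph{Non-simple implies an isotropic class.} Suppose $X$ is not simple. Then there is an elliptic curve $E\subseteq X$, or more generally a surjection $X\to E$ onto an elliptic curve. The class $D$ of $E$, or of a fiber of $X\to E$, is nonzero in $\NS(X)$, and it satisfies $D^2=0$ because fibers of a fibration are disjoint, equivalently $E\cdot E=0$ by adjunction on an abelian surface.

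\emph{An isotropic class implies non-simple.} Suppose $0\neq D\in\NS(X)$ with $D^2=0$. Since $\NS(X)$ has signature $(1,\rho-1)$ and ample classes are exactly the positive cone (there are no $(-2)$-curves), $D$ lies on the boundary of the positive cone up to sign. Replacing $D$ by $-D$, we may therefore assume $D$ is nef with $D^2=0$. We may also take $D$ primitive and write $D=c_1(L)$. By Riemann--Roch on an abelian surface, $\chi(L)=D^2/2=0$. The aim is to show $L$ is not ``nondegenerate''. Mumford's theory gives that for a line bundle $L$ on an abelian variety, $K(L)=\ker(\phi_L)$ is finite iff $\chi(L)\neq 0$. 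Hence $K(L)$ has positive dimension, and its identity component $K(L)^0$ is a positive-dimensional abelian subvariety. Since $L\neq 0$ is nef and nonzero, $\phi_L\neq 0$, so $K(L)^0$ is a proper subvariety. It is therefore an elliptic curve, and $X$ is not simple by Poincaré reducibility. I expect this to be the main step, but it is standard (Birkenhake--Lange); I would cite it directly rather than reprove it.

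\emph{The Picard number~$2$ case.} Here $\NS(X)$ is an even binary form $ax^2+2bxy+cy^2$ with discriminant $d=ac-b^2<0$. It represents zero nontrivially over $\bZ$ (equivalently over $\bQ$) iff $-d=b^2-ac$ is a rational square, hence an integer square. This is seen by completing the square: $a(ax^2+2bxy+cy^2)=(ax+by)^2-(b^2-ac)y^2$. The degenerate case $a=0$ already gives an isotropic vector and $-d=b^2$ is then a square, so it is consistent. Since $|\mathrm{disc}\,\NS(X)|=b^2-ac$, combining this with the first part gives the claim.
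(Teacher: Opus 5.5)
Your proof is correct and has the same overall structure as the paper's. The difference is that you prove the key input where the paper cites it.

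For the first statement, the paper invokes Kani's Proposition~2.3 (a nonzero $D\in\NS(X)$ with $D^2=0$ exists iff $X$ contains an elliptic curve) and then applies Poincar\'e reducibility. You establish that equivalence yourself. One direction is adjunction. For the other, $\chi(L)=D^2/2=0$ gives $\deg\phi_L=\chi(L)^2=0$, so $K(L)^0$ is a positive-dimensional abelian subvariety, and it is proper since $c_1(L)\neq 0$. This makes the lemma self-contained modulo Mumford's Riemann--Roch, and it exhibits the elliptic curve explicitly as $K(L)^0$. The citation is shorter, and Kani's statement is more refined.

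Two small points in your argument:
\begin{itemize}
\item Your reduction to $D$ nef, and the appeal to the absence of $(-2)$-curves, is never used. The $K(L)$ argument needs only $D\neq 0$ and $D^2=0$.
\item $\phi_L\neq 0$ holds because $\phi_L$ vanishes exactly when $L\in\mathrm{Pic}^0(X)$, i.e.\ by injectivity of $c_1(L)\mapsto\phi_L$ on $\NS(X)$. Nefness plays no role here.
\end{itemize}

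For Picard number~$2$, the paper uses two separate arguments. One direction extends a primitive isotropic class to a basis, giving Gram matrix $\begin{pmatrix}0&b\\ b&2c\end{pmatrix}$. The other writes down the isotropic vector $(n-b)e_1+2ae_2$. Your identity $a\,Q(x,y)=(ax+by)^2-(b^2-ac)y^2$ handles both directions at once. It also treats the case $a=0$ explicitly, where the paper's vector can degenerate to $0$.
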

\begin{proof}
An abelian surface $X$ admits a nonzero $D\in\NS(X)$ with $D^2 = 0$ if and only if there exists an elliptic curve $E\subseteq X$ \cite{Kani94}*{Proposition~2.3}. In this case, Poincar\'e's Reducibility Theorem asserts that $X$ is isogenous to $E\times F$, where $F\subseteq X$ is the \emph{complementary elliptic curve} of $E$ \cite{BLComplexAbelVar}*{Theorem~5.3.5 and p.~125}. This proves the first statement.

Now, assume that $X$ has Picard number~$2$. If $X$ is simple, then there exists a nonzero and primitive class $D\in\NS(X)$ with $D^2 = 0$. We can extend $D$ to a basis of $\NS(X)$, with Gram matrix
$$\begin{pmatrix}
    0 & b \\
    b & 2c
\end{pmatrix}$$
for some integers $b, c$. This shows that $|\mathrm{disc}\NS(X)| = b^2$. To prove the converse, consider the Gram matrix of $\NS(X)$ with respect to a basis $\{e_1, e_2\}$:
$$\begin{pmatrix}
    2a & b \\
    b & 2c
\end{pmatrix}.$$
If $|\mathrm{disc}\NS(X)|$ is a square, then $b^2 - 4ac = n^2$ for some integer~$n$. In this setting, the class
$$
    D = (n - b)e_1 + 2ae_2
$$
satisfies $D^2 = 0$. This proves the second statement.
\end{proof}

\begin{eg}
A slight generalization of Example~\ref{eg:product_AbS} is given by abelian surfaces $X$ with
$$
    \NS(X)\cong U(n)
    = \begin{pmatrix}
        0 & n \\
        n & 0
    \end{pmatrix}.
$$
By Lemma~\ref{lem:crit_simple-AbS}, any such abelian surface is non-simple. Moreover, $X$ is self-mirror, since its N\'eron--Severi lattice has discriminant group
$$
    A_{\NS(X)}\cong(\bZ/n\bZ)\oplus(\bZ/n\bZ)
$$
with discriminant form
$$
    q(\overline{a}_1, \overline{a}_2)
    = \frac{2a_1a_2}{n},
$$
which admits an anti-automorphism given by
$
    (\overline{a}_1, \overline{a}_2)
    \mapsto
    (\overline{a}_1, -\overline{a}_2).
$
\end{eg}

So far, we have seen examples of non-simple abelian surfaces that are self-mirror. In general, simplicity and the self-mirror property are independent, as illustrated by the following examples.

\begin{eg}
Let $X$ be an abelian surface with
$$
    \NS(X)\cong\begin{pmatrix}
        0 & 3 \\
        3 & 2
    \end{pmatrix}.
$$
Then $X$ is non-simple by Lemma~\ref{lem:crit_simple-AbS}. The discriminant group of the N\'eron--Severi lattice is
$$
    A_{\NS(X)}\cong\bZ/9\bZ,
$$
which is cyclic. Hence $X$ is not self-mirror by Theorem~\ref{thm:self-mirror}~\ref{self-mirror:odd-p}.
\end{eg}

\begin{eg}
Let $X$ be an abelian surface with 
$$
    \NS(X)\cong\begin{pmatrix}
        2 & 3 \\
        3 & 2
    \end{pmatrix}.
$$
Since $|\text{disc}\NS(X)| = 5$ is not a square, the surface $X$ is simple by Lemma~\ref{lem:crit_simple-AbS}. The discriminant group of the N\'eron--Severi lattice is
$$
    A_{\NS(X)}\cong\bZ/5\bZ.
$$
In this case, $X$ is self-mirror by Theorem~\ref{thm:self-mirror}.
\end{eg}

For a principally polarized abelian surface $X$ of Picard number~$2$, the self-mirror criterion of Theorem~\ref{thm:self-mirror} reduce to a numerical condition on $|\text{disc}\NS(X)|$. More specifically, we prove that $X$ is self-mirror if and only if the discriminant of $\NS(X)$ is divisible by neither $16$ nor any prime $p\equiv3 \pmod{4}$.

\begin{proof}[Proof of Corollary~\ref{cor:self-mirror_prinAbS}]
Recall that an abelian surface admits a principal polarization if and only if there exists a class $\Theta\in\NS(X)$ with $\Theta^2 = 2$. The N\'eron--Severi lattice is therefore isomorphic to either
$$
    \begin{pmatrix}
        2 & 1 \\
        1 & -2n
    \end{pmatrix}
    \qquad\text{or}\qquad
    \begin{pmatrix}
        2 & 0 \\
        0 & -2n
    \end{pmatrix}
$$
for some integer~$n$. In the former case, the N\'eron-Severi lattice has discriminant group
$$
    A_{\NS(X)}\cong\bZ/(4n+1)\bZ,
$$
which is cyclic. By Theorem~\ref{thm:self-mirror}, the surface $X$ is self-mirror if and only if every prime $p$ dividing $4n + 1$ satisfies $p\equiv 1\pmod{4}$.

In the latter case, the N\'eron-Severi lattice has discriminant form
$$
    A\colonequals A_{\NS(X)}
    \cong(\bZ/2\bZ)\oplus(\bZ/2n\bZ), \qquad
    q(\overline{a}_1, \overline{a}_2)
    = \frac{a_1^2}{2} - \frac{a_2^2}{2n}.
$$
By Theorem~\ref{thm:self-mirror}, the surface $X$ is self-mirror if and only if the local components $(A_p, q_p)$ satisfy the following conditions:
\begin{itemize}
\item Suppose $p$ is odd. Then $A_p$ is cyclic, which implies $p\equiv 1\pmod{4}$. Thus $|\mathrm{disc}\NS(X)|$ has no prime factor $p\equiv 3\pmod{4}$.

\item Suppose $p = 2$ and write $2n = 2^ku$ for some odd $u$. Then
$$
    A_2\cong(\bZ/2\bZ)\oplus(\bZ/2^k\bZ), \qquad
    q_2(\overline{a}_1, \overline{a}_2)
    = \frac{a_1^2}{2} - \frac{ua_2^2}{2^k}.
$$
where $k\leq 2$ and $(1 - u)\equiv 0\pmod{4}$. Note that $k \leq 2$ is equivalent to $|\mathrm{disc}\NS(X)|$ not divisible by $16$. The relation $(1 - u)\equiv 0\pmod{4}$ holds once $|\mathrm{disc}\NS(X)|$ has no prime factor $p\equiv 3\pmod{4}$.
\end{itemize}
This completes the proof.
\end{proof}

\bigskip
\bibliography{MirrorAbS}
\bibliographystyle{alpha}

\ContactInfo
\end{document}